\def\urls#1{{\footnotesize\url{#1}}}
\def\mindex#1{\index{#1}}
\def\ocp{*}   % for optimal parameter or function approximation
\def\Inv{\Uppi}  % looking for a good symbol
\def\flow{\phi}  % looking for a good symbol
\def\sstate{\textup{\textsf{S}}}
\def\bivstate{\textup{\textsf{Y}}}
\DeclareFontFamily{U}{mathx}{\hyphenchar\font45}
\DeclareFontShape{U}{mathx}{m}{n}{<-> mathx10}{}
\DeclareSymbolFont{mathx}{U}{mathx}{m}{n}
\DeclareMathAccent{\widebar}{0}{mathx}{"73}
\def\barUpupsilon{\widebar{\Upupsilon}}
\def\tilnabla{{\widetilde{\nabla}\!}}
\newcommand{\qsaprobe}{{\scalebox{1.1}{$\upxi$}}}  %\upzeta \textphi
\newcommand{\bfqsaprobe}{{\scalebox{1.1}{$\bm{\upxi}$}}}  
\def\Lip{L}  % Lipschitz constant.   
\def\Obj{\Upgamma}  %      No good: \Uplambda , \Upomega , L
\def\ODEstate{\Uptheta} %\xi
\def\bfODEstate{\bm{\Uptheta}}
\def\barODEstate{\widebar{\Uptheta}}
\def\ODEstatePR{\ODEstate^{\text{\tiny\sf PR}}}
\def\odestate{\upvartheta}
\def\bfodestate{\bm{\upvartheta}}
\newcommand{\bbblot}{\raise1pt\hbox{\vrule height .4ex width .4ex depth .05ex}}
\long\def\defbox#1{\framebox[.9\hsize][c]{\parbox{.85\hsize}{%
\parindent=0pt
\baselineskip=12pt plus .1pt      % STYLE 
\parskip=6pt plus 1.5pt minus 1pt % CHANGES
 #1}}}
\long\def\beginbox#1\endbox{\subsection*{}%
\hbox{\hspace{.05\hsize}\defbox{\medskip#1\bigskip}}%
\subsection*{}}
\def\endbox{}
 \def\archival#1{} %Notes I'd like to save
\def\FRAC#1#2#3{\genfrac{}{}{}{#1}{#2}{#3}}
\def\ddt{{\mathchoice{\FRAC{1}{d}{dt}}%
{\FRAC{1}{d}{dt}}%
{\FRAC{3}{d}{dt}}%
{\FRAC{3}{d}{dt}}}}
\def\ddr{{\mathchoice{\FRAC{1}{d}{dr}}%
{\FRAC{1}{d}{dr}}%
{\FRAC{3}{d}{dr}}%
{\FRAC{3}{d}{dr}}}}
\def\ddtp{{\mathchoice{\FRAC{1}{d^{\hbox to 2pt{\rm\tiny +\hss}}}{dt}}%
{\FRAC{1}{d^{\hbox to 2pt{\rm\tiny +\hss}}}{dt}}%
{\FRAC{3}{d^{\hbox to 2pt{\rm\tiny +\hss}}}{dt}}%
{\FRAC{3}{d^{\hbox to 2pt{\rm\tiny +\hss}}}{dt}}}}
\def\ddyp{{\mathchoice{\FRAC{1}{d^{\hbox to 2pt{\rm\tiny +\hss}}}{dy}}%
{\FRAC{1}{d^{\hbox to 2pt{\rm\tiny +\hss}}}{dy}}%
{\FRAC{3}{d^{\hbox to 2pt{\rm\tiny +\hss}}}{dy}}%
{\FRAC{3}{d^{\hbox to 2pt{\rm\tiny +\hss}}}{dy}}}}
\def\half{{\mathchoice{\FRAC{1}{1}{2}}%
{\FRAC{1}{1}{2}}%
{\FRAC{3}{1}{2}}%
{\FRAC{3}{1}{2}}}}
\def\eqdist{\buildrel{\rm dist}\over =}
\def\tr{{\rm tr\, }}
\def\limsup{\mathop{\rm lim{\,}sup}}
\def\state{{\sf X}}
\def\bfmath#1{{\mathchoice{\mbox{\boldmath$#1$}}%
{\mbox{\boldmath$#1$}}%
{\mbox{\boldmath$\scriptstyle#1$}}%
{\mbox{\boldmath$\scriptscriptstyle#1$}}}}
\def\bfPhi{\bfmath{\Phi}}
\def\bfPsi{\bfmath{\Psi}}
\def\bftheta{\bfmath{\theta}}
\def\bfmu{\bfmath{u}}
\def\bfmY{\bfmath{Y}}
\def\bfmhhaY{\bfmath{\hhaY}} %\widehat{\widehat{Y}}}}
\def\bfmhhaY{\hbox to 0pt{$\widehat{\bfmY}$\hss}\widehat{\phantom{\raise 1.25pt\hbox{$\bfmY$}}}}
\def\bfmZ{\bfmath{Z}}
\def\haSigma{{\widehat{\Sigma}}}
\def\haf{{\hat f}}
\def\hag{{\hat g}}
\def\hah{{\hat h}}
\def\haA{\widehat A}
\def\tiltheta{{\tilde \theta}}
\def\tilg{\tilde g}
\def\clB{{\cal B}}
\def\clE{{\cal E}}
\def\clF{{\cal F}}
\def\clL{{\cal L}}
\def\clM{{\cal M}}
\def\clS{{\cal S}}
\def\clU{{\cal U}}
\def\clW{{\cal W}}
\def\clY{{\cal Y}}
\def\clZ{{\cal Z}}
\def\clL{{\cal L}}
\def\eqdef{\mathbin{:=}}
\def\Expect{{\sf E}}
\def\lgmath#1{{\mathchoice{\mbox{\large #1}}%
{\mbox{\large #1}}%
{\mbox{\tiny #1}}%
{\mbox{\tiny #1}}}}
\def\Zero{{\mathchoice{\lgmath{\sf 0}}%
{\mbox{\sf 0}}%
{\mbox{\tiny \sf 0}}%
{\mbox{\tiny \sf 0}}}}
 \def\epsy{\varepsilon}
\def\varble{\,\cdot\,}
\def\formtmp#1#2{{\vskip12pt\noindent\fboxsep=0pt\colorbox{#1}{\vbox{\vskip3pt\hbox to \textwidth{\hskip3pt\vbox{\raggedright\noindent\textbf{#2\vphantom{Qy}}}\hfill}\vspace*{3pt}}}\par\vskip2pt%
\noindent\kern0pt}}
\titleformat\subparagraph[runin]
\titlespacing\subparagraph{0pt}%%             left margin spacing
                       {.1ex minus 0.2ex}%% spacing above the \subparagraph
                       {.75em}%%             horizontal offset from title (horizontal because the `runin` shape was used.)
\def\barf{{\widebar{f}}}
\def\barg{{\widebar{g}}}
\def\barh{{\overline {h}}}
\def\barA{{\bar{A}}}
\def\barY{{\bar{Y}}}
\def\barZ{{\bar{Z}}}
\def\barSigma{\overline{\Sigma}}
\def\ass(#1:#2){(#1\ref{#1:#2})}
\def\ritem#1{
\item[{\sf \ass(\current_model:#1)}]
}
\newenvironment{recall-ass}[1]{% 
\begin{description}
\def\current_model{#1}}{
\end{description}
}
\def\sq{\hbox{\rlap{$\sqcap$}$\sqcup$}}
\def\qed{\ifmmode\sq\else{\unskip\nobreak\hfil
\penalty50\hskip1em\null\nobreak\hfil\sq
\parfillskip=0pt\finalhyphendemerits=0\endgraf}\fi}
\newcommand{\blot}{\vrule height 1.1ex width .9ex depth -.1ex }
\def\qedb{\ifmmode\blot\else{\vspace{-.2cm}\unskip\nobreak\hfil
\penalty50\hskip1em\null\nobreak\hfil\blot
\parfillskip=0pt\finalhyphendemerits=0\endgraf}\fi}
\newtheoremstyle{example}{15pt}{20pt}%
     {}%         Body font
     {}%         Indent amount (empty = no indent, \parindent = para indent)
     {\bfseries}% Thm head font
     {}%        Punctuation after thm head
     {1pt}%{8pt}%     Space after thm head (\newline = linebreak)
     {\thmname{#1}\thmnumber{ #2.}~\thmnote{\textit{\textbf{#3}}}%
     \\[.15cm]\unskip\nobreak}% Thm head spec
\newcounter{rmnum}
\newenvironment{romannum}{\begin{list}{{\upshape (\roman{rmnum})}}{\usecounter{rmnum}
\setlength{\leftmargin}{18pt}
\setlength{\rightmargin}{8pt}
\setlength{\itemindent}{2pt}
}}{\end{list}}
\newcounter{anum}
\newenvironment{alphanum}{\begin{list}{{\upshape (\alph{anum})}}{\usecounter{anum}
\setlength{\leftmargin}{18pt}
\setlength{\rightmargin}{8pt}
\setlength{\itemindent}{2pt}
}}{\end{list}}
\newcommand{\field}[1]{\mathbb{#1}}
\def\Re{\field{R}} 
\def\intgr{\field{Z}}
\def\nat{\field{Z}_+}
\def\Co{\field{C}}
\def\Expect{{\sf E}}
\def\transpose{{\intercal}}
\def\diag{\hbox{\rm diag\thinspace}}
\def\trace{\hbox{\rm trace\,}}  
\def\epsy{\varepsilon}
\def\varble{\,\cdot\,}
\def\haY{\widehat{Y}}
\def\hhaY{\hbox to 0pt{$\haY$\hss}\widehat{\phantom{\raise 1.25pt\hbox{Y}}}}
\def\haA{\widehat A}
\def\haG{{\widehat G}}
\def\haY{\widehat Y}
\def\bfPhi{\bfmath{\Phi}}
\newlength{\dhatheight}
    \newcommand{\doublehat}[1]{%
    \settoheight{\dhatheight}{\ensuremath{\hat{#1}}}%
    \addtolength{\dhatheight}{-0.3ex}%
    \hat{\vphantom{\rule{1pt}{\dhatheight}}%
    \smash{\hat{#1}}}}
\def\hahaf{\doublehat{f}}
\def\scerror{Z}
\def\barUpupsilon{\widebar{\Upupsilon}}
\def\tilXi{\widetilde{\Xi}}
\def\prstate{\Upomega}
\def\tilnabla{{\widetilde{\nabla}\!}}
\def\bdd#1{b^{\text{\rm\tiny\ref{#1}}}}
\def\bdde#1{\varrho^{\text{\rm\tiny\ref{#1}}}}
  \def\ctr{\text{\tiny{\sf ctr}}}
\def\haqsaprobe{\hat{\qsaprobe}}
\def\cqsaprobe{\check{\qsaprobe}}
\def\haclYn{\widehat{\mathcal{Y}}^{\text{\tiny\sf n}}}
\def\Df{D^f}
\def\hacqsaprobe{\widehat{\cqsaprobe}}
\def\rF{\text{\rm F}}
\def\rG{\text{\rm G}}
\def\rH{\text{\rm H}}
\def\rJ{\text{\rm J}}
\def\hahaqsaprobe{\doublehat{\qsaprobe}}
\def\hahaSigma{\widehat{\widehat{\Sigma}}}
\def\bias{\beta}
\def\haUpupsilon{\widehat{\Upupsilon}}
\def\whamrm#1{\smallbreak\pagebreak[3]%
	\noindent{\rm#1}\ \ \gobblepars}
\def\haqsaprobe{\hat{\qsaprobe}}
\def\cqsaprobe{\check{\qsaprobe}}
\def\haqsaprobe{\hat{\qsaprobe}}
\def\tilh{\tilde{h}}
\def\clT{\mathcal{T}}
\def\clYn{\mathcal{Y}^{\text{\footnotesize\sf n}}}
\def\chclYn{\check{\mathcal{Y}}^{\text{\footnotesize\sf n}}}
\def\Sigmaqsa{\Sigma_{\text{\tiny$\qsaprobe$}}}
 \def\tilodestate{\tilde\odestate}
\def\barftwo{{\bar{f}}}
\def\thetaPR{\theta^{\text{\tiny\sf PR}}}
\def\ODEstateF{\ODEstate^{\text{\tiny\sf F}}}
 \def\XF{X^{\text{\tiny\sf F}}}
 \def\WF#1{W^{\text{\tiny{$#1$\sf F}}}}
\def\ODEstatePR{\ODEstate^{\text{\tiny\sf PR}}}
\newcommand\gobblepars{%
    \@ifnextchar\par%
        {\expandafter\gobblepars\@gobble}%
{}}
\def\ODEstatePR{\ODEstate^{\text{\tiny\sf PR}}}
\def\clD{\mathcal{D}}
\def\opt{\text{\tiny\sf opt}}
\newtheorem{theorem}{Theorem}[section]
\newtheorem{lemma}[theorem]{Lemma}
\newtheorem{proposition}[theorem]{Proposition}
\newtheorem{corollary}[theorem]{Corollary}
 \definecolor{programcode}{gray}{0.9}
 \definecolor{lightgray}{gray}{0.7}
\definecolor{MyDarkBlue}{cmyk}{0.5,0.1,0,0.9}
\newlength{\noteWidth}
\long\def\notes#1{\ifinner
{\footnotesize #1}
\else 
\marginpar{\parbox[t]{\noteWidth}{\raggedright\footnotesize#1}}
\fi\typeout{#1}}
 \def\notes#1{\typeout{check notes!!!}}   %  For final version
\def\bl#1{{\color{blue}#1}}
\def\sfb#1{}
\Crefname{corollary}{Corollary}{Corollaries}
\Crefname{eqnarray}{eq.}{eqs.}
\Crefname{equation}{eq.}{eqs.}
\Crefname{figure}{Fig.}{Figs.}
\Crefname{tabular}{Tab.}{Tabs.}
\Crefname{table}{Tab.}{Tabs.}
\Crefname{proposition}{Prop.}{Propositions}
\Crefname{theorem}{Thm.}{Thms.}
\Crefname{definition}{Def.}{Defs.} 
\Crefname{section}{Section}{Sections}
\Crefname{lemma}{Lemma}{Lemmas}
\Crefname{assumption}{Assumption}{Assumptions}
\def\wham#1{\smallbreak\pagebreak[3]%
	\noindent\textbf{#1}\ \ \gobblepars}
\def\witem{\wham{\small$\triangle$}}
 \def\bqed{{\color{blue} \qedb} \bigskip}
\title{Markovian Foundations for Quasi-Stochastic Approximation
\\
with Applications to Extremum Seeking Control}
\author{Caio Kalil Lauand
	\and Sean Meyn% <-this % stops a space 
	\thanks{CKL and SPM are with the University of Florida, Gainesville, FL 32611 		Financial support from ARO award W911NF2010055
		and National Science Foundation award  EPCN 1935389
		is gratefully acknowledged.}%
}
\def\lilb{b^\bullet}
\def\bigb{b^\circ}
\begin{document}
 
\maketitle

 \begin{abstract}

This paper concerns  quasi-stochastic approximation (QSA) to solve root finding problems commonly found in applications to optimization and reinforcement learning.     The general constant gain algorithm may be expressed as the time-inhomogeneous ODE $ \ddt \ODEstate_t = \alpha f_t (\ODEstate_t)$,  with state process $\ODEstate$ evolving on $\Re^d$.    Theory is based on an almost periodic vector field, so that in particular the time average of $f_t(\theta)$ defines the time-homogeneous mean vector field $\barf\colon\Re^d\to\Re^d$ with $\barf(\theta^*) = \Zero$.    Under smoothness assumptions on the functions involved, the following exact representation is obtained:
\[
	\ddt \ODEstate_t    =   \alpha [\barf (\ODEstate_t)   -\alpha \barUpupsilon_t +    \alpha^2 \clW_t^0  +   \alpha \frac{d}{dt}   \clW_t^1 +\frac{d^2}{dt^2}   \clW_t^2]
	%  \sum_{i=0}^2 \alpha^{3-i} \frac{d^i}{dt^i}   \clW_t^i  
\]
along with formulae for the smooth signals $\{ \barUpupsilon_t, \clW_t^i :  i=0, 1, 2\}$.    This representation is based on the application of techniques from Markov processes,  for which Poisson's equation plays a central role.    This new representation, combined with new conditions for ultimate boundedness, has many applications for furthering the theory of QSA and its applications, including the following implications that are developed in this paper:  
\begin{alphanum}
\item 
A proof that the estimation error $\| \ODEstate_t - \theta^* \|$ is of order $O(\alpha)$, but can be reduced to $O(\alpha^2)$  using a second order linear filter.

\item
In application to extremum seeking control  
	[an approach to gradient free optimization], 
it is found that the results do not apply because the standard algorithms are not Lipschitz continuous.   A new approach is presented to ensure that the required Lipschitz bounds hold, and from this we obtain stability, transient bounds,   asymptotic bias of order  $O(\alpha^2)$,
 and asymptotic variance of order  $O(\alpha^4)$.    
\item It is in general possible to obtain better than 
$O(\alpha)$  bounds on error in traditional stochastic approximation when there is Markovian noise.

\end{alphanum}

\noindent
AMS-MSC Codes: 62L20, 34C29
\end{abstract}

\clearpage

\tableofcontents

\clearpage
%%%%%%%%%%% %%%%%%%%%%% %%%%%%%%%%% %%%%%%%%%%% 
\section{Introduction}
\label{s:intro}

The basic problem of interest in this paper is root finding:  find or approximate a solution $\theta^\ocp\in\Re^d$  to $\barf(\theta^\ocp) = 0$,  where  $\barf\colon\Re^d \to\Re^d$ may not be available in closed form, but noisy measurements of $\barf(\theta)$ are available for any $\theta$.      There is a large library of solution techniques.

%%%%%%%%%%% %%%%%%%%%%%  
\subsection{Root finding with noisy observations}

The field of stochastic approximation (SA) was born in the work of Robbins and Monro \cite{robmon51a}. The goal is to solve the root finding problem in which the function
 $\barf\colon\Re^d\to\Re^d$ is expressed as the expectation  
	\begin{equation}
		\barf(\theta) \eqdef \Expect[f(\theta,\qsaprobe)]
		\label{e:barf}
\end{equation}
with $\qsaprobe$ a random vector taking values in $\Re^m$,   and $f\colon\Re^d\times\Re^m\to\Re^d$.

	The basic SA algorithm is expressed as the $d$-dimensional recursion,
\begin{equation}
	\theta_{n+1} = \theta_{n}+ \alpha_{n+1} f(\theta_n,\qsaprobe_{n+1}) \, , \qquad n\ge 0,
	\label{e:SA_recur}
\end{equation}
in which $\{\alpha_n\}$ is the non-negative  step-size sequence, and $\{\qsaprobe_{n+1}\}$ converges to $\qsaprobe$ in distribution as $n \to \infty$.  
A common choice in theory is to opt for a step-size sequence that vanishes in $n$;   a typical choice  is $\alpha_n = n^{-\rho}$ with $\rho \in (1/2,1]$ \cite{CSRL,bor20a}.  

Writing $\tilXi_n =  f(\theta_n,\qsaprobe_{n+1})  -  \barf(\theta_n)$, the SA recursion can be expressed in the suggestive form,
\begin{equation}
	\theta_{n+1} = \theta_{n}+ \alpha_{n+1} \{ \barf(\theta_n) +\tilXi_n  \}\, , \qquad n\ge 0  
	\label{e:SA_recurDelta}
\end{equation}
This is interpreted as a noisy Euler approximation of the \textit{mean flow}:
\begin{equation}
	\ddt \odestate_t  = \barf  ( \odestate_t  )
	\label{e:barf_ODE_gen}
\end{equation}
Convergence   theory for SA is based on the recognition that the identity $\barf(\theta^\ocp) =0$ means that $\theta^\ocp$ is the stationary point for this ODE.   Convergence of the SA recursion is established under the assumption that  	\eqref{e:barf_ODE_gen} is globally asymptotically stable,  along with some regularity conditions on the noise,  and the standard assumptions ensuring success of noiseless Euler approximations \cite{bor20a}.    
Note that in early work it was assumed 
that the sequence $\bfqsaprobe \eqdef \{ \qsaprobe_n  : n\ge 1\}$ is i.i.d.\  (independent and identically distributed).   It is made clear in  \cite{bor20a} that convergence does not require such strong assumptions;  it is only when we turn to rates of convergence that we require strong assumptions and far more elaborate analysis.  

While there have been exciting advances in SA theory over recent decades,    these advances  do not allow us to escape an unfortunate truth:    in the majority of applications of SA, the mean square error decays no faster than  $1/n$---\textit{it would appear that there is  no way to avoid the Central Limit Theorem}.
This curse of variance is true in broad generality. Fortunately  there is a remedy made possible by the flexibility we have in many  application domains, such as optimization and reinforcement learning:  in these applications it is  \textit{we who design the noise} for purposes of ``exploration''.    
Much more efficient algorithms are obtained by abandoning randomness in the algorithm.

It is also convenient to perform analysis in continuous time, so that the recursion \eqref{e:SA_recur} is replaced with an ordinary differential equation.

\wham{Quasi-Stochastic Approximation} 
	is a deterministic analog of stochastic approximation.   The  \textit{QSA ODE} is defined by the ordinary differential equation
\begin{equation}
		\ddt\ODEstate_t = a_t f(\ODEstate_t,\qsaprobe_t) 
\label{e:QSA_ODE_gen}
\end{equation}
where $f\colon\Re^d\times \Re^m \to \Re^d$ is continuous and the gain process $\{a_t\}$ is analogous to $\{\alpha_n\}$.  The $m$-dimensional deterministic continuous time process $\bfqsaprobe$ is called the \textit{probing signal}.  

Subject to mild assumptions,   it was shown in \cite{laumey22,laumey22e} that the algorithm can be designed so that the  squared error decays at rate $1/t^{4\rho}$ for the vanishing gain algorithm, using
\begin{equation}
a_t = \alpha/(1+t/t_e)^\rho  
\label{e:arho}
\end{equation}
with  $\rho \in (\half, 1)$,   and $\alpha>0$,  $t_e\ge 1$ arbitrary constants.  

The present article focuses on the case   $\rho=0$,  meaning the gain is constant:  $a_t \equiv \alpha$.

It is assumed throughout the paper that the  probing signal is a nonlinear function of sinusoids:  
$\qsaprobe_t  = G_0(\qsaprobe _t^0  )$,  with $G_0\colon\Re^K \to\Re^m$ continuous,  and
\begin{equation}
	\qsaprobe _t^0  =  [  \cos (2\pi [\,  \omega_1 t  +  \phi_1 ] ) ;\,  \dots  ;\,    \cos (2\pi [\,  \omega_K t  +  \phi_K ] )  ]
\label{e:qsaprobe0}
\end{equation}
Motivation for a nonlinearity may be to create rich probing signals from simple ones.   For example,  with $K=1$ we can obtain 
		 $\qsaprobe_t  =    [  \cos (2\pi \omega_1 t ) ; \, \cos (4\pi \omega_1 t ) ;\, \dots  ; \,  \cos (2m\pi \omega_1 t   )  ]$ for any $m\ge 1$,  with  $G_0\colon\Re \to\Re^m$ polynomial.    
On choosing $G_0$ linear we obtain, for vectors $\{v^i\}\subset \Re^m$,
\begin{equation}
		\qsaprobe _t  = \sum_{i=1}^K  v^i   \cos (2\pi [  \omega_i t  +  \phi_i  ] )
\label{e:qSGD_probe}
\end{equation}

For analysis it is crucial   to abandon $\qsaprobe _t^0  $ in favor of the   $K$-dimensional clock-process   $\bfPhi$ with entries $\Phi_t^i =   \exp(2\pi j[\omega_i t + \phi_i])$.
On defining 
 $ G(z) = G_0( (z+1/z)/2  )$, where $1/z \eqdef (1/z_1,\dots,1/z_K)$ for $z\in \{\Co\setminus \{0\}\}^K$, we obtain 
\begin{equation}
\qsaprobe_t  = G( \Phi_t) 
\label{e:qsaprobeG}
\end{equation}
The function $G$ will be assumed continuous on the restricted domain $ \{\Co\setminus \{0\}\}^K$.  	

The clock-process   $\bfPhi$ is Markovian.    It can also be expressed as the solution to the linear system,
\begin{equation}
		\ddt \Phi_t = W \Phi_t, \quad \text{with } W =2\pi j \diag(\omega_i) 
		\label{e:LinearMarkovQSA}
\end{equation}
It evolves on a compact subset of Euclidean space denoted $\prstate \subset\Co^K$,  and it is evident that the uniform distribution on $\prstate$ is the unique invariant measure for $\bfPhi$;   this is denoted $ \uppi$.

The \textit{mean vector field} associated with \eqref{e:QSA_ODE_gen} is defined as the expectation,
\begin{equation}
		\barf(\theta) =   \Expect[ f(\theta, G(\Phi))  ]\,,\qquad \theta\in\Re^d\,,  \qquad \textit{in which  $ \Phi\sim \uppi$. 
}
\label{e:barfQSA}
\end{equation}

	The Law of Large Numbers holds:
$\lim_{T \to \infty } \frac{1}{T}    \int^T_0 g(\Phi_t)\, dt  = \barg$  for each initial condition $\Phi_0$, where $\barg \eqdef \int g(z) \uppi(dz)$ \cite[Ch. 8]{kha02}. This is commonly used in this paper with $g(z) = h(G(z))$, so that $g(\Phi_t) = h(\qsaprobe_t)$.

%\notes{
%}

%%%%%%%%%%% %%%%%%%%%%%  
\subsection{Perturbative mean flow}

The central conclusion on which most of the concepts in this paper are based upon is the following representation for the QSA ODE, which holds under smoothness conditions on $f$ and $G_0$.

\wham{Perturbative Mean Flow:}  The  solution to the QSA ODE admits the exact description
 \begin{equation} 
		\begin{aligned}
			\ddt \ODEstate_t   & =  \alpha[ \barf (\ODEstate_t)  
						  -  
						\alpha \barUpupsilon_t
						  +   \clW_t]
			\\
			& \qquad    \clW_t =   \alpha^2 \clW_t^0   +  \alpha \frac{d}{dt} \clW_t^1 +    \frac{d^2}{dt^2} \clW_t^2
		\end{aligned} 
\label{e:BigGlobalODE}
\end{equation}	
The details: 
\begin{romannum}
\item
The deterministic processes  $\{\clW_t^i         : i=0,1,2\}$   have explicit representations,  
	 given in \eqref{e:AllTheNoise0}--\eqref{e:AllTheNoise2},
	 as smooth functions of the larger state process $\bfPsi =  (\bfODEstate,\bfPhi)$.
		
		\item The function $\barUpupsilon_t = \barUpupsilon(\ODEstate_t)  $ appears when there is multiplicative noise in the QSA ODE.    It can contribute significantly to the estimation error $\|\ODEstate_t - \theta^*\|$, resulting in large bias and variance. Fortunately, it can be eliminated with careful design.
		 
\end{romannum}
The implications of the perturbative mean flow (or P-mean flow) representation to algorithm design are focuses of this paper.   

However,  there is one catch:  while the P-mean flow representation holds in broad generality, we cannot use it to establish stability (in the sense of ultimate boundedness) since to-date we have not found global Lipschitz bounds on the functions  
$\{ \barUpupsilon,  \clW^i        : i=0,1,2\}$.

\wham{Algorithm Performance:} The pair $\bfPsi =  (\bfODEstate,\bfPhi)$ is a Feller Markov process.   Existence of an invariant measure $\upvarpi \sim \bfPsi  $ is guaranteed for QSA whenever the sample path $\bfODEstate$ is bounded from at least one initial condition. %It is not known if $\upvarpi$ is unique, so bias and variance are defined in terms of sample path averages
Conditions under which $\upvarpi$ is unique are given in \Cref{s:coupling} so that bias and variance are defined as follows

\begin{equation}
	\begin{aligned}
		\bias_\ODEstate  &= \lim_{T\to\infty} \Bigl\| \frac{1}{T}\int_0^T [\ODEstate_t - \theta^*] \, dt  \Bigr\| \,,
		\\
		\sigma^2_\ODEstate  &=   \Bigl(  \lim_{T\to\infty}    \frac{1}{T}\int_0^T \| \ODEstate_t -\theta^\ocp \|^2 \, dt  \Bigr) 
		-   \|\bias_\ODEstate\|^2  \,.
	\end{aligned} 
	\label{e:biasdef}
\end{equation}

The standard $L_p$ norms are also considered in their sample path forms: 
\begin{equation}
	\begin{aligned}
		\| \ODEstate -\theta^*\|_{L_1} = \lim_{T\to\infty}    \frac{1}{T}\int_0^T \| \ODEstate_t -\theta^*\| \, dt    \, ,
		\\
		\| \ODEstate -\theta^*\|_{L_2} = \sqrt{\lim_{T\to\infty}    \frac{1}{T}\int_0^T \| \ODEstate_t -\theta^*\|^2 \, dt }   \, .
	\end{aligned}
	\label{e:norms}
\end{equation}
The $L_1$ norm is also referred to as the absolute average error (AAD). These quantities are related via 
\begin{equation}
	\| \ODEstate -\theta^*\|_{L_1}   \le  \| \ODEstate -\theta^*\|_{L_2}  = \sqrt{   \sigma^2_\theta  +   \bias_\ODEstate^2  } \, .
	\label{e:cov_from_bias}
\end{equation}
We say that the  AAD is  $O(\eta_\alpha)$ if there is a finite constant $B_0$ such that
 $\| \ODEstate -\theta^*\|_{L_1} \le B_0 \eta_\alpha$   for all $\alpha>0$ in a neighborhood of the origin; $\eta_\alpha$ will be polynomial in $\alpha$.     

We find that the AAD of QSA may be order $O(\alpha)$ even when $\barUpupsilon\equiv 0$.    \Cref{fig:QSArep} illustrates how linear filtering techniques can reduce this AAD to $O(\alpha^2)$.  The concept is simple:  if $\ddt u_t$ is passed through a low pass filter with bandwidth $\gamma>0$,   then the output $y_t$  satisfies $|y_t|   \le \gamma \|\bfmu\|_\infty + o(1)$.
In general we require a second order filter to attenuate the term   $\frac{d^2}{dt^2} \clW_t^2$
 appearing in \eqref{e:BigGlobalODE}.

 We are also interested in the  \textit{target bias},  defined as the sample average
\begin{equation}
	b_\barftwo \eqdef       \lim_{T\to\infty} \frac{1}{T} \int_0^T   \barf (\ODEstate_t)   \, dt
	\label{e:meanTargetBias}
\end{equation}

	\begin{figure}
		\centering
		\includegraphics[width= .5\hsize]{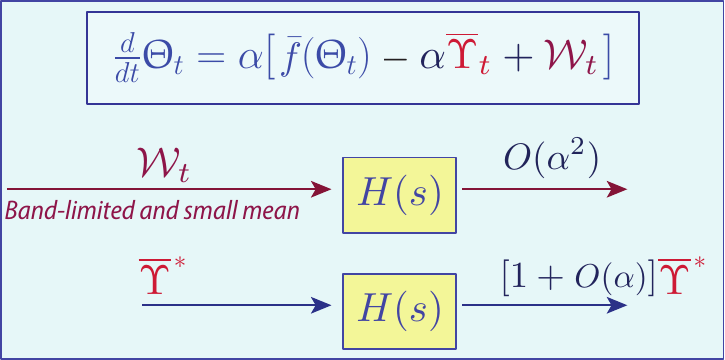}  
		\caption{Perturbative mean flow and its implication to filter design. }
		\label{fig:QSArep}
	\end{figure}

%%%%%%%%%%% %%%%%%%%%%%  
\subsection{Stability theory for QSA}
\label{s:QSAstable}

We opt for the usual control-theoretic definition for the full state process:
We say that $\bfPsi  = (\bfODEstate,\bfPhi)$ is \textit{ultimately bounded} if there is a fixed constant $B<\infty$ satisfying the following:   for each initial condition $\ODEstate_0 =\theta$,  $\Phi_0 = z$,  there is a finite time $t_0 = t_0(\theta,z)$ such that
\begin{equation}
	\|  \ODEstate_t\|   \le B\,, \qquad t\ge t_0
	\label{e:UB}
\end{equation}
Two criteria to establish this property are available based entirely on the mean vector field $\barf$:    
\begin{romannum}
\item[1.]   \textbf{Lipschitz Lyapunov Function:}      For a $C^1$ function $V\colon\Re^d\to\Re_+$ and a constant $\delta_0>0$,
\begin{equation} 
	\frac{d}{dt} V(\odestate_t)  \leq  -  \delta_0    V(\odestate_t)    \,, \qquad   \textit{when $\|\odestate_t\|\ge \delta_0^{-1}$}
	\label{e:ddt_bound_LyapfunTmp}
\end{equation}
This is equivalently expressed $\nabla  V(\theta) \cdot \barf(\theta)  \le  -  \delta_0    V(\theta)$ for all  $\|\theta\|\ge \delta_0^{-1}$.   This implies a similar bound for the QSA ODE under very general conditions.   Most crucial is that $f$,  $\barf$,   and $V$ each satisfy a global Lipschitz bound.   

\item[2.]   \textbf{ODE@$\infty$:}     This is the vector field obtained by scaling 
\begin{equation}
	\barf_\infty (\theta) 
	\eqdef
	\lim_{r \to \infty }   \barf^r (\theta)
	\quad 
	\textit{with}
	\quad \barf^r (\theta)= r^{-1} \barf ( r \theta)  \,,\qquad \theta\in\Re^d 
	\label{e:barfinfty_QSA}
\end{equation} 
If this exists, and if the ODE with this vector field is locally asymptotically stable, then we establish a relaxation of	\eqref{e:ddt_bound_LyapfunTmp} that also implies that $\bfPsi$ is ultimately bounded. 
\end{romannum}

The first criterion might look odd to those accustomed to quadratic Lyapunov functions.     Suppose that $V_1$ is quadratic, of the form $V_1 (\theta) = \theta^\transpose P \theta$ with $P>0$  $(d\times d)$, and solves 
$\nabla  V_1(\theta) \cdot \barf(\theta)  \le  -  \delta_1    V_1(\theta)$ for some $\delta_1>0$ and all $\|\theta\| \ge \delta_1^{-1}$.   It follows from the chain rule that 	\eqref{e:ddt_bound_LyapfunTmp} holds using $V = \sqrt{1+ V_1}$, and $\delta_0 \in (\delta_1/2,1)$.

The second approach is based on the very notion of ultimate boundedness:  if this condition fails to hold, then by definition the state blows up.   Hence we look at the evolution on a compressed spatial scale.   
An example is shown in \Cref{fig:ODEinfty},  with details postponed to \Cref{s:ESC}.

 \begin{wrapfigure}[16]{r}{0.32\textwidth}

 	\centering
	\vspace{-.75em}
	
	\includegraphics[width=0.95\hsize]{./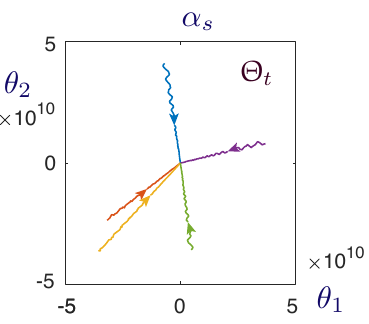}
	\caption{Solutions to \textit{extremum seeking control} for the Rastrigin objective, starting from very large initial conditions}%	
	\label{fig:ODEinfty}
\end{wrapfigure} 

\textit{The ODE@$\infty$ is also a useful technique to establish ultimate boundedness of the mean flow}:   consider a large initial condition,  with magnitude $r = \| \odestate_0\|$.    The scaled state process $\odestate^r_t  = r^{-1} \odestate_t$ solves the scaled ODE,
\begin{equation}
\ddt \odestate^r_t    =  r^{-1} \barf(\odestate_t)     = \barf^r(\odestate_t^r)   \,,\qquad   \|  \odestate^r_0 \|=1
\label{e:ODEinf-r}
\end{equation}
This simple conclusion leads to a proof of ultimate boundedness for the mean flow, following two observations:

\wham{(i)}
 The convergence in	\eqref{e:barfinfty_QSA}
is uniform on compact subsets of $\Re^d$  (an application of Lipschitz continuity).

\wham{(ii)}
 if $\barf_\infty$ is locally asymptotically stable, then it must be globally exponentially asymptotically stable, with the origin the unique stationary point, $ \barf_\infty(\Zero) =\Zero$.

 Based on these conclusions we obtain semi-exponential stability, in the following sense:  there is $b_e<\infty$ and $\delta_e>0$ such that for any initial conditions satisfying $\| \odestate_0\| \ge \delta_e^{-1}$,  solutions to 	\eqref{e:barf_ODE_gen}  satisfy
 \begin{equation}
%V
\| \odestate_t  \|  \le b_e \|\odestate_0\| e^{-\delta_e t} \,,\qquad 0\le t\le  T(\odestate_0) 
\label{e:ExpStableAlmost}
\end{equation}
where $T$ is the first entrance time: $
T(\odestate_0) \eqdef  \min \{  t\ge 0 : \| \odestate_t\|  \le \delta_e^{-1}  \} $.

The bound \eqref{e:ExpStableAlmost} is extended to the QSA ODE in  \Cref{t:ODEstate_is_bdd},  
under either of these two stability criteria.

%%%%%%%%%%% %%%%%%%%%%% %%%%%%%%%%% %%%%%%%%%%% 
\subsection{Bridge building}      
\label{s:ESCetc}

As motivation, to provide examples, 
and to provide a short literature survey,
we present a brief survey of gradient free optimization (GFO) and how it is related to QSA and
extremum seeking control (ESC).   
The definition of ESC is postponed to \Cref{s:ESC}, based on interpretation of its standard block diagram description shown in \Cref{fig:ESC}.

	\begin{figure}[h]
		\begin{center}
			\includegraphics[width=0.8\hsize]{./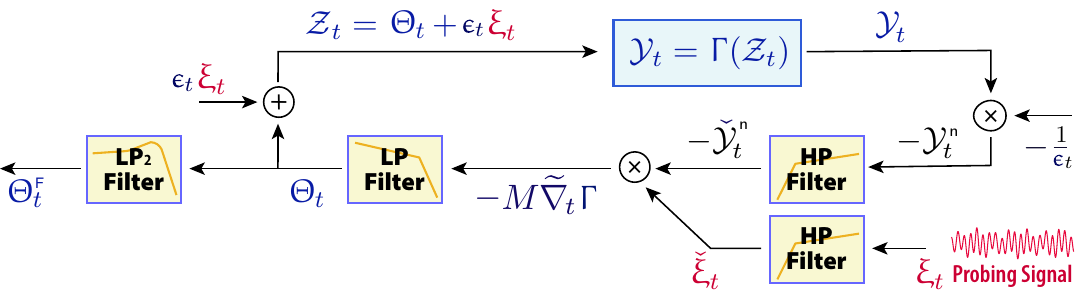}
		\end{center}
		\caption{A Typical Architecture for Extremum Seeking Control for Gradient-Free Optimization}
		\label{fig:ESC}
	\end{figure}

\subsubsection{SPSA}

\begin{subequations}

Within the area of GFO two algorithms of Spall are most easily described and justified.   Each are SA algorithms of the form \eqref{e:SA_recur}, differing only in the definition of $f$:
\begin{align}
%f^{1\text{SPSA}}
\text{\sf 1SPSA:} \quad 
f(\theta,\qsaprobe) &
=
		- \frac{1}{\epsy} 
		\qsaprobe \Obj(\theta + \epsy \qsaprobe)
\label{e:1qSPSA}
		 \\
%f^{2\text{SPSA}}
\text{\sf 2SPSA:} \quad 
f(\theta,\qsaprobe) &=
		-
		\frac{1}{2\epsy}   \qsaprobe\bigl[\Obj(\theta + \epsy \qsaprobe) -\Obj(\theta - \epsy \qsaprobe) \bigr]   
\label{e:2qSPSA}
\end{align}
It is assumed that the probing sequence $\{\qsaprobe_n\}$  is i.i.d., and in much of the theory it is assumed that the entries of $\qsaprobe_n$ have support in $\{-1,1\}$.   
In either case, the mean vector field is defined via \eqref{e:barf}:
\[
 \barf(\theta) \eqdef   \Expect[  f(\theta,\qsaprobe_{n+1})  ] \,,  \qquad \theta\in\Re^d  
\]
This is independent of $n$ under the assumption that the probing sequence is i.i.d..   

\label{e:SPSA_Gen}
\end{subequations}

\begin{subequations}

Motivation for either is usually based on a first-order Taylor series approximation.  The following exact representations follow from   the Fundamental Theorem of Calculus:
\begin{align}
%f^{1\text{SPSA}}
\text{\sf 1SPSA:} \quad 
f(\theta,\qsaprobe) &=
			 		-  \qsaprobe\qsaprobe^\transpose  
						 \frac{1}{\epsy}  \int_{0}^\epsy   \nabla \Obj\,( \theta + t \qsaprobe)  \, dt   
						  -  \frac{1}{\epsy}  \qsaprobe \Obj( \theta)  
\label{e:1qSPSA_FTC}
		 \\
%f^{2\text{SPSA}}
\text{\sf 2SPSA:} \quad 
f(\theta,\qsaprobe) &
		=  -   \qsaprobe\qsaprobe^\transpose  \frac{1}{2\epsy}  \int_{-\epsy}^\epsy   \nabla \Obj\,( \theta + t \qsaprobe)  \, dt  
\label{e:2qSPSA_FTC}
\end{align}
These representations in terms of average gradients provide the first step in the proof of the following:

\label{e:SPSA_Gen_FTC}
\end{subequations}

%\notes{}

  \begin{proposition}
\label[proposition]{t:SPSAbias}
Suppose that the $d$-dimensional sequence $\bfqsaprobe$ is i.i.d.\ with zero mean,  and   that $ \qsaprobe_{n+1} \eqdist - \qsaprobe_{n+1}$  (equality in distribution).   Then the mean vector fields for 1SPSA and 2SPSA coincide.

Suppose in addition that the objective $\Obj$ is continuously differentiable ($C^1$)  and strongly convex in a neighborhood of the optimizer $\theta^\opt$.
Then,   there exists $\epsy^0>0$ such that the function $\barf$ has a  root $\theta^\ocp$ for $0<\epsy\le \epsy^0$, and for either 1SPSA or 2SPSA,  
\begin{equation}
\| \theta^\ocp - \theta^\opt \| = O(\epsy^2)  \qquad \textit{and}\qquad 
\Obj( \theta^\ocp ) -  \Obj(\theta^\opt )  = O(\epsy^4)  
\label{e:SPSAbias}
\end{equation}
\qed
\end{proposition}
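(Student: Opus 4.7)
The plan is to first establish the equality of the mean vector fields by a direct symmetry argument, and then analyze the correction term via a Taylor expansion of the identity \eqref{e:2qSPSA_FTC}.

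For the first claim, write $\barf_1$ and $\barf_2$ for the mean vector fields associated with \eqref{e:1qSPSA} and \eqref{e:2qSPSA}. Since $\Expect[\qsaprobe]=\Zero$, one has $\barf_1(\theta)=-\tfrac{1}{\epsy}\Expect[\qsaprobe\{\Obj(\theta+\epsy\qsaprobe)-\Obj(\theta)\}]$. Using the hypothesis $\qsaprobe\eqdist-\qsaprobe$, the change of variables $\qsaprobe\mapsto-\qsaprobe$ inside the expectation yields
\[
\Expect[\qsaprobe\,\Obj(\theta-\epsy\qsaprobe)] \;=\; -\Expect[\qsaprobe\,\Obj(\theta+\epsy\qsaprobe)],
\]
so that $\barf_2(\theta)=-\tfrac{1}{\epsy}\Expect[\qsaprobe\,\Obj(\theta+\epsy\qsaprobe)]=\barf_1(\theta)$. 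Consequently the two algorithms share the same root set, and it suffices to treat \eqref{e:2qSPSA_FTC}, whose symmetric integration interval is more convenient.

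For the bias estimates, Taylor-expand the integrand in \eqref{e:2qSPSA_FTC} about $\theta$. Using that $\Obj$ is smooth enough near $\theta^\opt$ (at least $C^{4}$, which is implicit in the stated $O(\epsy^{4})$ conclusion), and that all odd powers of $t$ integrate to zero over $[-\epsy,\epsy]$,
\[
\frac{1}{2\epsy}\int_{-\epsy}^{\epsy}\!\nabla\Obj(\theta+t\qsaprobe)\,dt
\;=\;\nabla\Obj(\theta)+\tfrac{\epsy^{2}}{6}\,H_{3}(\theta,\qsaprobe)+O(\epsy^{4}),
\]
where $H_{3}(\theta,\qsaprobe)$ is homogeneous quadratic in $\qsaprobe$ and built from the third derivative of $\Obj$. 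Multiplying by $\qsaprobe\qsaprobe^{\transpose}$, taking expectation, and normalizing so that $\Expect[\qsaprobe\qsaprobe^{\transpose}]=I$ (automatic when the components of $\qsaprobe$ are $\pm 1$), one obtains the expansion
\[
\barf(\theta)\;=\;-\nabla\Obj(\theta)\;-\;\epsy^{2}\,g(\theta)\;+\;O(\epsy^{4}),
\]
with $g$ continuous in $\theta$ and depending only on $\Obj$ and the law of $\qsaprobe$.

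I would then invoke the implicit function theorem at $(\epsy,\theta)=(0,\theta^{\opt})$. At that point $\nabla\Obj(\theta^{\opt})=\Zero$, and the Jacobian $\partial\barf/\partial\theta=-\nabla^{2}\Obj(\theta^{\opt})$ is nonsingular by local strong convexity. This yields some $\epsy^{0}>0$ and a smooth curve $\epsy\mapsto\theta^{\ocp}(\epsy)$ with $\theta^{\ocp}(0)=\theta^{\opt}$ and $\barf(\theta^{\ocp}(\epsy))=\Zero$. Because the expansion of $\barf$ contains only even powers of $\epsy$, implicit differentiation gives $d\theta^{\ocp}/d\epsy\big|_{\epsy=0}=\Zero$; hence $\|\theta^{\ocp}-\theta^{\opt}\|=O(\epsy^{2})$. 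A second-order Taylor expansion of $\Obj$ about $\theta^{\opt}$, combined with $\nabla\Obj(\theta^{\opt})=\Zero$, then gives $\Obj(\theta^{\ocp})-\Obj(\theta^{\opt})=O(\|\theta^{\ocp}-\theta^{\opt}\|^{2})=O(\epsy^{4})$.

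The main obstacle is not technical but a matter of hypothesis bookkeeping: the $C^{1}$ hypothesis literally stated is too weak to support the Taylor remainders underlying the $O(\epsy^{2})$ and $O(\epsy^{4})$ bounds, so one must read the smoothness assumption as $C^{4}$ in a neighborhood of the optimizer. Everything else is routine: a symmetry argument for the coincidence of mean vector fields, an even-in-$\epsy$ Taylor expansion to isolate the leading bias, and one application of the implicit function theorem.
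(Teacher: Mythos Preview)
Your argument is correct and is exactly the kind of proof the paper has in mind: the paper does not give a detailed proof of this proposition, only remarking that the FTC representations \eqref{e:1qSPSA_FTC}--\eqref{e:2qSPSA_FTC} ``provide the first step,'' and your symmetry argument, Taylor expansion, and implicit-function step are the natural completion. Your remark that the literal $C^1$ hypothesis is too weak for the stated $O(\epsy^4)$ bound is also well taken.
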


SPSA is a perfect example of a successful algorithm that can be improved using deterministic exploration.

\subsubsection{ESC and QSA}   
\label{s:ESC_QSA}

Spall's SPSA recursions admit natural analogs as QSA ODEs, with vector fields
\begin{subequations} 
\begin{align}
%f^{1\text{SPSA}}
\text{\sf 1qSGD:} \quad 
f(\ODEstate_t,\qsaprobe_t)  &=
- \frac{1}{\epsy} 
		\qsaprobe_t \Obj(\ODEstate_t + \epsy \qsaprobe_t)
\label{e:1qSGD}
		 \\
%f^{2\text{SPSA}}
\text{\sf 2qSGD:} \quad 
f(\ODEstate_t,\qsaprobe_t) &=
-
		\frac{1}{2\epsy}   \qsaprobe_t\bigl[\Obj(\ODEstate_t + \epsy \qsaprobe_t) -\Obj(\ODEstate_t - \epsy \qsaprobe_t) \bigr]   
\label{e:2qSQD}
\end{align}
in which {\sf qSGD} refers to \textit{quasi-stochastic gradient descent}.   In examples that follow we typically take the sinusoidal probing signal \eqref{e:qSGD_probe}.   

\label{s:qSGD}
\end{subequations} 

In \Cref{s:ESC} we show that 1qSGD is a simple instance of extremum seeking control.

And we also find a challenge:  in both domains,  ESC and 1qSGD,  convergence theory is limited because the vector field $f$ is not Lipschitz continuous in its first variable.   \textit{Lipschitz continuity is a  critical component of theory for both SA and QSA}.   
A remedy is introduced in this paper, through the introduction of a state-depending probing gain.   Details are found in    \Cref{s:SDprobe}.

\subsubsection{Averaging}

The theory of averaging for variance reduction is well-established in the SA literature,  with application to optimization  \cite{frapol20,bacmou11,moujunwaibarjor20} and to  GFO  \cite{spa03}.   The idea is very simple:  given the estimates $\{\theta_n : 0\le n  \le N\}$ from any SA recursion,  obtain a new estimate via  
\begin{equation}
\thetaPR_N \eqdef \frac{1}{N-N_0}  \sum_{n=N_0+1}^N   \theta_n
\label{e:PR}
\end{equation}
where $1\le N_0 <N$ is chosen to discard large transients.  
This apparently simple ``hack'' provides enormous benefit:  under mild conditions, the variance of this estimate of $\theta^\ocp$ is minimal in a strong sense  \cite{bor20a,kusyin97}.   The technique goes by the name of Polyak-Ruppert averaging in honor of the creators of the technique and original analysis  \cite{rup88,pol90,poljud92} (note Polyak's  contributions in \cite{pol90} prior to his collaboration with Juditsky).
The article \cite{dipren97} is an early application of these ideas to accelerate GFO.
%\notes{}

This is similar to the low pass filter shown in \eqref{fig:ESC}, generating  $\ODEstateF_t$ from $\ODEstate_t$.
We will find better options for fixed gain QSA   in \Cref{s:filterDesign}.

 \subsubsection{Multiplicative noise}
 
One objective of this paper is to bring to light the  challenges posed by multiplicative noise.     Many optimistic results in prior work consider additive noise models for which the nuisance term $\Upupsilon$ is not present:
\begin{romannum}
\item
Estimation error bounds of order $O(\alpha^2)$  are obtained in \cite{arikrs02} for  ESC when the objective is quadratic.    
 
\item
There are recent optimistic results in the stochastic approximation literature for  fixed gain algorithms \cite{moujunwaibarjor20,durmounausamscawai21}.   Through design it is possible to obtain zero asymptotic bias and optimize variance (via averaging).
These results are obtained for models with additive white noise.   
 \end{romannum}
 For QSA we obtain  both $O(\alpha^2)$ bounds for AAD and target bias through careful design of filters and the probing signal.  
 Theory shows that we  cannot expect    $O(\alpha^2)$ AAD for general models, unless a filter is  designed following the specifications in \Cref{t:Couple_theta}.
 
In stochastic settings with Markovian noise it is not at all clear how to obtain such strong conclusions---averaging cannot remove the inherent $O(\alpha)$ AAD that is a consequence of statistical memory.  Details are found in  \Cref{s:SAbias}.

  \medskip

\wham{Overview}
The remainder of the body of the paper is organized into five additional sections:     
 The main technical results of the paper are contained in \Cref{s:QSAtheory}, including a proof of the P-mean flow representation \eqref{e:BigGlobalODE}.    
 Implications to extremum seeking control are summarized in \Cref{s:ESC}, 
with examples in \Cref{s:Experiments},  
  and conclusions and directions for future research are summarized in
\Cref{s:conc}. 
A literature survey is contained in \Cref{s:lit}, and proofs of some technical results are contained in the Appendix.

%%%%%%%%%%% %%%%%%%%%%% %%%%%%%%%%% %%%%%%%%%%% 
\section{QSA Theory}
\label{s:QSAtheory}

The theory described in this section is based on the fixed-gain QSA ODE with $a_t\equiv \alpha>0$:    
\begin{equation}
	\ddt\ODEstate_t = \alpha f(\ODEstate_t,\qsaprobe_t)
\label{e:QSAgen_alpha}
\end{equation}
Convergence of $\{\ODEstate_t\}$ to $\theta^*$  cannot be expected;  instead with careful design of the algorithm we obtain bounds on asymptotic bias of order $O(\alpha^2)$ and variance of order $O(\alpha^4)$.

%%%%%%%%%%% %%%%%%%%%%%  
\subsection{Markovian framework for analysis}
\label{s:markov}

	In the theory of SA the  stochastic sequence $\{\qsaprobe_{n+1}\}$ appearing in \eqref{e:SA_recur} is not always assumed to be i.i.d..
Sharp results on asymptotic variance are available when it can be expressed as a function of a well behaved Markov chain.  
It may come as a big surprise to learn that the techniques extend to QSA analysis, and the application of techniques rooted in Markov chain theory are the only tools available to obtain \eqref{e:BigGlobalODE} and its progeny. 

		The pair $\bfPsi =  (\bfODEstate,\bfPhi)$ is the state process for a time-homogeneous dynamical system, 
	which may be expressed as a continuous function of its initial condition.    
	There is a mapping   $\flow \colon \Re\times \prstate\to\bivstate$ such that
	\begin{equation}
		\Psi_{t+s} = \flow_s(\ODEstate_t,\Phi_t)\,,  \qquad   s,t\in\Re_+\,, \  \Psi_t = (\ODEstate_t,\Phi_t)\in \bivstate =  \Re^d \times \prstate
		\label{e:flow}
	\end{equation}
	The process $\bfPsi$ can also be regarded as a Feller Markov process on   $\bivstate$. 
	Provided  $\bfPsi $  is ultimately bounded,   it admits at least one invariant measure \cite[Thm.~12.1.2]{MT}, which we denote $\upvarpi$.
	We  present general conditions under which $\upvarpi$ is unique, and in this case denote $\barg \eqdef \int g(q) \upvarpi(q)$ for any continuous function $g:\bivstate \to \Re$.

The following simple result will aid in interpretation of other expectations.   
For any function $h\colon\Uppi\to\Re$ that is continuously differentiable, define the continuous function $\clD h$ via
\begin{equation}
 \clD h\, (\theta,z)  \eqdef  \partial_\theta h\,  (\theta,z )   \cdot f (\theta, G(z))    
	+   \partial_z   h \, (\theta, z) \cdot W  z    \,,\qquad (\theta, z) \in\Uppi  
\label{e:diffGen}
\end{equation}
In Markov terminology, the  functional $\clD$ is known as the \textit{differential generator} for $\bfPsi$.

\begin{proposition}
\label[proposition]{t:QSAgenerator}
Let $h\colon\Uppi\to\Re$ be a $C^1$ function, and denote $g=\clD h$.  The following then hold: 
\begin{romannum}
\item   
$ g(\Psi_t)  =\ddt h(\Psi_t) $.

\item
Suppose that for some initial condition $\Psi_0$, the resulting trajectory $\{ \Psi_t : t\ge 0\}$ is uniformly bounded.   Then,
\[
\lim_{T\to\infty}   \frac{1}{T}   \int_0^T  g(\Psi_t)  \, dt   = \Zero
\]

\item
Suppose that an invariant measure  $\upvarpi$ exists with compact support.
Then $\Expect[g(\Psi)]=0$  when $(\ODEstate,\Phi) = \Psi\sim \upvarpi$.   
In particular,  on choosing $h(\theta,z) \equiv \theta$,
\begin{equation}
 			\Expect[f(\ODEstate, G(\Phi))]= \Zero
 \label{e:fMeanZero}
\end{equation}
\qed
\end{romannum}
\end{proposition}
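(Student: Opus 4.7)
The plan is that this proposition is essentially the standard differential-generator identity for the combined deterministic flow $\bfPsi=(\bfODEstate,\bfPhi)$, so all three parts reduce to the chain rule applied to the governing ODEs \eqref{e:QSAgen_alpha} and \eqref{e:LinearMarkovQSA}, together with a stationarity argument for part (iii).

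For (i), I would differentiate $t\mapsto h(\ODEstate_t,\Phi_t)$ using the product/chain rule:
\[
\ddt h(\ODEstate_t,\Phi_t)
= \partial_\theta h(\ODEstate_t,\Phi_t)\cdot \ddt\ODEstate_t
+ \partial_z h(\ODEstate_t,\Phi_t)\cdot \ddt\Phi_t.
\]
Substituting $\ddt\ODEstate_t = f(\ODEstate_t,G(\Phi_t))$ (absorbing the gain $\alpha$ into the generator convention of \eqref{e:diffGen}) and $\ddt\Phi_t = W\Phi_t$ yields exactly $\clD h\,(\ODEstate_t,\Phi_t)=g(\Psi_t)$, which is the claim.

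For (ii), part (i) gives the telescoping identity $\int_0^T g(\Psi_t)\,dt = h(\Psi_T)-h(\Psi_0)$. Under uniform boundedness of $\{\Psi_t\}$, continuity of $h$ yields $\sup_t|h(\Psi_t)|<\infty$, so the right-hand side stays bounded as $T\to\infty$; dividing by $T$ produces the standard ergodic-type conclusion that the time-averaged integral vanishes. (If the displayed assertion is taken literally without the $1/T$, this same calculation shows it must be read in that Cesàro sense, since the absolute integral is only bounded, not zero.)

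For (iii), I would invoke invariance of $\upvarpi$: stationarity gives $\Expect_\upvarpi[h(\Psi_t)]=\Expect_\upvarpi[h(\Psi_0)]$ for every $t\ge 0$. Writing the difference via part (i) and exchanging integrals (Fubini is legitimate because $g$ is continuous and $\upvarpi$ has compact support, so $g$ is bounded on the support),
\[
0 = \Expect_\upvarpi\!\left[\int_0^t g(\Psi_s)\,ds\right]
= \int_0^t \Expect_\upvarpi[g(\Psi_s)]\,ds
= t\,\Expect_\upvarpi[g(\Psi)],
\]
where the last equality uses stationarity of the marginal. Taking $t>0$ yields $\Expect_\upvarpi[g(\Psi)]=0$. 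The special case \eqref{e:fMeanZero} follows by applying the identity coordinatewise to $h_i(\theta,z)=\theta_i$ for $i=1,\dots,d$; then $\clD h_i(\theta,z)=f_i(\theta,G(z))$, giving $\Expect[f(\ODEstate,G(\Phi))]=\Zero$. The only mild technicalities are the coordinate projection for the vector-valued choice of $h$ and the Fubini step, both immediate from the standing continuity and compact-support hypotheses; nothing here is a genuine obstacle.
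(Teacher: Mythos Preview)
Your proposal is correct and follows essentially the same route as the paper: chain rule for (i), the fundamental theorem of calculus identity $h(\Psi_T)-h(\Psi_0)=\int_0^T g(\Psi_t)\,dt$ for (ii), and stationarity under $\upvarpi$ combined with that same identity for (iii). Your observation that (ii) as stated only yields boundedness and should be read with a $1/T$ is a fair reading; the paper's own proof of (ii) is just the one-line appeal to the fundamental theorem of calculus and does not address this point either.
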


\begin{proof}
Part (i) follows from the chain rule,  and (ii) from the fundamental theorem of calculus:
\[
  h(\Psi_T)  =  h(\Psi_0)     +   \int_0^T  g(\Psi_t)  \, dt    \,,\qquad T>0
\]
Part (iii) also follows from the fundamental theorem of calculus, using the expression above,  but with   a random initial condition.
Choose  $\Psi_0 =  (\ODEstate_0, \Phi_0)$ to be a $\Uppi$-valued random variable with distribution $ \upvarpi$.   We then have $\Psi_t\sim \upvarpi $ for all $t$.   Taking expectations of each side  completes the proof:
 \[
 \Expect[  h(\Psi_0)  ]   =  \Expect[  h(\Psi_0)  ]     + T  \Expect[  g(\Psi_0) ]
\] 
\end{proof}

Our main motivation for a Markovian framework comes from the application of solutions to \textit{Poisson's equation}:  
	for functions   $g,\, \hag\colon\prstate\to\Re$, this is expressed
\begin{equation}
		\hag(\Phi_0) = 
		\hag(\Phi_T) +   \int_0^T   \tilg(\Phi_t)\, dt\,,\qquad  T\ge 0\,, 
		\label{e:PoissonDefn}
\end{equation}
	where $\tilg(z) = g(z) -\barg$ for $z\in\prstate$,  with $\barg$ the steady-state mean.  We say that $\hag$ is the \textit{solution} to Poisson's equation, and that $g$ is the \textit{forcing function}.    If $\hag$ is continuously differentiable, then from \eqref{e:LinearMarkovQSA} we obtain $\nabla^\transpose \hag(z) W z = - \tilg(z)$ for $z\in\prstate$.
	
	Justification for smooth solutions to Poisson's equation 
	is
	 obtained in \cite{laumey22,laumey22e} when  the forcing function   is analytic,  and subject to assumptions on the frequencies defining $\bfPhi$. 	
	 A review is contained in the Appendix, and a summary of required assumptions in Assumption (A0) below.
	\begin{romannum}
		\item[]\textit{Please note:}
		\item
		The solution $\hag$ is not unique.   We always normalize so that $\Expect[\hag(\Phi)] =0$.
		
\item
In the applications of Poisson's equation used in this paper, it is frequently true that the function $g$ depends on $\Phi_t$ only through $\qsaprobe_t$.
\textit{This is this is not in general true for $\hag$}. 
		
		\item 
		Finally, on notation:  we often  write $\hag_t$ instead of  $ \hag(\ODEstate_t,\Phi_t)$  (similar notation for other functions of $\bfPsi$).
		
	\end{romannum}

	We will not require solutions for the joint process $\bfPsi$, but   require a slight abuse of notation:  for a vector-valued function on the joint state space  $g\colon\Uppi\to \Re$,  we denote by $\hag (\theta,\varble)$   the solution to Poisson's equation with $\theta\in\Re^d$ fixed:
	\begin{equation}
		\hag(\theta,  \Phi_0) = 
		\hag(\theta, \Phi_T) +   \int_0^T   \tilg(\theta,  \Phi_t)\, dt\,,\qquad  T\ge 0\, .
		\label{e:PoissonDefnAbuse}
	\end{equation}
	This is applied with $g=f$  [the QSA ODE vector field \eqref{e:QSA_ODE_gen}];
	the solution $\haf$ plays a part in the    definition of $\barUpupsilon$.

Further implications require a few assumptions and notation:
	
\wham{Assumptions}

The following assumptions are in force throughout much of the paper.

The first assumption sets restrictions on frequencies.

\begin{subequations}

\wham{(A0a)}   
$\qsaprobe_t  = G_0(\qsaprobe _t^0  )$ for all $t$,   with $\qsaprobe _t^0$ defined in \eqref{e:qsaprobe0}.  The function 
$G_0\colon\Re^K \to\Re^m$ is assumed to be analytic, with the coefficients in the Taylor series expansion for $G_0(\qsaprobe _t^0  )$ absolutely summable.     

\wham{(A0b)}   
The frequencies $\{\omega_1\,,\dots\,,  \omega_K\}$ are chosen of the form
\begin{equation}
	\begin{aligned}
		&\omega_i  = \log(a_i/b_i) > 0\,, \ \  1\le i\le K\,, \ \ \textit{where   $\{a_i,b_i\}$ are positive integers.   }
		\\
		&  \textit{$\{\omega_i : 1\le i\le K\}$ are linearly independent over the field of rational numbers.}
	\end{aligned} 
	\label{e:logFreq}
\end{equation}

\wham{(A1)}    
The functions $\barf$ and $f$ are
	 Lipschitz continuous:  for   a constant $\Lip_f  <\infty$,
\begin{equation}
\begin{aligned} 
		\|\barf(\theta') - \barf(\theta)\| &\le \Lip_f \|\theta' - \theta\|, 
		\\
		\|f(\theta',\qsaprobe) - f(\theta,\qsaprobe)\| 
		+
		\|f(\theta,\qsaprobe') - f(\theta,\qsaprobe)\| 
		&\le \Lip_f \bigl[ \|\theta' - \theta\|  +   \|\qsaprobe' - \qsaprobe\|  \bigr]
\end{aligned} 
\label{e:fLip}
\end{equation}
for all $ \theta' \,, \theta\in\Re^d$,
$\qsaprobe\,, \qsaprobe' \in\Re^m $.

\wham{(A2)}   
The vector fields $f$ and $\barf$   are each twice continuously  differentiable,  with derivatives denoted 
\begin{align}
A(\theta, z) & = \partial_\theta f \, (\theta, z)  \,,
\qquad
 \barA(\theta) = \partial_\theta \barf\, (\theta)
		\label{e:barfDer}
\end{align}
%\notes{}

\wham{(A3)} 
There exists a  solution $\haf$ to Poisson's equation that is continuously differentiable on $\Uppi$,  and
normalized so that $ \Expect[ \haf(\theta,  \Phi) ] = \Zero$ for each $\theta$,  with $ \Phi\sim \uppi$.   Its Jacobian with respect to $\theta$ is denoted
\begin{equation}
\haA(\theta ,z)  \eqdef  \partial_\theta \haf\, (\theta, z)
\label{e:der_haA}
\end{equation}
Moreover,  there are continuously differentiable solutions to Poisson's equation for each of the two forcing functions $\haf$ and $\Upupsilon$,  with
	\begin{equation}
		\Upupsilon(\theta,z) =   - \haA   (\theta,z )    f (\theta, G(z))    
\label{e:Upupsilon}
\end{equation}
The solutions are denoted, respectively,    $\hahaf$  and $\haUpupsilon$:
\begin{align}
\hahaf(\theta,  \Phi_{t_0} )  &= 
			\int_{t_0} ^{t_1}  \haf( \theta ,\Phi_t)    \, dt    + \hahaf( \theta,   \Phi_{t_1} )  
\nonumber
   \\
\haUpupsilon(\theta,  \Phi_{t_0} )  &= 
			\int_{t_0} ^{t_1}  [ \Upupsilon( \theta ,\Phi_t)-  \barUpupsilon(\theta ) ]     \, dt    + \haUpupsilon( \theta,   \Phi_{t_1} )  			 \,,   && 0\le t_0\le t_1
			\label{e:haEverythingElse}
\\
\textit{with}\quad 
\barUpupsilon(\theta ) & =     \Expect[   \Upupsilon(\theta ,\Phi )  ]
				   = -
				\int_\prstate   \haA(\theta ,z )      f(\theta,G(z))   \,  \uppi(dz) \,,  && \theta\in\Re^d 
\nonumber
\end{align}
They are also normalized: $    \Expect[ \hahaf(\theta,  \Phi) ] =    \Expect[ \haUpupsilon(\theta,  \Phi) ] =   \Zero$ for each $\theta$.

\wham{(A4)}
The ODE $\ddt \odestate_t  = \barf  ( \odestate_t  )$ is globally asymptotically stable with  $\theta^\ocp$ the unique equilibrium. Moreover, the matrix   $A^\ocp = \barA (\theta^\ocp)$ is Hurwitz.

	\end{subequations}	

\smallbreak

Assumption~(A0b) is justified in \Cref{s:BakerPoisson}, but deserves some explanation here.
%Fi
It is imposed for two reasons:  
\begin{romannum} 
\item   It is a sufficient condition ensuring that $\barUpupsilon\equiv \Zero$,   assuming that Assumption~(A3) holds so that  $\Upupsilon$ is well defined.    See the example following \Cref{t:Couple_theta} to see that constraints on the frequencies are indeed needed for this. 

\item  If   $f(\theta, G(z))$ is an analytic function of $(\theta,z)$   on an appropriate domain, then  Assumption~(A3) holds.
\end{romannum}
Claim (i) is an application of \Cref{t:varpi}~(iii),
%t
while claim~(ii) follows from \Cref{t:hah}.

\smallskip

As for the remaining assumptions, we require  (A1)--(A4) to obtain sharp bounds on bias and variance. 
In particular, the functions introduced in (A3) determine all of the terms in  \eqref{e:BigGlobalODE}, with  $\barUpupsilon$ is defined in \eqref{e:haEverythingElse}.

The assumptions on $G_0$ in Assumption~(A0a) are imposed so that the probing signal is almost periodic \cite{amepro13,boh18}.  
This combined with the Lipschitz conditions in  (A1) imply a uniform version of the Law of Large Numbers:
\begin{proposition}
\label[proposition]{t:uniLLN}
Under (A0a) and (A1) the uniform Law of Large Numbers holds:
\begin{equation}
\lim_{T\to\infty }   \sup_{\theta, z_0} \frac{1}{ 1 +  \| \theta\|  }  \left |
\frac{1}{T }\int_0^T  \bigl[f(\theta,\qsaprobe_t) - \barf(\theta) \bigr]  \, dt
				\right|   =0
\label{e:LLNf}
\end{equation}
where the supremum is over  $\theta_0\in\Re^d$,
			  and   initial conditions $\Phi_0 = z_0\in\Upomega$.
\qed
\end{proposition}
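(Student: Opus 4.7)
The plan is to recognize the integrand as an almost-periodic function of $t$, parametrized by $(\theta, z_0) \in \Re^d \times \Upomega$, and to show that this \emph{family} is uniformly almost periodic with common mean zero, so that a uniform version of Bohr's mean theorem applies. The normalization $1/(1+\|\theta\|)$ is the device that renders the family uniformly bounded despite the parameter $\theta$ ranging over all of $\Re^d$.

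Concretely, I would first use (A0a) to conclude that $\bfqsaprobe$ is Bohr almost periodic: since $G_0$ is analytic with absolutely summable Taylor coefficients and $\qsaprobe^0_t$ is a trigonometric polynomial, $\qsaprobe_t$ admits an absolutely convergent expansion in complex exponentials of $t$. Hence for each $\delta>0$ there is a relatively dense set $E_\delta \subset \Re$ satisfying $\sup_{t, z_0} \|\qsaprobe^{z_0}_{t+\tau} - \qsaprobe^{z_0}_t\| < \delta$ for every $\tau \in E_\delta$; the uniformity in $z_0$ follows because $z_0$ only affects initial phases in the diagonal flow \eqref{e:LinearMarkovQSA}, and $\tau$ being an almost period depends only on how close $e^{W\tau}$ is to the identity. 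Next define
\[
\tilde g_{\theta, z_0}(t) \eqdef \frac{1}{1+\|\theta\|}\bigl[f(\theta, \qsaprobe_t^{z_0}) - \barf(\theta)\bigr],
\]
and use (A1) to verify three properties: uniform boundedness $\sup_{t, \theta, z_0} |\tilde g_{\theta, z_0}(t)| \le C$ (using $|f(\theta, z) - \barf(\theta)| \le 2L_f\|\theta\| + |f(0,z)| + |\barf(0)|$ together with the normalization and compactness of the range of $\bfqsaprobe$); uniform almost periodicity in $t$, namely $|\tilde g_{\theta, z_0}(t+\tau) - \tilde g_{\theta, z_0}(t)| \le L_f\delta$ for $\tau \in E_\delta$ (which holds uniformly in $(\theta, z_0)$ because Lipschitz continuity of $f$ in its second argument is uniform in the first); and common Bohr mean zero, which is the defining identity \eqref{e:barfQSA} for $\barf$ combined with equidistribution of the flow $\bfPhi$.

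The final step is a uniform Bohr mean theorem: for a uniformly bounded, uniformly almost periodic family of scalar functions with common mean zero, the time averages $M_T[\tilde g_{\theta, z_0}] = \frac{1}{T}\int_0^T \tilde g_{\theta, z_0}(t)\,dt$ converge to zero uniformly over the family. The proof is the classical Cauchy-type estimate: given $\epsilon > 0$, pick $\delta = \epsilon/(2L_f)$ and $L$ such that every interval of length $L$ meets $E_\delta$; shifting the integration by almost periods and cancelling shows $|M_T[\tilde g_{\theta, z_0}] - M_{T'}[\tilde g_{\theta, z_0}]| < \epsilon$ whenever $T, T' \ge T_0(\epsilon)$, with $T_0$ depending only on $L$ and the uniform bound $C$---never on $(\theta, z_0)$. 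I expect this uniform extension of Bohr's theorem to be the main technical obstacle to formalize carefully: standard textbook proofs (Bohr; Amerio--Prouse; Corduneanu) treat a single function, and one needs to verify that every estimate in that proof depends only on the constants furnished by uniform almost periodicity. Once this is in hand, combining with the mean-zero property delivers the claim.
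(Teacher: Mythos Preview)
Your approach is essentially what the paper has in mind: the paper gives no detailed proof, only the one-line remark preceding the proposition that (A0a) makes $\bfqsaprobe$ almost periodic (citing Amerio--Prouse) and that this combined with the Lipschitz bound (A1) yields the uniform LLN. Your proposal is precisely the detailed execution of that sketch---normalize to get a uniformly bounded, uniformly almost-periodic family with common mean zero, then invoke a uniform Bohr mean theorem---so there is no meaningful difference to report. One small refinement: in your step~(4) the invocation of ``equidistribution of the flow $\bfPhi$'' is stronger than needed and not guaranteed by (A0a) alone; what you actually use is that the Bohr mean of an almost periodic function is its zeroth Fourier--Bohr coefficient, which is manifestly independent of the phase shift $z_0$.
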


	\subsection{Steady  state realizations and the LLN}
\label{s:coupling}

Poisson's equation   \eqref{e:PoissonDefn} is one of several concepts from the theory of Markov processes adapted to the deterministic setting of this paper.   Two more described here are used to obtain the LLN for the joint process  $\bfPsi$, criteria for a unique invariant measure $\upvarpi$,   and sample path coupling of trajectories from distinct initial conditions.

\wham{Equicontinuity}

We say that $\bfPsi$ is an \textit{e-process}   if  for each continuous function $g\colon \bivstate   \to\Re$ with compact support,    the family 
of functions $ \{ g ( \flow_t (\varble , \varble ) ):  t\ge 0 \}$ is equicontinuous on compact subsets of $\bivstate$ \cite[Ch.~6]{MT}.

Under ultimate boundedness in the form \eqref{e:UB},  denote $	\sstate_0 =\{  (\theta,z) :  \|\theta\| \le B_0 \}$.

\begin{theorem}
	\label[theorem]{t:e-chain}
	Suppose that (A1)-(A3) hold, and in addition $\bfPsi$ is an  e-process that is ultimately bounded, so that \eqref{e:UB} holds.   Then, there is a Feller transition kernel $\Inv$ satisfying $\Inv(q_0, \sstate_0) = 1$ for each $q_0\in\bivstate$, and the following hold for each continuous function $g\colon\bivstate\to\Re$:     
	\whamrm{(i)}  For any   compact set $K\subset \bivstate$,
	\[
	\lim_{T\to\infty }   \sup_{\Psi_0\in K}  \left |
	\frac{1}{T }\int_0^T  \bigl[   g(\Psi_t) -   \Inv g\, (\Psi_0) \bigr]  \, dt
	\right|   =0
	\]
	where   $ \Inv g\, (q_0) \eqdef \int \Inv (q_0 , dq)  g(q)$ for $q_0\in \bivstate$ and $g:\bivstate   \to\Re$.
	
	\whamrm{(ii)}  $\Inv g\, (\Psi_t) =  \Inv g\, (\Psi_0) $  for each $ t\ge 0  $  and  initial condition $\Psi_0 $.
	
	\whamrm{(iii)}    The probability measure $\Inv(q_0,\varble) $ is invariant for $\bfPsi$, for each   $q_0\in \bivstate$. 
	\qed 
\end{theorem}

\begin{figure}[h]
	\centering
	\includegraphics[width=0.8\hsize]{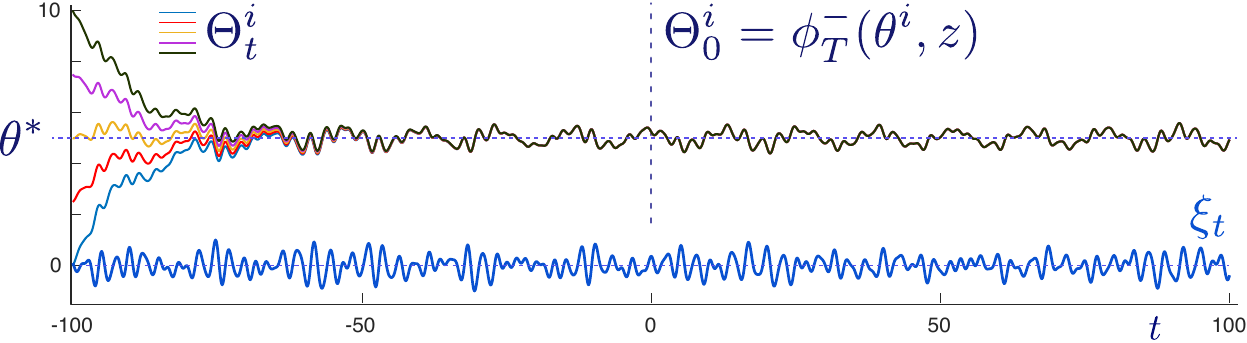}
	
	\caption{Topological coupling from the past. }
	\label{fig:CFP}
\end{figure}

\wham{Coupling from the past}
This is  a concept introduced in \cite{prowil98} to achieve so-called \textit{perfect sampling} from a given distribution.      
Translating to the setting of this paper,  
the construction involves consideration of the QSA ODE initialized
in ``the past'',  at a time $-T$.  The   process $\{\Phi_t : -\infty <t <\infty \}$ is held fixed as we let $T\uparrow \infty$.

\Cref{fig:CFP} illustrates what we can expect,  based on a  scalar linear QSA ODE,    $\ddt \ODEstate_t =  - \alpha [\ODEstate_t - \theta^*  + \qsaprobe_t]$,   with $\alpha>0$ and $\bfqsaprobe $   a linear combination of   sinusoids.    The plot shows initialization at $-T = -100$,  with $\{\theta^i\}$ equally spaced between $0$ and $10$.    The five trajectories   may be expressed,  
\begin{equation}
	\ODEstate_t^i   =  \flow_{t+T}(\theta^i, \Phi_{-T} )  \,,\qquad    t\ge -T \, ,  \quad i=1,\dots, 5
	\label{e:CFPfive}
\end{equation}
In the Appendix,  this complex notation is abbreviated to $ \flow^-_T (\theta^i, z)$,   with $z = \Phi_0$.   

Uniqueness of the invariant measure follows from a proof that  $  \| \ODEstate^i_t  -   \ODEstate^j_t\|  \to 0$ as $T\to\infty $, 
for each    $t\ge 0$ and initial conditions  $\theta^i \neq \theta^j$.  This is obvious in this simple example, but is shown to hold in broad generality.    
Hence coupling is established in a topological sense, rather than the almost sure coupling obtained in \cite{prowil98} for Markov chains.

Coupling implies that there is a stationary version of the pair process of the form
\begin{equation}
	\Psi_t^\infty =  ( \ODEstate^\infty_t ,   \Phi_t^\infty )=  ( \flow_\infty^-(\Phi_t^\infty) ,   \Phi_t^\infty )
	\,, \qquad -\infty < t<\infty
	\label{e:PsiState}
\end{equation}
in which $\flow_\infty^- \colon\prstate \to\Re^d$ is continuous.

%%%%%%%%%%% %%%%%%%%%%%  
\subsection{Three steps towards the perturbative mean flow }
 
 The notation in \eqref{e:Upupsilon}  is complex, which brings us to the following compact alternative:
	For $h\colon \Uppi \to\Re^d$ that is continuously differentiable in its first variable we let $\Df h$ denote its directional derivative in the direction $f$,  where $f$ is the vector field for the QSA ODE:	
\begin{equation}
[ \Df h ] (\theta,z )  = \partial_\theta h\,  (\theta,z )   \cdot f (\theta, G(z))  \,, \qquad (\theta,z )  \in \Uppi
		\label{e:Dfg}
\end{equation} 
This defines a component of the differential generator \eqref{e:diffGen};
 recall from  \Cref{t:QSAgenerator}  that  $g=\clD h$ implies that  $ g(\Psi_t)  =\ddt h(\Psi_t) $.

The following companion to  \Cref{t:QSAgenerator} will be used in the following:
	for any smooth solution to Poisson's equation with forcing function $h\colon\Uppi\to\Re$,
	\begin{align}
		\ddt \hah(\ODEstate_t,z )  &=  \alpha [\Df \hah ] (\ODEstate_t,z )  \,,   \ z\in\prstate  
		\label{e:chainRule_f}
		\\
		\ddt \hah(\ODEstate_t, \Phi_t )  &=  \alpha [\Df \hah ] (\ODEstate_t, \Phi_t )     
		-  [ h(\ODEstate_t, \Phi_t )  -  \barh(\ODEstate_t) ]   
		\label{e:chainRule_f2}
	\end{align}	
Using $h=\haf$,  the definition \eqref{e:Upupsilon} gives
\begin{equation}
\Upupsilon_t  =          - [\Df \haf ] ( \ODEstate_t, \Phi_t)     =  - \frac{1}{\alpha}  \{ \ddt \haf_t +  [ f(\ODEstate_t, \qsaprobe_t )  -  \barf(\ODEstate_t) ]\}    
\label{e:UpupsilonDer}
\end{equation}

We now proceed through the three steps, with this representation in view:
	\begin{equation}
		\ddt\ODEstate_t = \alpha \bigl[  \barf (\ODEstate_t )     +  \tilXi_t  ] \,,
		\label{e:QSA_ODE_fixed}
	\end{equation} 
	where $  \tilXi_t   =  f(\ODEstate_t,\qsaprobe_t) -  \barf(\ODEstate_t) $ is called the  \textit{apparent noise}.  Understanding \eqref{e:BigGlobalODE} is equivalent to determining the functions $\{ \clW^i \}$ in the representation
	\begin{equation}
		\begin{aligned} 
			\tilXi_t  &  =
			-  \alpha \barUpupsilon_t 
				+    \alpha^2 \clW_t^0 + \alpha \ddt \clW_t^1  +   \tfrac{d^2}{dt^2} \clW_t^2  
		\end{aligned} 
		\label{e:EffNoiseDecomp}
	\end{equation}
	
	\wham{Step 1:}     Apply \eqref{e:chainRule_f2}  with $g=\haf$:
	\[
	\ddt  \haf(\ODEstate_t,  \Phi_t)   =   \partial_\theta \haf(\ODEstate_t,  \Phi_t)  \ddt \ODEstate_t    - 
	[ f(\ODEstate_t,  \qsaprobe_t)  -\barf(\ODEstate_t)] 
	\]
	This gives the first transformation of the apparent noise:
	\[
	\tilXi_t  =    \underbrace{-\ddt  \haf(\ODEstate_t,  \Phi_t)  }_{\text{High pass}}
	+    
	\underbrace{\alpha   \partial_\theta \haf(\ODEstate_t,  \Phi_t) f(\ODEstate_t,  \qsaprobe_t)   \phantom{\ddt }}_{\text{Attenuation}}
	\]
Recalling \eqref{e:Upupsilon}  gives  in shorthand notation,
\begin{equation}
		\tilXi_t  =   -\ddt  \haf_t   -\alpha \Upupsilon_t
		\label{e:EndOfStep1}
\end{equation}

	\wham{Step 2:}    Repeat previous argument with $\haf$:
\begin{equation}
	\begin{aligned}
		\ddt \hahaf  (\ODEstate_t,\Phi_t)    &=    \alpha [\Df \hahaf ] (\ODEstate_t, \Phi_t )      -  \haf_t     
		\\
		&\Rightarrow  \haf_t   =   \alpha [\Df \hahaf ] (\ODEstate_t, \Phi_t )   
		-  \ddt \hahaf_t  				
		\\					
		&\Rightarrow 
		\ddt \haf_t   =   \alpha \ddt  [\Df \hahaf ] (\ODEstate_t, \Phi_t ) 
		-  \tfrac{d^2}{dt^2} \hahaf_t  
	\end{aligned} 
	\label{e:step2}
\end{equation}
	
\wham{Step 3:}    Repeat   with $\Upupsilon$:
\begin{equation}
	\begin{aligned}
		\ddt \haUpupsilon (\ODEstate_t,\Phi_t)    &=    \alpha [\Df \haUpupsilon ] (\ODEstate_t, \Phi_t )      - [  \Upupsilon_t     -   \barUpupsilon(\ODEstate_t)]
		\\
		&\Rightarrow  \Upupsilon_t   = \barUpupsilon_t +  \alpha [\Df \haUpupsilon ] (\ODEstate_t, \Phi_t ) 
		-\ddt \haUpupsilon_t
	\end{aligned} 
	\label{e:step3}
\end{equation}
	Steps 2 and 3 combined with  \eqref{e:EndOfStep1} lead to the P-mean flow  representation:

%\notes{}

\begin{theorem}
\label[theorem]{t:PMF}
Suppose that continuously differentiable solutions to Poisson's equation exist, for each of the three
forcing functions $f$,   $\haf$,  and $\Upupsilon$.   Then, 
\begin{romannum}
\begin{subequations}
\item   The pre-P mean flow representation holds:
\begin{align} 
		\ddt Y_t  & =   \alpha\bigl[   \barf( Y_t    )   
			- \alpha  \bigl(B_t  \haf_t  +  \Upupsilon_t  \bigr)  \bigr]
		\,,  \qquad B_t    =   \int_0^1 \barA ( Y_t   -  r \alpha \haf_t)    \, dr  
 \label{e:prematureMeanflow}
\\
 \ODEstate_t & =	Y_t    - \alpha \haf_t
 \label{e:Ydef}
\end{align}
\end{subequations}
\begin{subequations}
\item
The P mean flow  representation \eqref{e:BigGlobalODE}  holds with   
		\begin{align}
			\clW_t^0  = \clW^0  (\ODEstate_t,\Phi_t)    & \eqdef  -  [\Df \haUpupsilon ] (\ODEstate_t,\Phi_t)    
			\label{e:AllTheNoise0}   
			\\
			\clW_t^1  = \clW^1  (\ODEstate_t,\Phi_t)    & \eqdef - [\Df \hahaf ] (\ODEstate_t,\Phi_t)   +    \haUpupsilon  (\ODEstate_t,\Phi_t)       
			\label{e:AllTheNoise1}
			\\
			\clW_t^2  = \clW^2  (\ODEstate_t,\Phi_t)    & \eqdef   \hahaf(\ODEstate_t,\Phi_t)  
			\label{e:AllTheNoise2}
\end{align}  
%\qed
 	\label{e:AllTheNoise}%
\end{subequations}%
\end{romannum}%
\end{theorem}
\begin{proof}
	Using the definition $Y_t \eqdef \ODEstate_t  + \alpha \haf_t$ in \eqref{e:Ydef}, we obtain from \eqref{e:QSA_ODE_fixed} and \eqref{e:EndOfStep1},
	\[
	\ddt Y_t = \ddt \ODEstate_t+ \alpha \ddt \haf_t = \alpha[\barf(\ODEstate_t) -  \alpha\Upupsilon_t]
	\]
	The following identity is part of the proof of the mean value theorem:
	\[\begin{aligned}
		\barf(\ODEstate_t)  = \barf(Y_t - \alpha \haf_t) &= \barf(Y_t) + \int^1_0 \ddr \barf(Y_t - r\alpha \haf_t) \, dr
		\\
		&= \barf(Y_t) - \alpha\int^1_0 \partial_\theta \barf(Y_t - r\alpha \haf_t) \, dr \haf_t
	\end{aligned}
	\]
	which completes the proof for part (i).
	
	The representation
	\eqref{e:BigGlobalODE} in part (ii)
	is obtained on substituting the identities in  \eqref{e:step2} and \eqref{e:step3}  into \eqref{e:EndOfStep1}.
\end{proof}

\Cref{t:hah} provides conditions ensuring existence of smooth solutions to Poisson's equation.
Analysis in \Cref{s:BakerPoisson} also leads to the proof that $\barUpupsilon(\theta) = \Zero$ when the frequencies satisfy (A0).

%%%%%%%%%%%%%%%%%%%%%%%%%%%%%%%%%%%%%%%%%%%%%%%%%%%%%%%%%%%%%%%%%%%

%%%%%%%%%%%%%%%%%%%%%%%%%%%%
%%%%%%%%%%% %%%%%%%%%%%  

\subsection{Some stability theory}
\label{s:someStability}

%He
 The Lyapunov bound is recalled here in slightly modified form, and given a new name:
\begin{romannum}  
	\item[\textbf{(V4)}] 
	There exists a $C^1$ function $V:\Re^d\rightarrow[1,\infty)$ and  a constant $\delta_0>0$ such that  the following bound holds for \eqref{e:barf_ODE_gen} for any time $\tau\ge 0$ for which $ \|\odestate_\tau\| \ge   \delta_0^{-1} $:
	\begin{equation}
		%	V(\od
		\frac{d}{d\tau} V(\odestate_{\tau})  \leq  -  \delta_0  V(\odestate_{\tau}) 
		\label{e:ddt_bound_Lyapfun}
	\end{equation}
	Moreover, the  function is Lipschitz continuous with linear growth:  
	there exists a constant $\Lip_V  <\infty$ such that  
	\begin{equation}
		\begin{aligned}
			|  V(\theta') - V(\theta)|  &  \le \Lip_V  \|\theta' - \theta\|\,, &&   \textit{ for all $\theta$,  $\theta'$. }
			\\
			V(\theta)  &\ge  \|\theta \|\,,  && \textit{when $\|\theta \|  \ge \delta_0^{-1} $}
		\end{aligned} 
		\label{e:VLip}
	\end{equation}	  
\end{romannum}
The use of ``(V4)'' is because of its close connection with a similar ``drift condition'' in the theory of Markov chains and processes \cite{MT}.  

It is sometimes convenient to consider a relaxation:
\begin{romannum}
	\item[\textbf{(V4')}] 
	There is a function $V\colon\Re^d\to\Re_+$ satisfying the bound \eqref{e:VLip},
	together with constants $T<\infty$ and $\delta_1>0$ such that for each initial condition $\odestate_0$ and  $\tau\ge 0$,
	\begin{equation}
		V(\odestate_{\tau+T}) - V(\odestate_\tau) 
		\leq   - \delta_1 \|\odestate_\tau \| , \quad \text{for all } \tau \geq  0\,, \ \|\odestate_\tau \|> \delta_1^{-1}
		\label{e:V4T}
	\end{equation}
\end{romannum}
This unifies the theory since it is obviously implied by (V4),  and we will see that it is  implied by asymptotic
stability of the ODE@$\infty$.

The implications of (V4') for the mean flow are summarized first:
\begin{proposition}
	\label[proposition]{t:qsaprobeExpAS}
	Suppose that (V4') holds, along with (A1) and (A4).   Then the ODE \eqref{e:barf_ODE_gen} is exponentially asymptotically stable:  for positive constants $b$ and $\delta$,  and any initial  condition $\odestate_0$,
	\[
	\| \odestate_t -  \theta^\ocp \|   \le  b \| \odestate_0-  \theta^\ocp \|    e^{-\delta t}  \,,\qquad t\ge 0
	\]
\end{proposition}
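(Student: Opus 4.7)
The plan is to combine three regimes: far-field exponential contraction driven by (V4'), a transit across a compact region of uniformly bounded duration enabled by the GAS property in (A4), and local exponential stability at $\theta^\ocp$ produced by the Hurwitz hypothesis on $A^\ocp = \barA(\theta^\ocp)$ also in (A4). First I would exploit (V4') to obtain a discrete-time geometric contraction: whenever $\|\odestate_\tau\| > \delta_1^{-1}$, the bound $V(\odestate_{\tau+T}) - V(\odestate_\tau) \le -\delta_1 \|\odestate_\tau\|$ combines with the Lipschitz upper bound $V(\theta) \le V(0) + \Lip_V \|\theta\|$ and the lower bound $V(\theta) \ge \|\theta\|$ to give $V(\odestate_{\tau+T}) \le (1-\rho) V(\odestate_\tau)$ for some fixed $\rho \in (0,1)$, as long as $V(\odestate_\tau)$ exceeds a threshold $M_0$. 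Iterating and interpolating via the Lipschitz bound on $\barf$ from (A1) yields a continuous-time estimate $\|\odestate_t\| \le c_1 \|\odestate_0\| e^{-\alpha_1 t}$ valid up to the first entrance time $T_1(\odestate_0)$ into the compact sublevel set $K \eqdef \{V \le M_0\}$, with $T_1(\odestate_0) \le c_2 + \alpha_1^{-1} \log(1 + \|\odestate_0\|)$.

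Second, near $\theta^\ocp$ the Hurwitz hypothesis supplies a quadratic Lyapunov function $V_L(\theta) = (\theta - \theta^\ocp)^\transpose P (\theta - \theta^\ocp)$ with $P \succ 0$ solving $(A^\ocp)^\transpose P + P A^\ocp = -I$, and a first-order Taylor expansion of $\barf$ about $\theta^\ocp$ gives a neighborhood $U$ of $\theta^\ocp$ on which $\tfrac{d}{dt} V_L(\odestate_t) \le -\delta_L V_L(\odestate_t)$. This yields a local bound $\|\odestate_t - \theta^\ocp\| \le b_L \|\odestate_{\tau_U} - \theta^\ocp\| e^{-\delta_L(t - \tau_U)}$ for $t \ge \tau_U$, where $\tau_U$ is the first entrance time into $U$. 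The GAS property in (A4), combined with compactness of $K$ and continuous dependence of solutions on initial conditions, provides a uniform bound $T_K$ on the transit time from $K$ to $U$, so $\tau_U(\odestate_0) \le T_1(\odestate_0) + T_K \le c_3 + \alpha_1^{-1} \log(1 + \|\odestate_0\|)$.

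Setting $\delta = \min(\alpha_1, \delta_L)$ I would then assemble the global bound in two pieces. For $t \in [0, \tau_U]$ the far-field estimate together with the triangle inequality and boundedness of $K \cup U$ yield $\|\odestate_t - \theta^\ocp\| \le c_1' (1+\|\odestate_0\|) e^{-\delta t}$; and for $t \ge \tau_U$ the local bound reads $\|\odestate_t - \theta^\ocp\| \le b_L r_U e^{\delta_L \tau_U} e^{-\delta_L t}$, where $r_U$ is the radius of $U$. The logarithmic growth $\tau_U = O(\log(1+\|\odestate_0\|))$ converts $e^{\delta_L \tau_U}$ into a polynomial factor in $\|\odestate_0\|$; this factor is absorbed by the linear factor $\|\odestate_0 - \theta^\ocp\|$ on the right precisely because $\delta \le \alpha_1$. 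The main obstacle is this last bookkeeping step: the polynomial blow-up coming from $e^{\delta_L \tau_U}$ must be reconciled with the linear dependence on $\|\odestate_0 - \theta^\ocp\|$ demanded in the conclusion. The resolution is to force $\delta \le \alpha_1$, trading a possibly slower asymptotic rate for a clean global bound, and separately handling small $\|\odestate_0 - \theta^\ocp\|$ (where the local bound applies directly) and intermediate values in $K \setminus U$ (where $\|\odestate_0 - \theta^\ocp\|$ is bounded below by $r_U$, so the transit contribution is absorbed by a sufficiently large $b$). A closely related stitching argument underlies the vanishing-gain analysis of \cite{laumey22}.
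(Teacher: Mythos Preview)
Your three-regime decomposition (far-field via (V4'), bounded transit via GAS, local exponential stability via the Hurwitz $A^\ocp$) is exactly the paper's argument. The paper sidesteps the stitching difficulty you flag as the main obstacle by never dropping down to the constant $r_U$: instead it chains multiplicative bounds through all three regimes. With entrance times $T_1$ (into the compact set $B_1$) and $T_2\ge T_1$ (into the local region $B_2$), \Cref{t:littleV4bdd} yields $\|\odestate_{T_1}-\theta^\ocp\|\le b_1\|\odestate_0-\theta^\ocp\|e^{-\delta_1 T_1}$; the uniformly bounded transit gives $\|\odestate_{T_2}-\theta^\ocp\|\le b_1\|\odestate_{T_1}-\theta^\ocp\|$ after enlarging $b_1$; and the local bound gives $\|\odestate_t-\theta^\ocp\|\le b_2\|\odestate_{T_2}-\theta^\ocp\|e^{-\delta_2(t-T_2)}$. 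Composing and taking $\delta=\min(\delta_1,\delta_2)$ produces the global estimate with constant $b_1^2 b_2 e^{\delta(T_2-T_1)}$, and the exponential of the transit time is just a fixed constant. No polynomial absorption and no relation between the local and far-field rates is needed. (As a minor correction to your version: the absorption requires $\delta_L\le\alpha_1$, not merely $\delta\le\alpha_1$, since it is $e^{\delta_L\tau_U}$ that appears; this is harmless because $\delta_L$ can always be weakened, but the paper's chaining makes the issue disappear.)
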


We now focus on the implications to the QSA ODE. Consider the family of stopping times,  for any $b>0$,   
\[
\tau^\alpha_{b} (q_0) =  \min \{  t\ge 0 :    \| \ODEstate_t -\theta^* \|  \le  b  \}  \,,\qquad q_0 = (\ODEstate_0,\Phi_0)\in\bivstate \, .
\]

Filter design proceeds under the assumption that the state process is ultimately bounded, in the sense of 	\eqref{e:UB}, but we require something slightly stronger for the family of QSA ODEs with fixed gain:

\wham{\boldmath{$\alpha^0$}-ultimate boundedness:}   With  $\alpha^0>0$ constant, the family of QSA ODEs,
parameterized by a constant gain $\alpha \in (0,\alpha^0]$, 
is   \textit{$\alpha^0$-ultimately bounded} if there are fixed constants $\lilb \leq \bigb < \infty$  such that 
$\sup_{q_0 \in K}\tau^\alpha_{\lilb}(q_0) <\infty$ for any compact set $K \subseteq \bivstate$, and    
\begin{equation*}
	\| \ODEstate_t -\theta^* \|  \le  {\bigb}  \,,\qquad t\ge \tau^\alpha_{\lilb}(q_0)
	%\label{e:qsaUniUB}
\end{equation*}

\Cref{t:ODEstate_is_bdd}  extends the bound \eqref{e:ExpStableAlmost} to the QSA ODE under (V4'),
and provides sufficient conditions for   $\alpha^0$-ultimately boundedness.

\begin{theorem}
	\label[theorem]{t:ODEstate_is_bdd}
	Suppose that  (A1), (A2), (A4) and (V4') hold for the QSA ODE \eqref{e:QSAgen_alpha}.
	Then, there is $\alpha^0>0$ and   positive constants $\bdd{t:ODEstate_is_bdd}$ and $\bdde{t:ODEstate_is_bdd}$ such that for any $\alpha\in (0,\alpha^0]$ and any initial condition $q_0 = (\ODEstate_0,\Phi_0)$,  the following hold with   $\lilb =  1/\bdde{t:ODEstate_is_bdd} $:
	
	\whamrm{(i)} 
	$\displaystyle   \tau^{\alpha}_{\lilb}(q_0)   
	=  \frac{\alpha^0}{\alpha} \tau^{\alpha^0}_{\lilb}(q_0) < \infty$.

	\whamrm{(ii)}  $\displaystyle		\| \ODEstate_t  - \theta^*\|   \le  \bdd{t:ODEstate_is_bdd} \| \ODEstate_0  - \theta^* \| \exp( -\alpha \bdde{t:ODEstate_is_bdd} t )  \,, $ 
	\quad
	for $ t \le \tau^\alpha_{\lilb}(q_0)  $
	
	\smallskip

	\whamrm{(iii)}  There is coupling of solutions:
	\[
	\| \flow_t (\ODEstate_0,\Phi_0) - \flow_t(\theta^*,\Phi_0) \| \leq
	\bdd{t:ODEstate_is_bdd}  \exp\bigl(-\bdde{t:ODEstate_is_bdd} (t - \tau^\alpha_{\lilb}(q_0) ) \bigr) 
	\,, 
	\quad\textit{for $ t \ge \tau^\alpha_{\lilb}(q_0)  $.}
	\]
\end{theorem}

The full proof of the theorem is postponed to the Appendix. Part (i) follows from time-scaling,  while (ii) is based
on
the following two steps:
\begin{alphanum}
	\item     Bounds on the difference between solutions to
	QSA  ODE \eqref{e:QSAgen_alpha}
	and
	the solutions to the ODE, 
	\begin{equation}
		\ddt \odestate_t  =   \alpha \barf  ( \odestate_t  )
		\label{e:barf_ODE_gen_alpha}
	\end{equation}
	
	\item  A proof that (V4') implies a semi-exponential for the mean flow as in \eqref{e:ExpStableAlmost}.
\end{alphanum}

Part (iii) is based on the theory of  Lyapunov exponents, defined in terms of the sensitivity process
$\clS_t^0 \eqdef \partial_\theta \flow_t(\theta,z)$ with    $\Psi_0 = (\theta ,z ) \in \bivstate$.  Subject to (A1) and (A2), it is the solution to the ODE
\begin{equation}
\ddt \clS_t^0  =   \alpha  A(\ODEstate_t, \qsaprobe_t) \clS^0_t \,,   \qquad   \clS_0^0 = I  \ (d\times d) \, .
\label{e:SensitivityODE}
\end{equation}
The Lyapunov exponent is defined as  
\begin{equation}
	\Lambda_{\ODEstate} = \lim_{t\to\infty}\tfrac{1}{t} \log (\| \clS_t^0 \|)
	\label{e:LyapExp}
\end{equation}
Under the assumptions of \Cref{t:ODEstate_is_bdd}, it is shown that  $\Lambda_{\ODEstate} $ is negative and independent of the initial condition,  which implies the desired conclusions.

\Cref{t:ODEstate_is_bdd}~(iii) implies uniqueness of  the invariant measure for  $\bfPsi$.    The following summarizes further conclusions;
in particular, how $\alpha$ impacts steady-state bias.

\begin{theorem}
	\label[theorem]{t:varpi}
	Consider the QSA ODE \eqref{e:QSAgen_alpha} subject to  (A0a), (A1)--(A4).
	Suppose moreover that (V4') holds. 
	Then,  
	\whamrm{(i)} 
	$\bfPsi$ admits a unique invariant measure $\upvarpi$.
	%s
	It is also an e-process that admits the steady-state realization \eqref{e:PsiState}.    Consequently,   the limits \eqref{e:biasdef},\eqref{e:norms} and \eqref{e:meanTargetBias} hold for each initial condition $\Psi_0$. 
	
	\whamrm{(ii)} 
	$\displaystyle
	\Expect  \bigl[ \barf(\ODEstate)  \bigr]
	=   \alpha  \Expect  \bigl[  \barUpupsilon(\ODEstate)  \bigr]   - \alpha^2 \Expect  \bigl[  \clW^0    (\Uppsi)  \bigr]  
	$,
	where $\Uppsi = (\ODEstate, \Upphi)  $ denotes a $\bivstate$-valued random vector with distribution $ \upvarpi$. This implies $b_\barftwo = O(\alpha)$.

	%  .
	
	\whamrm{(iii)} 
	If in addition (A0) holds, then     $\barUpupsilon(\theta) = 0 $  for all $\theta$, $b_\barftwo = O(\alpha^2)$, and
	\[
	\Expect  \bigl[ \barf(\ODEstate)  \bigr]
	=   - \alpha^2 \Expect  \bigl[  \clW^0    (\Uppsi)  \bigr]  
	\]

\end{theorem}

\smallskip

\subsubsection{QSA solidarity}

Here we establish bounds between the solutions of the QSA ODE and the mean flow \eqref{e:barf_ODE_gen_alpha}.    However, it  is convenient to eliminate the $\alpha$ in  \eqref{e:barf_ODE_gen_alpha} through a change of time-scale:  
If $\bfodestate$ is a solution to  \eqref{e:barf_ODE_gen_alpha},   then $\{ \odestate_{t/\alpha} : t\ge 0\}$ is a solution to \eqref{e:barf_ODE_gen}.    

Similarly, on denoting $\ODEstate_t^\alpha = \ODEstate_{t/\alpha} $,    the scaling is removed from \eqref{e:QSAgen_alpha}:
\begin{equation}
	\ddt\ODEstate^\alpha_t =   f(\ODEstate^\alpha_t, \qsaprobe^\alpha_t   )
	\label{e:QSAgen_alpha_time-scaled}
\end{equation}
with $\qsaprobe^\alpha_t \eqdef \qsaprobe_{t/\alpha} $.
Hence time is speeded up for the probing signal when $\alpha>0 $ is small.   

\begin{proposition}
	\label[proposition]{t:Couple_theta_local}
	Consider the QSA ODE  subject to (A1).
	There is $\alpha^0>0$, $B_0<\infty $,  and $\epsy_0 \colon\Re_+\to\Re_+$
	such that the following uniform bounds hold for respective solutions to \eqref{e:QSAgen_alpha_time-scaled} and \eqref{e:barf_ODE_gen}, with common initial condition  $\ODEstate_0 =\odestate_0=\theta_0\in\Re^d$.
	For each
	$\Phi_0\in \prstate$, 
	$T>0$,
	and $\alpha\in (0,\alpha^0]$: 
	\begin{equation}
		\|\ODEstate_T^\alpha - \odestate_T\| \leq      B_0 \exp(2 L_f T)    \bigl[  1     +      \|\theta_0\|  \bigr]  \epsy_0( \alpha)
		\label{e:Couple_theta_local}
	\end{equation}
	with  $  \epsy_0( \alpha  ) \to 0 $ as $\alpha \downarrow 0$.  
\end{proposition}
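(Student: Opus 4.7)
The plan is a classical averaging argument: compare $\bfODEstate^\alpha$ with $\bfodestate$ by telescoping through the mean vector field, invoke the uniform law of large numbers (\Cref{t:uniLLN}) after freezing the slow variable on short time windows, and close with Grönwall's inequality. First, write
\[
\ODEstate_T^\alpha - \odestate_T = E_T^\alpha + \int_0^T [\barf(\ODEstate_t^\alpha) - \barf(\odestate_t)]\,dt, \qquad E_T^\alpha \eqdef \int_0^T [f(\ODEstate_t^\alpha, \qsaprobe_t^\alpha) - \barf(\ODEstate_t^\alpha)]\,dt.
\]
The second term is bounded by $\Lip_f \int_0^T \|\ODEstate_t^\alpha - \odestate_t\|\,dt$ using (A1); the averaging term $E_T^\alpha$ is what requires real work.

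A preliminary Grönwall estimate applied directly to \eqref{e:QSAgen_alpha_time-scaled}, using the linear growth of $f$ (a consequence of (A1) and boundedness of the almost periodic probing signal $\bfqsaprobe^\alpha$), yields an a priori bound $\|\ODEstate_t^\alpha\| \leq M(1+\|\theta_0\|) e^{\Lip_f t}$ uniformly in $\alpha\in(0,\alpha^0]$. Next, partition $[0,T]$ into subintervals of width $\Delta=\Delta(\alpha)$ and freeze the slow variable: on $[k\Delta,(k+1)\Delta]$ one has $\|\ODEstate_t^\alpha - \ODEstate_{k\Delta}^\alpha\| \leq C_1(1+\|\theta_0\|) e^{\Lip_f T}\Delta$ by (A1) and the a priori bound. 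Replacing $\ODEstate_t^\alpha$ with $\ODEstate_{k\Delta}^\alpha$ in $E_T^\alpha$ contributes an error of order $C_2(1+\|\theta_0\|) e^{\Lip_f T} T\Delta$. Holding $\theta = \ODEstate_{k\Delta}^\alpha$ fixed, the change of variables $s=t/\alpha$ gives
\[
\int_{k\Delta}^{(k+1)\Delta} [f(\theta,\qsaprobe_t^\alpha) - \barf(\theta)]\,dt = \alpha \int_{k\Delta/\alpha}^{(k+1)\Delta/\alpha} [f(\theta,\qsaprobe_s) - \barf(\theta)]\,ds,
\]
and \Cref{t:uniLLN}, invoked on a window of length $\Delta/\alpha$, bounds the magnitude of this expression by $\Delta(1+\|\theta\|)\epsy_1(\Delta/\alpha)$ with $\epsy_1(S)\downarrow 0$ as $S\uparrow\infty$. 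Summing over the $T/\Delta$ subintervals and using the a priori bound yields $|E_T^\alpha| \leq C_3(1+\|\theta_0\|) e^{\Lip_f T} T [\Delta + \epsy_1(\Delta/\alpha)]$.

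To finish, choose $\Delta(\alpha) = \sqrt{\alpha}$, so that $\Delta(\alpha)\to 0$ and $\Delta(\alpha)/\alpha\to\infty$, and define $\epsy_0(\alpha) \eqdef \Delta(\alpha) + \epsy_1(\Delta(\alpha)/\alpha)$. Combining all of the above gives
\[
\|\ODEstate_T^\alpha - \odestate_T\| \leq C_4(1+\|\theta_0\|)\epsy_0(\alpha) e^{\Lip_f T} + \Lip_f \int_0^T \|\ODEstate_t^\alpha - \odestate_t\|\,dt,
\]
and Grönwall's inequality contributes a second factor of $e^{\Lip_f T}$, producing the claimed bound with exponent $2\Lip_f T$. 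The main technical subtlety is the invocation of \Cref{t:uniLLN} on windows of the form $[k\Delta/\alpha,(k+1)\Delta/\alpha]$: the statement as given concerns integration from $0$, but the supremum over initial probing conditions $z_0$ in \Cref{t:uniLLN} absorbs any time shift of $\bfqsaprobe$ uniformly, so the bound transfers to shifted windows after identifying $\Phi_{k\Delta/\alpha}$ as the new initial condition. The secondary balancing act is choosing $\Delta(\alpha)$ so that both the freezing error ($O(\Delta)$) and the averaging error ($O(\epsy_1(\Delta/\alpha))$) vanish and can be packaged into a single $\epsy_0(\alpha)\downarrow 0$.
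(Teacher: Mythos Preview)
Your argument is correct and is the standard averaging-principle proof: partition into $\Delta$-windows, freeze the slow variable, apply the uniform LLN on fast-time windows of length $\Delta/\alpha$, and close with Gr\"onwall, choosing $\Delta=\alpha^\rho$ so both error sources vanish. The paper follows the same skeleton but organizes it differently: rather than isolating a single averaging error $E_T^\alpha$ and applying one global Gr\"onwall, it restarts an auxiliary mean-flow trajectory $\odestate^n$ at each grid point $\tau_n$ from the QSA value $\ODEstate^\alpha_{\tau_n}$, bounds the one-step discrepancy $\|\ODEstate^\alpha_{\tau_{n+1}}-\odestate^n_{\tau_{n+1}}\|$ (\Cref{t:shortSolidarity}), and then recursively controls $\|\odestate^n_{\tau_n}-\odestate_{\tau_n}\|$ via Gr\"onwall between mean-flow solutions (\Cref{t:BG,t:shortSolidarity2}). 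Your direct decomposition is more economical for this statement; the paper's restarted-flow structure is more modular and gives slightly finer local control at each step, which can be convenient when the argument is reused. One cosmetic point: your bound on $|E_T^\alpha|$ carries a factor of $T$ that silently disappears before the final Gr\"onwall; this does not affect the conclusion $\epsy_0(\alpha)\to 0$, but the exact constant $2L_f$ in the exponent then needs the harmless absorption $T\le (e L_f)^{-1}e^{L_f T}$ or similar.
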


\begin{figure}[h]
	\centering
	\includegraphics[width= .6\hsize]{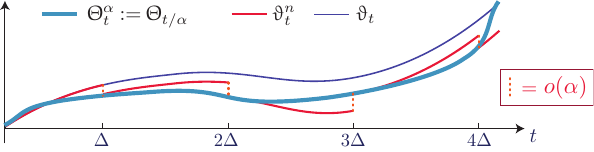}  
	\caption{Solidarity between the time-scaled QSA ODE and the mean flow. }
	\label{fig:SolidarityFixedGain}
\end{figure}

The proof of the proposition is contained in the Appendix. 
The main idea is illustrated in \Cref{fig:SolidarityFixedGain}:  the time axis is partitioned into intervals of width $\Delta>0$ on which $\ODEstate_t^\alpha$ is approximately constant.     With 
$\tau_n = n\Delta$,   we let   $\{ \odestate^n_t :    t \ge   \tau_{n} \}$ denote the solution to \eqref{e:barf_ODE_gen}  with $\odestate^n_{\tau_n} = \ODEstate^\alpha_{\tau_n}$.    The proof follows in three steps:  
1.\  bounds on the error $\|   \odestate^n_t - \ODEstate^\alpha_t \|$ for $t\in [\tau_n, \tau_{n+1}]$;
2.\     bounds on  $\|   \odestate^n_t - \odestate_t \|$ for all $t$ and $n$,  and
3.\  combine 1 and 2 (with minor additional approximations) to complete the proof.

\subsubsection{ODE@\texorpdfstring{$\infty$}{infinity}}

We now explain how to establish (V4') based on the ODE@$\infty$ introduced in \Cref{s:QSAstable}, with vector field $
\barf_\infty $ defined in 	\eqref{e:barfinfty_QSA}.  
The limit exists in many applications, and is often much simpler than $\barf$.  In particular, $\barf_\infty(0) =0$ (the origin is a stationary point),  and the function is radially linear:  $\barf_\infty (c\theta)  = c\barf_\infty (\theta) $ for any $\theta\in\Re^d$ and $c\ge 0$.  Based on these properties it is known that the origin is locally stable in the sense of Lyapunov if and only if it is globally exponentially asymptotically stable \cite{bormey00a}.

The following result follows from arguments leading to the proof of   \cite[Prop.~4.22]{CSRL}:
\begin{theorem}
	\label[theorem]{t:V4TfromBM}
	Suppose that the limit \eqref{e:barfinfty_QSA} exists, and suppose that the origin is  asymptotically stable for the  ODE@$\infty$.   Then, there is a Lipschitz continuous solution to \eqref{e:V4T}.  \qed
\end{theorem}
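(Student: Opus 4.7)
The plan is to construct $V(\theta) = \|\theta\|$ (truncated below by 1 if needed to place it in $[1,\infty)$) and verify the drift condition \eqref{e:V4T} by comparing the mean flow $\ddt \odestate_t = \barf(\odestate_t)$ to its rescaled version far from the origin. The Lipschitz and linear growth requirements in \eqref{e:VLip} are then immediate with $L_V = 1$, so the entire content lies in deriving the drift \eqref{e:V4T} from the hypothesized asymptotic stability at the origin of the ODE@$\infty$.

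First, I would invoke the standard consequence of radial linearity cited in the excerpt: because $\barf_\infty(c\theta) = c\barf_\infty(\theta)$ and $\barf_\infty(\Zero) = \Zero$, local asymptotic stability at the origin upgrades to global exponential asymptotic stability \cite{bormey00a}. Thus there exist constants $b_\infty,\delta_\infty > 0$ such that every solution $\odestate^\infty$ of $\ddt \odestate^\infty_t = \barf_\infty(\odestate^\infty_t)$ obeys $\|\odestate^\infty_t\| \le b_\infty \|\odestate^\infty_0\| e^{-\delta_\infty t}$. Fix a horizon $T > 0$ large enough that $b_\infty e^{-\delta_\infty T} \le 1/4$.

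Next, I would transport this contraction to the mean flow by rescaling. Given $\theta_0$ with $r \eqdef \|\theta_0\| > 0$, let $\odestate_t$ solve the mean flow with $\odestate_0 = \theta_0$ and define $\phi^r_t \eqdef r^{-1} \odestate_t$, which satisfies $\ddt \phi^r_t = \barf^r(\phi^r_t)$, $\phi^r_0 = \theta_0/r$. Let $\phi^\infty_t$ solve the ODE@$\infty$ with the same initial condition $\phi^\infty_0 = \theta_0/r$, so $\|\phi^\infty_T\| \le 1/4$. Writing
\[
\phi^r_t - \phi^\infty_t = \int_0^t \bigl[\barf^r(\phi^r_s) - \barf^r(\phi^\infty_s)\bigr]\,ds + \int_0^t \bigl[\barf^r(\phi^\infty_s) - \barf_\infty(\phi^\infty_s)\bigr]\,ds,
\]
using that each $\barf^r$ inherits the Lipschitz constant $L_f$ from $\barf$, a Gronwall bound gives
\[
\|\phi^r_T - \phi^\infty_T\| \le e^{L_f T} \int_0^T \|\barf^r(\phi^\infty_s) - \barf_\infty(\phi^\infty_s)\|\,ds.
\]
Since the trajectories $\{\phi^\infty_s : 0 \le s \le T,\ \|\phi^\infty_0\| = 1\}$ all remain in the compact ball of radius $b_\infty$, and the equi-Lipschitz family $\{\barf^r\}$ converges pointwise (hence uniformly on compacts) to $\barf_\infty$, the right-hand side tends to zero as $r \to \infty$, \emph{uniformly} over the unit sphere of initial conditions $\phi^r_0$. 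Choose $r_0$ such that the error is at most $1/4$ for $r \ge r_0$; then $\|\phi^r_T\| \le 1/2$, i.e., $\|\odestate_T\| \le \half\|\odestate_0\|$ whenever $\|\odestate_0\| \ge r_0$.

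Finally, by time-homogeneity of the mean flow, the same bound applies on any interval $[\tau, \tau + T]$, so with $V(\theta) = \|\theta\|$,
\[
V(\odestate_{\tau+T}) - V(\odestate_\tau) = \|\odestate_{\tau+T}\| - \|\odestate_\tau\| \le -\tfrac{1}{2}\|\odestate_\tau\| \quad \text{whenever } \|\odestate_\tau\| \ge r_0,
\]
which is precisely \eqref{e:V4T} with $\delta_1 = \min(1/2,\ 1/r_0)$. The main obstacle in the argument is the Gronwall/uniform-convergence step: one needs that the pointwise convergence $\barf^r \to \barf_\infty$ be upgraded to uniform convergence over the relevant compact set and that this upgrade be uniform over the unit sphere of initial directions. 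Both follow from equi-Lipschitzness of $\{\barf^r\}$ together with the a priori bound $\|\phi^\infty_s\| \le b_\infty$ on the comparison trajectory, but these are the delicate pieces that must be handled carefully.
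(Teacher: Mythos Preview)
Your proof is correct and complete; the Gr{\"o}nwall step and the uniform-on-compacts convergence of $\barf^r \to \barf_\infty$ are handled properly via equi-Lipschitzness.

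However, your approach differs from the paper's. The paper constructs the Lyapunov function as an integral along trajectories of the ODE@$\infty$,
\[
V(\theta) = \int_0^T \|\odestate^\infty_t\|\,dt, \qquad \odestate^\infty_0 = \theta,
\]
and then shows this $V$ satisfies (V4') for the ODE@$\infty$ directly (using the semigroup property: $V(\odestate^\infty_T) = \int_0^T \|\odestate^\infty_{t+T}\|\,dt \le \half V(\theta)$ once $T$ is chosen so that $\|\odestate^\infty_t\| \le \half\|\odestate^\infty_0\|$ for $t \ge T$). The transfer to the actual mean flow is then delegated to a comparison bound $\|\odestate^\infty_T - \odestate_T\| = \epsy_0(\|\theta\|)\|\theta\|$ cited from \cite{bormey00a}. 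You instead take $V(\theta) = \|\theta\|$ and carry out the comparison explicitly via rescaling and Gr{\"o}nwall, which is more elementary and self-contained. The paper's integral construction is a standard device that can yield sharper drift constants and adapts more readily when one also needs (V4) in differential form, but for the bare statement of this theorem your route is shorter.
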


The proof of the theorem is contained in the Appendix, where it is shown that the following function is a solution to \eqref{e:V4T},
similar to \eqref{e:notBMV}:
\begin{equation}
	V(\theta) = \int_0^{T}  \| \odestate_t^\infty \|\,dt  \,,\qquad   \odestate_0^\infty =\theta\in\Re^d
	\label{e:BMV}
\end{equation}
Lipschitz continuity of both $\barf_\infty$ and $V$ is inherited from $\barf$.

\smallskip

To illustrate application of the theorem, consider the gradient flow for which $\barf =   -  \nabla \Obj$:
\begin{equation}
	\ddt \odestate =     -  \nabla \Obj\, (\odestate)
	\label{e:GradFlow}
\end{equation}
with $\Obj\colon\Re^d\to\Re$,   with minimum denoted $\Obj^\opt$.
The existence of $\barf_\infty$ implies that the following limit also holds: 
\begin{equation}
	\Obj_\infty (\theta) \eqdef \lim_{r \to \infty }  r^{-2}  \Obj(r\theta)    \,, \qquad \theta\in\Re^d 
	\label{e:ObjInfinity}
\end{equation} 
This follows from the representation
\[
\Obj(\theta) -\Obj( 0) = \int_0^1 \ddt \Obj(t\theta) \, dt  = \int_0^1 \theta^\transpose\nabla \Obj\, (t\theta) \, dt   
\]
which implies the representation $\barf_\infty(\theta) = - \nabla \Obj_\infty(\theta)$ and
\begin{equation}
	\Obj_\infty (\theta)   =  - \int_0^1 \theta^\transpose  \nabla \Obj_\infty(t\theta) \, dt   \,, \qquad \theta\in\Re^d 
	\label{e:ObjInfinity2}
\end{equation}

The gradient flow is ultimately bounded under mild assumptions:

\begin{proposition}
	\label[proposition]{t:gradFlowODE}
	Suppose that $\Obj_\infty$ is  $C^1$,   and $\Obj_\infty(\theta)>0$   for $\theta\neq 0$.     Then the ODE@$\infty$ for the gradient flow is exponentially asymptotically stable:  for some $B<\infty$, $\delta>0$, and any initial $\odestate^\infty_0\in\Re^d$,
	\begin{equation}
		\|\odestate^\infty_t \|^2 %  \le b_1^{-1}  \Obj_\infty (\odestate^\infty_t)
		\le  B   \|\odestate^\infty_0 \|^2  e^{ -  \delta    t} \,,\qquad \textit{for $t\ge 0$.}
		\label{e:expASgradFlowODE}
	\end{equation}
	Consequently,  the gradient flow itself is ultimately bounded.
\end{proposition}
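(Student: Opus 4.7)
The plan is to use $\Obj_\infty$ itself as a Lyapunov function to establish exponential asymptotic stability of the ODE@$\infty$, and then invoke \Cref{t:V4TfromBM} to transfer ultimate boundedness back to the gradient flow~\eqref{e:GradFlow}. The structural pivot is homogeneity: since $\barf_\infty = -\nabla \Obj_\infty$ is positively homogeneous of degree one (by the radial linearity of $\barf_\infty$ already noted in the text) and $\Obj_\infty(0) = 0$, the function $\Obj_\infty$ is positively homogeneous of degree two, and Euler's identity gives $\theta^\transpose \nabla \Obj_\infty(\theta) = 2 \Obj_\infty(\theta)$ for all $\theta$.

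Next I would use compactness of the unit sphere $S^{d-1}$ to extract two-sided bounds $m \|\theta\|^2 \le \Obj_\infty(\theta) \le M \|\theta\|^2$ with $0 < m \le M < \infty$, directly from the hypothesis that $\Obj_\infty$ is $C^1$ and strictly positive off the origin. The crucial companion estimate is a matching lower bound on $\|\nabla \Obj_\infty\|^2$, and this is the one place where positivity of $\Obj_\infty$ really bites: Euler's identity rules out critical points of $\Obj_\infty$ off the origin, since a zero gradient at some $\theta_0 \ne 0$ would force $\Obj_\infty(\theta_0) = 0$. Hence $\|\nabla \Obj_\infty\|^2$ is strictly positive on $S^{d-1}$, and continuity together with degree-one homogeneity of $\nabla \Obj_\infty$ delivers the uniform bound $\|\nabla \Obj_\infty(\theta)\|^2 \ge c\, \Obj_\infty(\theta)$ for some $c>0$. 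I expect this to be the main (though routine) technical step.

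A one-line Gronwall argument then closes out the stability claim for the ODE@$\infty$. Along any solution,
\[
\frac{d}{dt} \Obj_\infty(\odestate^\infty_t) \;=\; \nabla \Obj_\infty(\odestate^\infty_t)^\transpose \barf_\infty(\odestate^\infty_t) \;=\; -\|\nabla \Obj_\infty(\odestate^\infty_t)\|^2 \;\le\; -c\,\Obj_\infty(\odestate^\infty_t),
\]
so $\Obj_\infty(\odestate^\infty_t)$ decays exponentially, and the two-sided norm comparison from the previous paragraph converts this to \eqref{e:expASgradFlowODE} with $B = M/m$ and $\delta = c$.

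Finally, to deduce ultimate boundedness of the gradient flow itself, I would feed this exponential stability of the ODE@$\infty$ into \Cref{t:V4TfromBM} to obtain a Lipschitz function $V$ satisfying the drift condition (V4'). The drift inequality $V(\odestate_{\tau+T}) - V(\odestate_\tau) \le -\delta_1 \|\odestate_\tau\|$ for $\|\odestate_\tau\| > \delta_1^{-1}$, together with the coercive lower bound $V(\theta) \ge \|\theta\|$ for large $\|\theta\|$, forces every trajectory of $\ddt\odestate = -\nabla\Obj(\odestate)$ to enter $\{\|\theta\|\le \delta_1^{-1}\}$ in finite time and remain in a bounded sublevel set of $V$ thereafter, which is precisely the required ultimate boundedness.
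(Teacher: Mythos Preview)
Your proposal is correct and follows essentially the same approach as the paper: use $\Obj_\infty$ as a Lyapunov function, exploit degree-two homogeneity to get two-sided quadratic bounds on $\Obj_\infty$ and $\|\nabla\Obj_\infty\|^2$ via compactness of the sphere, and conclude exponential decay by Gronwall. Your Euler-identity argument that $\nabla\Obj_\infty$ cannot vanish off the origin is a nice explicit justification of a point the paper leaves implicit, and your invocation of \Cref{t:V4TfromBM} for the ``consequently'' clause is exactly the intended route.
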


\begin{proof}
	Both $\Obj_\infty$  and $\| \nabla \Obj_\infty \|^2$ are bounded above and below by quadratic functions---this follows from continuity and the radial homogeneity properties
	\[
	\Obj_\infty( r \theta)  = r^2\Obj_\infty(  \theta) \,,\qquad   \nabla \Obj_\infty\, ( r \theta)  = r\nabla\Obj_\infty\, (  \theta)\,,\qquad \theta\in\Re^d\,, \  r>0
	\]
	Hence the following constants exist as finite positive numbers,
	\[
	\kappa_+ = \max \frac{\Obj_\infty(\theta)} {\| \theta\|^2} 
	\,,\quad 
	\kappa_- = \min \frac{\Obj_\infty(\theta)} {\| \theta\|^2}  \,, 
	\qquad
	\chi_+ = \max \frac{ \| \nabla\Obj_\infty\, (  \theta) \|^2} {\| \theta\|^2}  \,,\quad 
	\chi_- = \min \frac{ \| \nabla\Obj_\infty\, (  \theta) \|^2} {\| \theta\|^2}  \,,  
	\]
	where the min and max are over non-zero $\theta\in\Re^d$.

	We have $\ddt \Obj_\infty( \odestate^\infty) = -  \| \nabla \Obj_\infty\, ( \odestate^\infty) \|^2$,  and from the above definitions we conclude that  $\ddt \Obj_\infty( \odestate^\infty) = - \delta  \Obj_\infty( \odestate^\infty)$ with $\delta = \chi_- /\kappa_+$.
	Hence $ \Obj_\infty( \odestate^\infty_t) \le e^{-\delta t}  \Obj_\infty( \odestate^\infty_0)$,  which completes the proof using 
	$B = \kappa_+/{\kappa_-}$.  
\end{proof}

\subsection{Implications to filter design}
\label{s:filterDesign}

%%%%%%

The matrix $A^*  =  \barA(\theta^*)$ is invertible under (A4).
In this case we define a new function $\barY\colon\Re^d\to\Re^d$ via
\begin{equation}
 \barY  =[A^*]^{-1} \barUpupsilon
\label{e:barY}
\end{equation}
along with the fixed vectors   $\barUpupsilon^* = \barUpupsilon(\theta^*)$  and $\barY^* = [A^*]^{-1} \barUpupsilon^*$. 
This notation is used for approximations obtained in  \cite{laumey22,laumey22e,CSRL} for vanishing gain algorithms.

For reasons that will become clear in the proof of \Cref{t:Couple_theta} that follows,   a filter that obtains AAD of order $O(\alpha^2) $  can be chosen to be of second order, with bandwidth approximately equal to $\alpha$.   
It must also have unity DC gain, which brings us to the following special form for the filtered estimates:
\begin{equation}
 \tfrac{d^2}{dt^2}  \ODEstateF_t 
+
2\gamma\zeta\tfrac{d}{dt}  \ODEstateF_t
+\gamma^2\ODEstateF_t  = \gamma^2 \ODEstate_t
\label{e:GVSOS}
\end{equation}
It is the unity gain that prevents us from eliminating the error inherited from $\barUpupsilon$,  illustrated using $\barY^\ocp$ in
\Cref{fig:QSArep}.   

If $\barY^*=0$, then the coefficients $\gamma$ and $\zeta$ can be designed to reduce AAD dramatically.
Theory predicts that a first order filter is not effective in general.   

In the following companion to \Cref{t:BigQSA} we consider the family of QSA ODEs with fixed gain  $\alpha$,
and let the natural frequency $\gamma$ scale with $\alpha$.

\begin{subequations}
\begin{theorem}
\label[theorem]{t:Couple_theta}
Consider the solution to the QSA ODE with fixed gain.  
For each $\alpha>0$,   the filtered estimates are obtained using \eqref{e:GVSOS},  in which
the value of $\zeta \in (0,1)$ is fixed,  and $\gamma =\eta \alpha$ with $\eta>0$ also fixed.

Subject to  Assumptions~(A1)--(A4) and the Lyapunov bound (V4'), the following conclusions hold for $0<\alpha\le \alpha^0$:
\begin{align}
	\|\ODEstate_t - \theta^*\|  & \leq O(\alpha)  + o(1)  
\label{e:fixedGain1}
\\	
\|\ODEstateF_t - \theta^*\|  & \leq O(\alpha \gamma + \alpha^2)  + O(\alpha \|\barUpupsilon^*\|) + o(1)   
\label{e:fixedGain1PR}
\end{align}
with $\barUpupsilon^* =\barUpupsilon(\theta^*)$. 
Consequently,   the AAD for   $\ODEstateF$  is  bounded by $O(\alpha^2)$ provided    $\barUpupsilon^* = 0$.
\end{theorem}
 \label{e:BigFilter}
\end{subequations} 

 The full proof is postponed to \Cref{s:AppStability}.  
The proof of \eqref{e:fixedGain1} follows from two conclusions from the given assumptions:  that the ODE \eqref{e:barf_ODE_gen} is exponentially asymptotically stable,  by \Cref{t:qsaprobeExpAS},   which implies the existence of a smooth Lyapunov function for the mean flow.
One solution is
\begin{equation}
	V(\theta) = \int_0^{T} e^{\delta_0 t} \| \odestate_t -\theta^* \|^2\,dt  \,,\qquad   \odestate_0 =\theta\in\Re^d
	\label{e:notBMV}
\end{equation}
with $\delta_0>0$ chosen so that the integrand is vanishing,  and $T>0$ sufficiently large so that  $e^{\delta_0 T} \| \odestate_T -\theta^* \|  \le \half  \| \theta -\theta^* \| $ for any $\theta$.  
This Lyapunov function is then applied to  the representation  \eqref{e:prematureMeanflow}.       The proof of \eqref{e:fixedGain1PR} is then simpler since we can apply \eqref{e:fixedGain1} to justify linearization around $\theta^*$,  applying an entirely linear systems theory analysis.  
%\notes{}

The approximations \eqref{e:BigFilter}  imply bounds on the absolute deviation of parameter estimates,  and hence the AAD.
Bounds on bias and variance also  follow as corollaries to \Cref{t:Couple_theta}.

\begin{corollary}%[Bias and variance]
	\label[corollary]{t:asympt_bias} 
	Under the assumptions of \Cref{t:Couple_theta},
	
	\begin{subequations}
		\begin{romannum}
		\item
		The asymptotic bias and variance \eqref{e:biasdef} admit the bounds,  
		\begin{align}
			%\begin{aligned}
			\bias_{\ODEstate}  &= O(\alpha)\,,
			\quad 
			\sigma^2_{\ODEstate} = O(\alpha^2)  \,,
			\label{e:part1corr}
			\\
			\bias_{\ODEstateF} &= O(\alpha) \, ,
			\quad \sigma^2_{\ODEstateF} = O(\alpha^2)  \, ,
			\label{e:part1corrF}
			%\end{aligned}
		\end{align}	
		\item
		If in addition (A0) holds, then  % the AAD for   $\ODEstateF$  is  bounded by $O(\alpha^2)$ and
		\begin{align}
			\bias_{\ODEstate} &= O(\alpha^2)  \,,
			\quad  \sigma^2_{\ODEstate} = O(\alpha^2)  \, , 
			\label{e:part2corr}
			\\
			\bias_{\ODEstateF} &= O(\alpha^2)\,,
			\quad  \sigma^2_{\ODEstateF} = O(\alpha^4) \, .
			\label{e:part2corrF}
		\end{align}
	\end{romannum}
	\label{e:Cor_bias}
	\end{subequations}
\qed
\end{corollary}

Assumption~(A0) has the largest impact on bias and variance.    Equation~\eqref{e:part2corr}
tells us that the variance is of order $O(\alpha^2)$ subject to this restriction on frequencies,
which is remarkable when compared with standard results from SA theory 
(see  \eqref{e:BM} and discussion that follows).    
Filtering brings the variance down to   $O(\alpha^4)$;   a restatement of the second bound in \eqref{e:part2corrF}.

\subsection{Examples}

\subsubsection{Choice of Frequencies}
  \begin{subequations}
	
%\paragraph{Example }
Two linear QSA ODEs will be used to illustrate the impact of $\barUpupsilon^*$ on AAD:  
	\begin{align}
		\ddt \ODEstate_t &= \alpha[A_t\ODEstate_t + 2\sin(\omega t)+1]  
		\label{e:lin_barY0}
		\\
		\ddt \ODEstate_t &= \alpha[   A_t\ODEstate_t + 2\cos(\omega t)+1]  
		\label{e:lin_barYnot}
\end{align}  
where  $A_t = -(1+\sin(\omega t))$ with $\omega>0$. 
Assumption (A0b) holds for \eqref{e:lin_barY0} with $K=1$  in \eqref{e:qsaprobe0}, using
 $\qsaprobe^0_t = \cos(\omega t)$. 
 In the second case \eqref{e:lin_barYnot}, Assumption (A0b) is \textit{violated}:  any realization of $\qsaprobe^0_t $
  will require $K=2$ with $\omega_1=\omega_2$.

We   obtain a formula for  $\barUpupsilon^* $ in each case, based on the following observations:  
	\begin{romannum}
		\item They share the common mean vector field $\barf(\theta) =-\theta+1$, so that $\theta^\ocp =1$.
		
		\item  They also share the same linearizations:  $A(\theta,\qsaprobe) = \partial_\theta f(\theta,\qsaprobe) =  -(1+\sin(\omega t)) $  and $A^\ocp=-1$.   
		
Consequently, $\barY(\theta) = [A^\ocp]^{-1} \barUpupsilon(\theta) = - \barUpupsilon(\theta) $,  and
for any $\theta$ and $t$,
\[ 
 \haA( \theta, \Phi_t )   =  - \omega^{-1} \cos(\omega t)   
\]
Observe that  $ \int \haA( \theta, z )  \,\uppi(dz) = \Zero$ 
and $\ddt  \haA( \theta, \Phi_t ) = -[  A (\theta,\qsaprobe_t) - \barA(\theta)]$, as required.

		\item  A crucial difference is the steady-state apparent noise:  $f(\theta^\ocp, \qsaprobe_t) =  \sin(\omega t)$ for  \eqref{e:lin_barY0},  and 
		$f(\theta^\ocp, \qsaprobe_t) = - \sin(\omega t) + 2\cos( \omega t)$ for  \eqref{e:lin_barYnot}. 
	\end{romannum}
From the \eqref{e:haEverythingElse}  we find that $\barUpupsilon(\theta) = \barUpupsilon^\ocp$ (independent of $\theta$):  
\[
	\begin{aligned}
-\barY(\theta)  = 
		\barUpupsilon(\theta)  & \eqdef 
		- \lim_{T\to\infty}\frac{1}{T} \int_0^T  \haA( \theta, \Phi_t ) \cdot  f(\theta, \qsaprobe_t)  \, dt
		\\
		&
		= 
		\begin{cases}
			\displaystyle
			\ \ 
			%-\lim_{T\to\infty}\frac{1}{T} \int_0^T  2 \cos(\omega t)  \cdot \sin( \omega t)  \, dt  =
			  \   0    &   \text{Model    \eqref{e:lin_barY0}}
			\\
			\displaystyle
			\ \ 
			-\lim_{T\to\infty}\frac{1}{T} \int_0^T \{ -\omega^{-1}  \cos(\omega t) \} \cdot \{ 2\cos(\omega t) \} \, dt  =  \frac{1}{\omega}  \quad  &   \text{Model    \eqref{e:lin_barYnot}}
		\end{cases}
\end{aligned} 
\] 
In each case,  we have used the fact that  $ \haA( \theta, \Phi_t )$ and $ \haA( \theta, \Phi_t )    \cdot \sin(\omega t)$ each have time average equal to zero.
	
\label{e:TwoLinExamples}
\end{subequations}

\wham{Simulation Setup} 
We are interested in comparing AAD for $\bfODEstate$ with and without filtering.
The special case $\omega =  0.1$ was considered, 
many  gains were tested in the range $\alpha \in [10^{-3}, 0.9]$,
and in each experiment the solution to the QSA ODE was approximated
using an Euler scheme with sampling time of $0.1$~sec. 
 
\begin{subequations}
 Data $\{\ODEstate_t\}$ for each choice of $\alpha$ was then used to obtain the   two filtered estimates:
\begin{equation}
{\ODEstate}^{1 \text{\tiny\sf F}}_t = \int^t_0 h^1_{t - \tau} \ODEstate_\tau\,d\tau, \qquad {\ODEstate}^{2 \text{\tiny\sf F}}_t = \int^t_0 h^2_{t - \tau} \ODEstate_\tau\,d\tau
\end{equation}
where $h^1_t$ and $h^2_t$ have Laplace transforms:
\begin{equation}
	H_1(s) = \frac{\gamma}{s+\gamma}, \qquad  H_2(s) = \frac{\gamma^2}{s^2+2\zeta \gamma s + \gamma^2 }
	\,,
	\qquad  \textit{ with $\gamma = \alpha$, $\zeta =0.8$. }
	\label{e:filter_numerics}
\end{equation}
\label{e:filters}
\end{subequations}
The steady-state $L_1$ norm of the error was approximated using the Monte-Carlo estimate,
\begin{equation}
\| \ODEstate_{\alpha}\|_{L_1} \eqdef  \frac{1}{T - T_0} \int^T_{T_0}  |\ODEstate_\tau -\theta^* | \, d\tau
\label{e:AAD_MC}
\end{equation}
taking  $T_0 = 0.8 T$, so that only the final $20\%$ of the run was included.  This was repeated to obtain $ \|{\ODEstate}^{1 \text{\tiny\sf F}}_\alpha \|_{L_1}$ and $\|{\ODEstate}^{2 \text{\tiny\sf F}}_\alpha \|_{L_1}$ for each value of $\alpha$ considered. 
%\notes{}

\begin{figure}[h]
	\centering
	\includegraphics[width=1\hsize]{./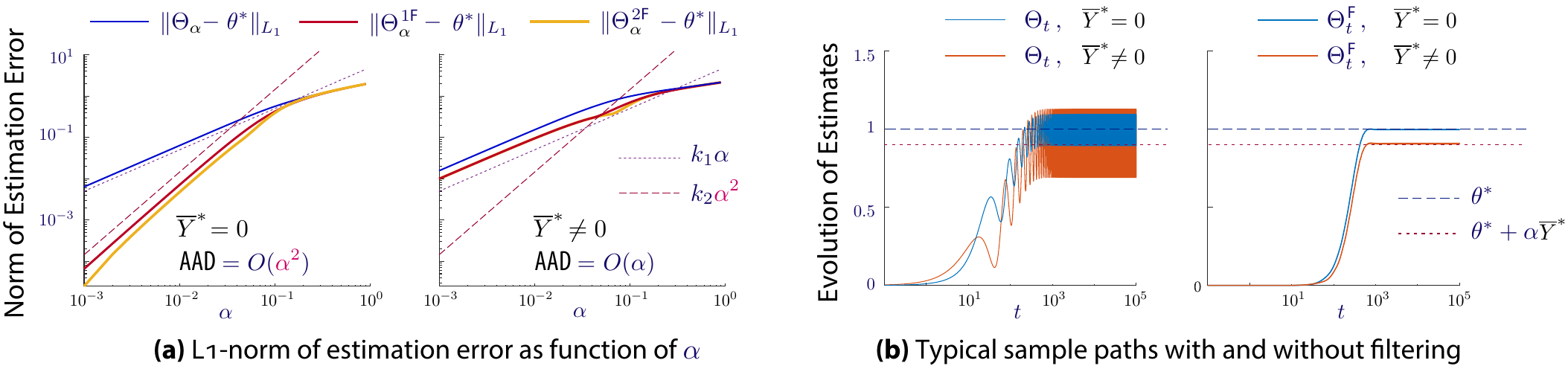}
	\caption{AAD as a function of $\alpha$ for different filtering techniques (left).   Impact of $\barY^*$ on  estimation error (right).}%	
	\label{fig:alpha2bias}
\end{figure}

\wham{Results}
The final estimates for each case are shown plotted against $\alpha$ on a logarithmic scale in \Cref{fig:alpha2bias}~(a).
Also shown for comparison with expected bounds are plots of the linear functions $r_1(\alpha) = k_1 \alpha$, $r_2(\alpha) = k_2 \alpha^2$. The constants $k_1, k_2$ were chosen to aid comparison. The results agree with \Cref{t:Couple_theta}. In particular, we see in \Cref{fig:alpha2bias} that the term $\alpha\barY^*$ dominates the estimation error when $\barY^* \neq 0$. In this case, filtering has no improvement on reducing AAD of order $O(\alpha)$. When $\barY^*=0$, \Cref{fig:alpha2bias}~(a) shows that  filtering reduces AAD to $O(\alpha^2)$ for  $\alpha<0.1$,   and \textit{this is observed for both first and second order low pass filters}.  An explanation for the similar performance is contained in \Cref{s:Linsys_Exp}.

The evolution of the process $\{{\ODEstate}^{2 \text{\tiny\sf F}}_t\}$ for the specific case of $\alpha =0.01$ is shown in \Cref{fig:alpha2bias}~(b).    The volatility of $\bfODEstate$ is massive in each case, and reduced dramatically with filtering.

These results illustrate the importance of carefully designing the probing signal: the inclusion of a phase shift in one of the entries of $\bfqsaprobe$ might appear harmless. In fact,  this small change results in significant estimation error---10\%\ in this example.

\subsubsection{Linear Systems Analysis}
\label{s:Linsys_Exp}
The next example was designed to illustrate \Cref{t:Couple_theta},  and explain why a first order filter may be effective in some cases such as the results shown in
\Cref{fig:alpha2bias}.

Consider the time-homogeneous linear system with almost periodic input,   
\begin{equation}
	\ddt x_t =   \alpha A^*   x_t   		 +  \alpha \clW^*_t  
	\label{e:lin_simple}
\end{equation}
The reason for the change in notation is that $x_t$ is interpreted as an approximation of  $\ODEstate_t -\theta^\ocp$,   in which case $\clW^*_t \eqdef \clW(\theta^*,\Phi_t)$ with $\clW_t$ defined in \eqref{e:AllTheNoise}.    An approximation of this form is used in the proof of \Cref{t:Couple_theta}.

%The 

Let $X(s)$, $W(s)$ denote the respective Laplace transforms of the state and input in \eqref{e:lin_simple},  
and $W^i(s)$ the transforms of the components of $\clW_t$ shown in \Cref{t:PMF}.
They are related by
\[
X(s)  =   \alpha  [Is -  \alpha A^* ] ^{-1}  W(s)   =   \alpha  [Is -  \alpha A^* ]^{-1}  \bigl(  \alpha^2  W^0(s)   + \alpha s  W^1(s)   + s^2  W^2(s)   \bigr) 
\]
and using a superscript ``{\sf F}'' for the filtered signals,  
\[
\XF(s)  =   \alpha  [Is -  \alpha A^* ] ^{-1}  \WF{}(s)   =   \alpha  [Is -  \alpha A^* ]^{-1}  \bigl(  \alpha^2   \WF{0}(s)   + \alpha s   \WF{1}(s)   + s^2   \WF{2}(s)   \bigr) 
\]

The assumptions on the  filter \eqref{e:GVSOS} imposed in  \Cref{t:Couple_theta} ensure that $ s^2  \WF{2} (s) $ and $    s  \WF{1} (s)  $  are attenuated to obtain the desired error bounds---see  \Cref{t:FilterDesign} for details.
% Hence a 

The matrix-valued transfer function $[Is - \alpha A^*]^{-1}  $ will attenuate these signals if $\alpha$ is small, and the signals are band-limited.     In the experiment illustrated in \Cref{fig:alpha2bias},   a second order filter is not needed because $\alpha A^*= - \alpha$,  and the spectrum of $\clW_t^*$ is discrete.

\subsubsection{Gradient Descent}

The next example illustrates \Cref{t:Couple_theta}.

Minimization of a scalar-valued objective $\Obj: \Re^d \to \Re_+ $ might be achieved using the   following ``quasi stochastic gradient descent'' ODE:
\begin{equation}
	\ddt \ODEstate_t = -\alpha [\qsaprobe_t \qsaprobe^\transpose_t\nabla\Obj(\ODEstate_t) + \beta \qsaprobe_t]
	\label{e:SGD}
\end{equation}
This is a version of QSA with      $\barf(\theta) = -\Sigmaqsa \nabla\Obj(\theta) $, and $\Sigmaqsa = \lim_{T\to\infty} \frac{1}{T} \int^T_0 \qsaprobe_t \qsaprobe^\transpose_t\,dt$.    The introduction of quasi-randomness is for application to non-convex objective functions.  In particular, the zero mean signal $ \beta \qsaprobe_t$ will ensure the ODE does not converge to a saddle point, and may help to avoid convergence to local minima \cite{gehuajinyua15,orvkerprobacluc22}.

The results that follow are based on the
\textit{Rosenbrock} objective \cite{simulationlib}, 
\begin{equation}
	\Obj(\theta) =100(\theta_2-\theta_1^2)^2+(1-\theta_1)^2
	\label{e:Rosen}
\end{equation}

\wham{Simulation setup:}

The ODE \eqref{e:SGD} was approximated through an Euler scheme with sampling time of $0.1$~sec. 
The probing signal was $\qsaprobe_t = [\sin(t/4),\sin(t/e^2)]^\transpose$ and the value $\beta =5$ in \eqref{e:SGD} 
was selected by trial and error.

Gains were chosen in the range $\alpha \in [10^{-4}, 10^{-2}]$.
For each $\alpha$, the data $\{\ODEstate_t\}$ was used to obtain the filtered estimates $\{\ODEstate^{1 \text{\tiny\sf F}}_t ,\ODEstate^{2 \text{\tiny\sf F}}_t\}$ through \eqref{e:filters} with $\gamma = \alpha$ and $\zeta = 0.8$. The steady-state $L_1$ norm of the error (i.e. the AAD) was then approximated exactly as   in \eqref{e:AAD_MC}.
%\b

\begin{wrapfigure}[15]{r}{0.5\textwidth}	
	\centering
	\includegraphics[width=1\hsize]{./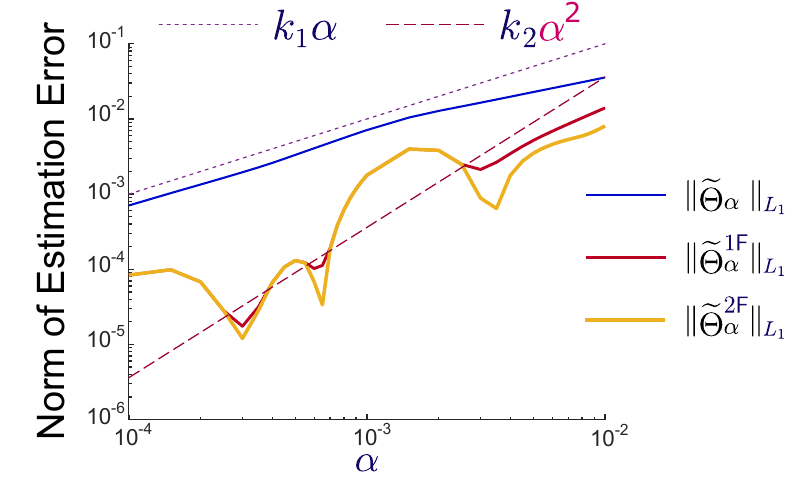}
	
	\caption{Optimization of the Rosenbrock objective: AAD as a function of $\alpha$ for different filtering techniques.}%
	\label{fig:SGDCamel}
\end{wrapfigure} 

\wham{Results:}  AAD estimates are shown plotted against $\alpha$ on a logarithmic scale in \Cref{fig:SGDCamel}.
Similarly to \Cref{fig:alpha2bias}, plots of the linear functions $r_1(\alpha) = k_1 \alpha$, $r_2(\alpha) = k_2 \alpha^2$ are shown for comparison with the bounds expected by \Cref{t:Couple_theta}. Again, the constants $k_1, k_2$ were chosen to aid comparison. 

The results agree with \Cref{t:Couple_theta}. For the unfiltered case, the AAD is of order $O(\alpha)$. Moreover, the AAD of estimates filtered by either filter in \eqref{e:filters} are of order $O(\alpha^2)$.   This improvement is predicted by  \Cref{t:Couple_theta} for the second order filter since $\barUpupsilon^* = \Zero$.

\subsubsection{Almost periodic linear systems}
\label{s:QPLS}

A \textit{quasi periodic linear system} is a special case of QSA with constant gain:
\begin{equation}
	\ddt \ODEstate_t =  \alpha A_t \ODEstate_t\,,\qquad A_t\eqdef A(\qsaprobe_t) \, .
	\label{e:linQSA}
\end{equation}
The mean flow vector field is linear, $\barf(\theta) = A^* \theta$, with $A^*$ the steady state mean of $A_t$,  and we have
$\haf(\theta, z)  = \haA(z) \theta $, where $\haA$ is defined in 	\eqref{e:der_haA}
and solves
\[
\haA(  \Phi_{t_0} ) = 
\int_{t_0} ^{t_1}   [ A( \qsaprobe_t)  - A(\theta) ] \, dt    + \haA(   \Phi_{t_1} )   \,,\qquad 0\le t_0\le t_1
\]

The goal in some of this literature is to construct a 
time varying linear transformation  $\clT_t$ 
(known as the Lyapunov-Perron transformation)
that results in time-homogeneous dynamics,  $	\ddt  Y_t = A^* Y_t$,    $    \ODEstate_t = \clT_t Y_t $
(see for example \cite[Thm. 1]{mur78}).
The fact that this transformation
is not always possible is clear from the fact that \eqref{e:linQSA} may be unstable for some $\alpha$ even when $A^*$ is Hurwitz;  conversely, the time varying system  \eqref{e:linQSA} may be stable even when $A^*$ has a positive eigenvalue---see 
\cite{belbenmee84} for a survey.   

The Lyapunov exponent admits the representation 
\begin{equation}
	\Lambda_{\ODEstate}  = \lim_{t\to\infty}\tfrac{1}{t} \max_\theta \log (\| \ODEstate_t \|) 
	\label{e:LyapExpODEstate}
\end{equation}
where the maximum is over initial conditions   $  \ODEstate_0 =\theta $ satisfying $\|\theta \|=1$.   
\Cref{t:varpi} imples that $	\Lambda_{\ODEstate} <0$ for all $\alpha>0$ sufficiently small.

	\begin{proposition}[Quasi Periodic Linear Systems]
		\label[proposition]{t:QPL}
		Consider the linear QSA ODE \eqref{e:linQSA}, subject to (A0)--(A4),  and let 
		$\Lambda_{\ODEstate}^\alpha$ denote the Lyapunov exponent associated with \eqref{e:linQSA}.

		Then, there exists $\alpha^0>0$ such that
		$\Lambda_{\ODEstate}^\alpha <0$ for   $0<\alpha\le \alpha^0$, and
		\begin{equation}
			\lim_{\alpha\downarrow 0}  \frac{1}{\alpha} \Lambda_{\ODEstate}^\alpha  \leq   \text{\rm Re}(\lambda_1) <0
			\label{e:linQSAasymptotics}
		\end{equation}
		where $\lambda_1$ is an eigenvalue of  $A^\ocp$ with maximal real part. 
	\end{proposition}
	\begin{proof}
		%	We establish only the   upper bound,
		
		The following calculation is identical to the first step in the proof of  \Cref{t:PMF}, 
		using the shorthand $\haA_t = \haA(\Phi_t)$:
		\[
		\begin{aligned}
			\ddt \haf(\ODEstate_t,  \Phi_t)    \eqdef  \ddt \bigl( \haA_t \ODEstate_t  \bigr)
			& = \bigl(  \ddt \haA_t  \bigr)  \ODEstate_t  +  \haA_t  \bigl(  \ddt \ODEstate_t \bigr)  
			\\
			& =    - [ A_t  - A^\ocp ] \ODEstate_t    +  \alpha \haA_t  A_t \ODEstate_t   
		\end{aligned} 
		\] 
		Note that    $\Upupsilon_t \eqdef - \haA_t  A_t \ODEstate_t  $ in the notation of
		\Cref{t:PMF}.

		The resulting identity $A_t \ODEstate_t  =   A^\ocp +   \alpha \haA_t  A_t  -  \ddt \bigl( \haA_t \ODEstate_t  \bigr)$  is substituted into  \eqref{e:linQSA}  to obtain
		\[
		\ddt \ODEstate_t =  \alpha   \bigl[  A^\ocp +   \alpha \haA_t  A_t  \bigr] \ODEstate_t   - \alpha \ddt \bigl( \haA_t \ODEstate_t  \bigr)
		\]
		This motivates the introduction of a new state variable $Y_t =  [I + \alpha  \haA_t] \ODEstate_t $ to obtain the state space model
		\[
		\ddt Y_t =  \alpha   \bigl[  A^\ocp +   \alpha \haA_t  A_t  \bigr]     \bigl[  I + \alpha  \haA_t  \bigr]   ^{-1}  Y_t
		\]
		The inverse exists for all sufficiently small $\alpha>0$ since $\{  \haA_t \}$ is bounded,   from which we obtain 
		\begin{equation}
			\ddt Y_t =  \alpha   \bigl[  A^\ocp +   \alpha \clE^\alpha_t  \bigr]      Y_t
			\label{e:linQSArep}
		\end{equation}
		in which  $\{\clE^\alpha_t\}$ is uniformly bounded in $t$, and $\alpha$ in a neighborhood of the origin.  
		For each $t$,
		\[
		\lim_{\alpha\downarrow 0} \tfrac{1}{\alpha} 
		\clE^\alpha_t   = \haA_t  A_t   -    A^\ocp\haA_t   
		\]

		The remainder of the proof is standard.  First, for any positive definite matrix $P$, on taking $V(\theta) = \theta^\transpose P \theta $
		we have 
		\[
		2\Lambda_{\ODEstate}(\theta) 
		= \lim_{t\to\infty}\tfrac{1}{t} \log (V(\ODEstate_t) )   
		= \lim_{t\to\infty}\tfrac{1}{t} \log (V(Y_t) )  
		\]
		This is simply a consequence of the fact that  all norms are equivalent on $\Re^d$,  along with uniform bounds on the norm of $[I + \alpha  \haA_t] $ and its inverse.
		
		The matrix $\delta I +  A^\ocp $ is Hurwitz for any $0\le \delta <  | \text{Re}(\lambda_1)|$, so that there is a solution $P>0$ to the Lyapunov equation  ${A^\ocp}^\transpose P + P A^\ocp   + 2 \delta P =  - I $.   Applying \eqref{e:linQSArep} gives
		\[
		\ddt V(Y_t)  =   -\alpha \bigl[  2\delta V(Y_t)  +  \| Y_t\|^2 \bigr]    + \alpha^2 Y_t^\transpose \bigl[  {\clE^\alpha_t  }^\transpose P + P \clE^\alpha_t  \bigr]  Y_t.
		\]
		Dividing both sides by $ V(Y_t)$ we obtain
		\[
		\ddt \log(V(Y_t)) = -2\alpha \delta - \alpha \frac{\|Y_t \|^2}{V(Y_t)}  +\alpha^2 \frac{Y_t^\transpose \bigl[  {\clE^\alpha_t  }^\transpose P + P \clE^\alpha_t  \bigr]  Y_t}{V(Y_t)}
		\]
		%	\notes{insert discussion on second term} 
		with the second term in the right side becoming small as $\delta$ approaches $ | \text{Re}(\lambda_1)|$. For $\alpha>0$ sufficiently small we obtain the upper bound,
		\[
		\begin{aligned}
			2\Lambda_{\ODEstate}(\theta)    =  \lim_{t\to\infty}\tfrac{1}{t} \log (V(Y_t) )    & \le -2\alpha \delta +  O(\alpha^2)
			\\
			\textit{giving}\quad       \limsup_{\alpha\downarrow 0}  \frac{1}{\alpha} \Lambda_{\ODEstate}^\alpha & \le  -   \delta
		\end{aligned} 
		\]
		This establishes \eqref{e:linQSAasymptotics} since  $\delta <  | \text{Re}(\lambda_1)|$ is arbitrary.
	\end{proof}
%%%%%%%%%%% %%%%%%%%%%%  

%%%%%%%%%%% %%%%%%%%%%%  
\subsection{Extensions}
\label{s:extensions}

%Three extensions follow.  
Two extensions follow.
\begin{romannum}
\item 
First, we review the main conclusions from \cite{laumey22,laumey22e} for vanishing gain QSA.

\item
Then, we explain how bias bounds extend to stochastic approximation.    
\end{romannum}

\subsubsection{Vanishing gain}

The main result  of \cite{laumey22,laumey22e},  concerns rates of convergence for the QSA ODE,  and averaging techniques to improve these rates.  The gain is taken of the form \eqref{e:arho} with  $\rho \in (\half, 1)$ and $t_e=1$.    
In this setting, the use of  Polyak-Ruppert averaging is sometimes effective.   At time $T$, the averaged parameter estimate is defined as 
\begin{equation}
\ODEstatePR_T \eqdef \frac{1}{T-T_0}  \int_{T_0}^T   \ODEstate_t\, dt
\label{e:PRQSA}
\end{equation}
in which the starting time scales linearly with $T$:  for fixed $\kappa>1$ we chose   $T_0 $ to solve  $1/(T-T_0) = \kappa/T$.

 \begin{subequations}
\begin{theorem}
\label[theorem]{t:BigQSA}
Suppose that  Assumptions~(A1)--(A4) and that the Lyapunov bound (V4') hold. Suppose moreover that $a_t$ is of the form \eqref{e:arho} with  $\rho \in (\half, 1)$ and $t_e=1$.  Then the following conclusions hold:
\begin{align}
\ODEstate_t  & = \theta^\ocp  -  a_t   \haf^*_t  + O(a_t \|\barY^*\|)  + o(a_t)    &&  t\ge t_0
\label{e:BigCouplingQSAcor}
\\	
		 \ODEstatePR_T  &= \theta^*+   a_T  [c(\kappa,\rho)  +o(1)]   \barY^* +  O(T^{-2\rho}) \,,\qquad &&T_0\ge t_0
		\label{e:PR_2rhobound} 
\end{align}
where $c(\kappa,\rho) =c(\kappa,\rho) = \kappa[1-(1-1/\kappa)^{1-\rho}]/(1-\rho)$,
and  $\barY^* = [A^\ocp]^{-1} \barUpupsilon^* $.  
 
Consequently,    $\ODEstatePR_T$ converges to $\theta^\ocp$ with rate bounded by $O(T^{-2\rho})$ if and only if $\barY^* = 0$.
\end{theorem}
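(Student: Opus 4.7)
The plan is to mirror the fixed-gain analysis of \Cref{t:PMF} and \Cref{t:Couple_theta}, but with the gain $\alpha$ replaced by the slowly varying $a_t$, and then to exploit the Poisson equation structure once more when we pass from the pointwise description of $\ODEstate_t$ to an integrated (time-averaged) description. Throughout, all integrations by parts are justified because $\bfODEstate$ has bounded sample paths by hypothesis, which together with Assumption~(A3) makes the signals $\haf_t$, $\hahaf_t$, $\haUpupsilon_t$ uniformly bounded.

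First, to establish \eqref{e:BigCouplingQSAcor}, I would introduce the auxiliary variable $Y_t = \ODEstate_t + a_t \haf_t$, exactly as in Step~1 of the proof of \Cref{t:PMF}. A direct differentiation gives
\begin{equation*}
\ddt Y_t = a_t\, \barf(Y_t) - a_t^2\, \Upupsilon_t + \dot a_t\, \haf_t + O(a_t^2),
\end{equation*}
where the final $O(a_t^2)$ comes from the Taylor error $\barf(\ODEstate_t) - \barf(Y_t) = O(a_t \| \haf_t \|)$ multiplied by $a_t$. Since $\rho<1$, $|\dot a_t| = O(a_t/t) = o(a_t^2)$, so this perturbation is negligible relative to the $a_t^2$ drift from $-a_t^2 \barUpupsilon(Y_t)$ (with the zero-mean remainder $\Upupsilon_t - \barUpupsilon(Y_t)$ handled by a further Poisson step exactly as in Step~3 of \Cref{t:PMF}). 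Linearizing $\barf$ and $\barUpupsilon$ around $\theta^*$ and applying the exponential asymptotic stability of the mean flow guaranteed by \Cref{t:qsaprobeExpAS}, a standard singular-perturbation style argument yields $Y_t = \theta^* + a_t \barY^* + o(a_t)$. Substituting back $\ODEstate_t = Y_t - a_t \haf_t$ delivers \eqref{e:BigCouplingQSAcor}.

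Second, for \eqref{e:PR_2rhobound}, I would integrate \eqref{e:BigCouplingQSAcor} from $T_0$ to $T$ and normalize by $T-T_0 = T/\kappa$. The term involving $\barY^*$ contributes
\begin{equation*}
\frac{\barY^*}{T-T_0} \int_{T_0}^T a_t\, dt = \kappa \alpha\, \frac{T^{1-\rho}-T_0^{1-\rho}}{(1-\rho)T}\,\barY^* + o(a_T) = a_T\, [c(\kappa,\rho) + o(1)]\, \barY^*,
\end{equation*}
with $c(\kappa,\rho)$ as stated. The oscillatory term is handled by the key Poisson identity $\ddt \hahaf(\theta^*, \Phi_t) = -\haf^*_t$, which after integration by parts gives
\begin{equation*}
\int_{T_0}^T a_t\, \haf^*_t\, dt = -\bigl[ a_t\, \hahaf^*_t\bigr]_{T_0}^T + \int_{T_0}^T \dot a_t\, \hahaf^*_t\, dt = O(a_T),
\end{equation*}
since $\hahaf^*$ is uniformly bounded and $|\int \dot a_t\, dt| = |a_{T_0}-a_T| = O(a_T)$. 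Dividing by $T-T_0 = T/\kappa$ produces $O(a_T/T)=O(T^{-1-\rho})$, which is $o(T^{-2\rho})$ because $\rho<1$. Finally, the $o(a_t)$ remainder in \eqref{e:BigCouplingQSAcor} must be sharpened: the dominant piece comes from the $O(a_t^2)$ drift in the $Y$-equation, which integrates to $O(a_T^2)=O(T^{-2\rho})$ after normalization.

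The main obstacle is the third step: producing the clean $O(T^{-2\rho})$ bound requires showing that the deviation $\ODEstate_t - \theta^* + a_t\haf^*_t - a_t\barY^*$, while only $o(a_t)$ pointwise, contributes at most $O(T^{-2\rho})$ after averaging. This calls for one further application of the Poisson-equation machinery (on the $\Upupsilon$ residual and on the Taylor remainder from the linearization) so that all remaining high-frequency pieces are expressed as derivatives and collapse to boundary terms under integration. The equivalence ``$\barY^*=0 \iff \ODEstatePR_T \to \theta^*$ at rate $O(T^{-2\rho})$'' is then immediate from the explicit form, since $c(\kappa,\rho)\neq 0$ for $\kappa>1$ and $\rho\in(\tfrac12,1)$.
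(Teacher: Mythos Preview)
Your proposal is correct and follows essentially the same route as the paper. The only cosmetic difference is that the paper works directly with the scaled error $\scerror_t = a_t^{-1}(\ODEstate_t-\theta^*)$ and then shifts to $Y_t=\scerror_t+\haf_t$, whereas you work with the unscaled $Y_t=\ODEstate_t+a_t\haf_t=\theta^*+a_t(\scerror_t+\haf_t)$; the two are affinely related, and the Poisson-equation machinery, the linearization around $\theta^*$, and the integration-by-parts via $\hahaf$ for the averaged estimate are all the same.
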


 \end{subequations}

This is a remarkable conclusion, since we can choose $\rho$ to obtain a rate of convergence arbitrarily close to $O(T^{-2})$.   

The key step in the proof is to obtain a version of the perturbative mean flow, this time for the scaled error,
 \begin{equation}
\scerror_t = \frac{1}{a_t}  \bigl(  \ODEstate_t -  \theta^\ocp \bigr) \, ,   \qquad t\ge t_0
\label{e:scaled_error}
\end{equation}
The analysis is entirely  local, so that a linearization around $\theta^\ocp$ is justified:
\[
\ddt \scerror_t   =    a_t       [o(1) I +  A^\ocp  ]     \scerror_t   +      \tilXi_t   
\]
in which the definition of the apparent noise $\{  \tilXi_t    \}$  is unchanged.    Applying 	\eqref{e:EndOfStep1} then gives,
\begin{equation} 
\ddt \scerror_t   =    a_t       [o(1) I +  A^\ocp  ]     \scerror_t    -\ddt  \haf_t   -a_t \Upupsilon_t
\label{e:scaledErrorRep_cor_rho}
\end{equation} 
The change of variables $Y_t = \scerror_t   +      \haf_t $ then gives an approximation similar to  \eqref{e:prematureMeanflow}:
\[
\ddt Y_t   =    a_t    \bigl(     [o(1) I +  A^\ocp  ]     Y_t     \bigr)
	  -a_t  \bigl(         [o(1) I +  A^\ocp  ]      \haf_t   +\Upupsilon_t \bigr)
\]
This leads to two consequences:   a rate of convergence of order $O(a_t)$ for $\ODEstate_t$,  and a bound of order $O(a_t^2)$  for $\ODEstatePR_t$.  However, \textit{a rate of order $O(a_t^2)$ is only possible if $\barUpupsilon^*=\Zero$.}

\subsubsection{Implications to stochastic approximation}
\label{s:SAbias}

To obtain extensions to the stochastic setting recall  the definition of $\Upupsilon$ in \eqref{e:Upupsilon},  
and the representations for its steady-state mean value:
\begin{equation}
\begin{aligned}
		\Upupsilon(\theta,z)   &=   -  [\partial_\theta \haf   (\theta,z ) ]\cdot   f (\theta, G(z))       
   \\
\alpha \Expect[ \Upupsilon(\Psi)  ]   &            
			=   - \Expect [ \tilXi]  \,,\qquad\qquad \tilXi = f(\Psi )  -  \barf(\ODEstate)   
\end{aligned} 
\label{e:UpupsilonDer2}
\end{equation}
The second identity is obtained on combining \eqref{e:EndOfStep1} with \Cref{t:QSAgenerator}  (with expectations in steady-state).

 %\notes{}

Something similar appears in a stochastic setting,   and presents a similar challenge for 
	variance
 reduction via averaging.  Here we establish an analog of \Cref{t:varpi}~(ii).

Consider the general SA recursion \eqref{e:SA_recur} with constant gain,
\[
 \theta_{n+1} = \theta_{n}+ \alpha  \bigl[  \barf(\theta_n)  +  \tilXi_n \bigr]
		\,,\qquad    \tilXi_n   =  f(\theta_n,\qsaprobe_{n+1})     - \barf(\theta_n)
\]
It is assumed throughout that $f$ satisfies the uniform Lipschitz condition imposed in (A1).

Assume a finite state space Markovian realization for the probing sequence:   there is an irreducible Markov chain on a finite state space $\state$ and function $G\colon\state \to \Re^{d\times d}$ that specifies $\qsaprobe_n = G(\Phi_n)$.   
Then $\Psi_n \eqdef(\theta_n,\Phi_n)$ is a Markov chain, and it satisfies the Feller property under our standing assumption that $f$ is continuous in $\theta$.  

Assume that the sequence is bounded in $L_2$ for at least one initial condition:  $\sup_n \Expect[\| \theta_n\|^2 ]<\infty$.   The Feller property combined with tightness implies the existence of  at least one invariant probability measure $\upvarpi$ on  $\clB(\Re^d\times \state)$.  Moreover,  under the invariant distribution,  the random vector $\theta_n$ has a finite $p$th moment for $p<2$.
Existence of $\upvarpi$  is justified in Section 12.1.1 of  \cite{MT}, and the uniform second moment implies uniform integrability  of $\{ \| \theta_n\|^p : n\ge 1\}$ from one initial condition, implying $ \Expect[\| \theta_n\|^p] <\infty$ 
when $(\theta_n, \Phi_n)\sim \upvarpi$.

%\notes{}

Suppose that $\Psi_0 \sim \upvarpi$ so that $\{ \Psi_n : n\ge 0\}$ is a stationary process.  Taking expectations of both sides of the recursion gives:
\[
\Expect[ \theta_n ] =\Expect[ \theta_{n+1} ] 
			=\Expect[  \theta_{n} ]
			+ \alpha \Expect \bigl[  \barf(\theta_n) \bigr] 
			+ \alpha \Expect \bigl[ \tilXi_n \bigr] 
\]
From this we obtain the identity  
\[
\Expect \bigl[  \barf(\theta_n) \bigr] 
			= -\Expect \bigl[ \tilXi_n \bigr] 
\]
When $\{ \tilXi_n \}$ is a martingale difference sequence, such as when $\bfPhi$ is i.i.d., the right hand side is zero,  which is  encouraging.   In the Markov case this conclusion is no longer valid.

We can see this by bringing in Poisson's equation for the Markov chain:
%\[
%\Expect[ \haf(\theta ,  \Phi_{n+1})  \mid \clF_n]  = \haf(\theta ,  \Phi_{n})  -  
%  \bigl[ f(\theta,\qsaprobe_{n})     - \barf(\theta)\bigr]  
%\]
\[
\Expect[ \haf(\theta ,  \Phi_{n+1})   \mid \clF_n]  =  \haf(\theta ,  \Phi_{n})  -  
\bigl[ f(\theta,\qsaprobe_{n})     - \barf(\theta)\bigr]  
\]
where $\clF_n = \sigma(\Psi_0\,,\dots\,, \Psi_n)$.   This implies a useful representation for the apparent noise, 
\begin{align}
\tilXi_{n}   &=  f(\theta_n,\qsaprobe_{n+1})     - \barf(\theta_n)
=
  \bigl[ \haf(\theta_n ,  \Phi_{n+1})   -  \haf(\theta_n ,  \Phi_{n+2})  \bigr]  + \clW_{n+2} 
 \label{e:SAdecomp}
\end{align}
where $ \clW_{n+1}  =  \haf(\theta ,  \Phi_{n+2})  - \Expect[ \haf(\theta ,  \Phi_{n+2})   \mid \clF_{n+1}]  $ is a martingale difference sequence.

It is assumed that the solution to Poisson's equation is normalized to have zero mean:  $\Zero = \sum_z \haf(\theta , z) \, \uppi(z)$ for each $\theta$,  with $\uppi$ the unique invariant probability measure for $\bfPhi$.
Under the Lipschitz assumption on $f$ we have $\| \haf(\theta , z) \|  \le L_{\haf} [1 + \| \theta\| ]$ for some constant $ L_{\haf} $ and all $(\theta, z)$   (apply any of the standard representations of Poisson's equation in standard texts, such as \cite{MT}).

%\notes{}

\begin{proposition}
\label[proposition]{t:ApparentNoiseSA}
The  tracking bias may be expressed
\[
\Expect[\barf(\theta_n)]  =     \alpha \Expect[\Upupsilon_n ]  \,,
\qquad
\textit{with}
\quad
 \Upupsilon_n      \eqdef   -  \frac{1}{\alpha} \bigl[ \haf(\theta_{n+1} ,  \Phi_{n+2})   -  \haf(\theta_n ,  \Phi_{n+2}) \bigr] \,,
\]
where the expectations are taken in steady-state, so independent of $n$.
\end{proposition}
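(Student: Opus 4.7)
The plan is to combine the stationary SA recursion with a single application of Poisson's equation at the appropriately shifted time index, and then exploit stationarity of the joint Markov chain to convert a difference in the $\Phi$-argument into the difference in the $\theta$-argument that defines $\Upupsilon_n$. Integrability of every quantity below under $\upvarpi$ is guaranteed by the Lipschitz assumption (A1), the moment bound $\Expect[\|\theta_n\|^p]<\infty$ for $p<2$ recalled just before the proposition, and the linear-growth estimate $\|\haf(\theta,z)\| \le L_{\haf}[1+\|\theta\|]$.

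First, from the stationary SA recursion $\theta_{n+1} = \theta_n + \alpha[\barf(\theta_n) + \tilXi_n]$, take expectations and use $\Expect[\theta_{n+1}] = \Expect[\theta_n]$ to obtain the starting identity $\Expect[\barf(\theta_n)] = -\Expect[\tilXi_n]$. Next, apply Poisson's equation with the parameter frozen at $\theta = \theta_n$, which is $\clF_{n+1}$-measurable, but shifted forward by one step so that the forcing on the right is precisely $\tilXi_n$:
\begin{equation*}
\Expect[\haf(\theta_n, \Phi_{n+2}) \mid \clF_{n+1}] = \haf(\theta_n, \Phi_{n+1}) - [f(\theta_n, \qsaprobe_{n+1}) - \barf(\theta_n)] = \haf(\theta_n, \Phi_{n+1}) - \tilXi_n.
\end{equation*}
Taking an unconditional expectation gives $\Expect[\tilXi_n] = \Expect[\haf(\theta_n, \Phi_{n+1})] - \Expect[\haf(\theta_n, \Phi_{n+2})]$.

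The concluding step exploits stationarity of $\Psi_n = (\theta_n, \Phi_n)$. Since $(\Psi_n,\Psi_{n+1}) \eqdist (\Psi_{n+1},\Psi_{n+2})$, selecting the $\theta$-coordinate at the earlier index and the $\Phi$-coordinate at the later index gives $(\theta_n, \Phi_{n+1}) \eqdist (\theta_{n+1}, \Phi_{n+2})$, so $\Expect[\haf(\theta_n, \Phi_{n+1})] = \Expect[\haf(\theta_{n+1}, \Phi_{n+2})]$. Substituting yields
\begin{equation*}
\Expect[\tilXi_n] = \Expect[\haf(\theta_{n+1}, \Phi_{n+2}) - \haf(\theta_n, \Phi_{n+2})] = -\alpha\,\Expect[\Upupsilon_n],
\end{equation*}
and combining with $\Expect[\barf(\theta_n)] = -\Expect[\tilXi_n]$ gives the stated formula.

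The only subtle point is the choice of time index at which to invoke Poisson's equation: one must freeze $\theta = \theta_n$ and condition on $\clF_{n+1}$, so that the noise emerging is $\tilXi_n = f(\theta_n, \qsaprobe_{n+1}) - \barf(\theta_n)$ and not $f(\theta_n, \qsaprobe_n) - \barf(\theta_n)$, the latter being incompatible with the SA update. Once this is done, the passage from $\haf(\theta_n, \Phi_{n+1})$ to $\haf(\theta_{n+1}, \Phi_{n+2})$ is just a one-step stationarity shift, and the factor $1/\alpha$ in the definition of $\Upupsilon_n$ reflects that $\theta_{n+1} - \theta_n$ is itself of size $\alpha$, so that $\Upupsilon_n = O(1)$ and the bias is of order $O(\alpha)$---in stark contrast to the QSA situation, where careful frequency design in (A0b) kills the analogous $\barUpupsilon$ entirely.
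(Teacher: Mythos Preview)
Your proof is correct and arguably cleaner than the paper's. The paper proceeds by writing the martingale-difference decomposition
\[
\tilXi_n = \bigl[\haf(\theta_n,\Phi_{n+1}) - \haf(\theta_n,\Phi_{n+2})\bigr] + \clW_{n+2},
\]
then summing from $n=1$ to $N$, telescoping the first bracket into a boundary term plus $\sum_{n=0}^{N-1}[\haf(\theta_{n+1},\Phi_{n+2}) - \haf(\theta_n,\Phi_{n+2})]$, taking expectations in the stationary regime, dividing by $N$, and letting $N\to\infty$. Your route replaces the entire telescoping-and-Ces\`aro step with the single distributional identity $(\theta_n,\Phi_{n+1}) \eqdist (\theta_{n+1},\Phi_{n+2})$, which follows immediately from stationarity of the pair $(\Psi_n,\Psi_{n+1})$. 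Since the paper already assumes $\Psi_0\sim\upvarpi$ and strict stationarity, the averaging is redundant---every summand has the same mean---and your one-line shift is the more direct way to convert the $\Phi$-difference into the desired $\theta$-difference. The paper's longer argument would have an advantage only in a setting where stationarity is available in the Ces\`aro sense but not pointwise, which is not the case here.
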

This is entirely consistent with the conclusion in the continuous time case,     in which the derivative appearing in \eqref{e:UpupsilonDer2} is replaced by a difference.

 There are deeper connections explained in the Appendix---see in  particular the formula \eqref{e:newUpupsilon}.   
 As in the deterministic setting, we expect $\Upupsilon_n$ to be bounded over $0 < \alpha\le\alpha^0$ for suitably small $\alpha^0>0$,  but it may be large if the Markov chain has significant memory.

\begin{proof}
Applying \eqref{e:SAdecomp},  the   partial sums of the apparent noise may be expressed 
\[
\sum_{n=1}^{N} \tilXi_{n}  = 
\clM_{N+2}     
+
\bigl[ \haf(\theta_0 ,  \Phi_2)  -  \haf(\theta_{N} ,  \Phi_{N+2}) \bigr]
+
\sum_{n=0}^{N-1} \bigl[ \haf(\theta_{n+1} ,  \Phi_{n+2})   -  \haf(\theta_n ,  \Phi_{n+2}) \bigr]
\]
where $\{  \clM_{N+2}  =   \sum_{n=1}^{N}\clW_{n+2}    \}$ is a zero-mean martingale.
Consider now the stationary regime with  $\Psi_0 \sim \upvarpi$.  
Taking expectations of both sides, dividing each side by $N$, and letting $N\to\infty$ gives the desired result.  \end{proof}

%%%%%%%%%%% %%%%%%%%%%% %%%%%%%%%%% %%%%%%%%%%% 
\section{Implications to Extremum Seeking Control}
\label{s:ESC}

\subsection{What is extremum seeking control?}

	While much of ESC theory concerns tracking the minimizer of a time-varying objective  (that is,  $\Obj$ depends on both the parameter $\theta$ and time $t$),   it is simplest to first explain the ideas in the context of global optimization of the static objective $\Obj \colon\Re^d\to\Re$.

We begin with an explanation of the appearance of $- M  \tilnabla_t\Obj $ in Fig~\ref{fig:ESC}.     The low pass filter with output  $\ODEstate_t$ is designed so that the derivative $\ddt \ODEstate_t$ is small in magnitude, to justify a quasi-static analysis.   
	An example is
	\begin{equation}
		\ddt \ODEstate_t =   - \sigma [ \ODEstate_t  -\theta^\ctr ]  +   \alpha   U_t  \,,   \qquad U_t = -  M \tilnabla_t\Obj 
		\label{e:ESC_LP}
\end{equation}
with parameters satisfying $0< \sigma < \alpha$. 

Motivation for the high pass filter   is provided using two extremes.  In each case it is assumed that the probing gain is constant:   $\upepsilon_t \equiv \epsy$, independent of time.  
	
	\wham{Pure differentiation:}   In this case, the figure is interpreted as follows
	\begin{equation}
		M \tilnabla_t\Obj  =  \bigl(\ddt \qsaprobe_t  \bigr) \bigl(   \tfrac{1}{\epsy}  \ddt  \Obj(\ODEstate_t + \epsy \qsaprobe_t)  \Bigr)
		\label{e:ESC1}
	\end{equation}
	Adopting the notation from the figure $  \cqsaprobe_t = \ddt \qsaprobe_t  $,  we obtain by the chain rule 
	\begin{equation}
		M \tilnabla_t\Obj  =   \cqsaprobe_t {\cqsaprobe_t}^\transpose \nabla \Obj\, (\ODEstate_t + \epsy \qsaprobe_t)   + \clW_t
		\label{e:ESC1explained}
	\end{equation}
	where $\clW_t =    \cqsaprobe_t     \nabla^\transpose \Obj\, (\ODEstate_t + \epsy \qsaprobe_t) \ddt  \ODEstate_t  $ is small by design of the low pass filter---consider \eqref{e:ESC_LP}, with $\alpha>0$  small.
	
	This justifies the diagram with $M_t= \cqsaprobe_t {\cqsaprobe_t}^\transpose $  time varying, but it is the expectation of $M_t$ that is most important in analysis.

	The pair of equations (\ref{e:ESC_LP}, \ref{e:ESC1}) is an instance of the QSA ODE \eqref{e:QSA_ODE_gen}, with $2d$-dimensional probing signal $( \qsaprobe_t,\cqsaprobe_t)$.

\wham{All pass:} This is the special case in which the high pass filter is removed entirely, giving 
	\begin{equation}
		M \tilnabla_t\Obj  =    \qsaprobe_t  \tfrac{1}{\epsy} \Obj\, (\ODEstate_t + \epsy \qsaprobe_t)    
		\label{e:ESC-0}
\end{equation}
This takes us back to the beginning:  based on \eqref{e:1qSGD} we conclude that 1qSGD is the ESC algorithm in which $\sigma=0$ in the low pass filter, and the high pass filter is unity gain.

In \Cref{s:ESCbig} we explain how the more general ESC ODE can be cast as QSA.

\subsection{State dependent probing}
\label{s:SDprobe}

\begin{subequations}

It is time to explain the time varying probing gain, denoted $\upepsilon_t $ in  \Cref{fig:ESC}.    This will be state dependent for two important reasons:
\begin{romannum}
\item   The vector fields for ESC and the special case 1qSGD are \textit{not Lipschitz continuous} unless the objective is Lipschitz.    For 1qSGD,  the vector field includes  $    \clYn_t   = \frac{1}{\epsy} \Obj(\ODEstate_t + \epsy \qsaprobe_t)$,  and we show below that every ESC algorithm admits a state space representation driven by the same signal.

\item  If the observed cost $\Obj(\clZ_t)$ is large, then it makes sense to increase the exploration gain to move more quickly to a more desirable region of the parameter space.   
\end{romannum}
 Two choices for $\upepsilon_t \equiv \upepsilon(\ODEstate_t)$ are proposed here:
 \begin{align}
 \upepsilon(\theta) &= \epsy \sqrt{1 + \Obj(\theta) - \Obj^-} 
 \label{e:upepsy1}
\\
 \upepsilon(\theta) &= \epsy\sqrt{1+ \| \theta - \theta^\ctr\|^2 / \upsigma_p^2 }
\label{e:upepsy2}
\end{align}
where in  \eqref{e:upepsy1} the constant  $ \Obj^- $  chosen so that $ \Obj(\theta) \ge \Obj^-$ for all $\theta$.
In the second option,   $\theta^\ctr$ is interpreted as an a-priori estimate of $\theta^\opt$,  and $\upsigma_p$ plays the role of standard deviation around this prior.   
 \label{e:upepsy}
 \end{subequations}

The first is most intuitive, since it directly addresses (ii):  the exploration gain $\upepsilon_t$  is large when $ \Obj(\ODEstate_t) $ is far from its optimal value.   However, it does not lead to an online algorithm since $\Obj(\ODEstate_t) $ is not observed.   In a discrete-time implementation we would use the online version:
\[
 \upepsilon_{t_n}  = \epsy \sqrt{1 + \Obj(\clY_{t_{n-1}} ) - \Obj^-}
\]

In cases  \eqref{e:upepsy1} or  \eqref{e:upepsy2} we introduce the new notation,
\begin{equation}
    \clYn( \theta,\qsaprobe)    = \frac{1}{\upepsilon} \Obj(\theta + \upepsilon \qsaprobe)\,, 
        \label{e:normalizedObservations}
\end{equation}
with the usual shorthand  $   \clYn_t  = \clYn(\ODEstate_t,\qsaprobe_t) $.  
The use of \eqref{e:upepsy2} leads to a Lipschitz vector field for ESC:
 \begin{proposition}
 \label[proposition]{t:Lip_qSGD} 
 The function $   \clYn$ defined in \eqref{e:normalizedObservations}
is uniformly Lipschitz continuous in $\theta$ under either of the following assumptions on $\upepsilon = \upepsilon(\theta)$,  and with   objective $\Obj$ whose gradient is uniformly Lipschitz continuous:  
 \begin{romannum}
\item 
 $\upepsilon$ is defined by \eqref{e:upepsy2}.
 
\item 
$\upepsilon$ is defined by \eqref{e:upepsy1},   
and
there is $\delta>0$ such that  $\|\nabla \Obj (\theta)\|  \ge \delta \| \theta\| $ whenever $\|\theta\| \ge \delta^{-1}$.     
 \end{romannum}
Moreover, under either (i) or (ii), the following approximation holds,
\begin{equation}
	\clYn( \theta,\qsaprobe)    =    \frac{1}{\upepsilon(\theta)}   \Obj(\theta)   +   \qsaprobe^\transpose \nabla\, \Obj(\theta)   +  O(\upepsilon)     \, ,
	\label{e:clYnTS}
\end{equation}
where the error term $ O(\upepsilon)$ is bounded by a fixed constant times $\upepsilon(\theta)$.  
 \end{proposition}
 
 \begin{proof}
 The proof follows from a variation on \eqref{e:1qSPSA_FTC} and the definition \eqref{e:normalizedObservations}:
\[
    \clYn( \theta,\qsaprobe)    = 
\qsaprobe^\transpose   \int_{0}^1   \nabla \Obj\,( \theta + t  \upepsilon(\theta)\qsaprobe)  \, dt   
						 +				 
    \frac{1}{\upepsilon(\theta)} \Obj(\theta )  
 \]
 The integrand in the first term is Lipschitz in $\theta$,  since the composition of Lipschitz functions is Lipschitz.  The second term is Lipschitz since its gradient is bounded under the given assumptions.
\end{proof}

The state-dependent probing gain ensures that the the Lipschitz condition (A1) is satisfied, which is essential to  global stability theory.  An example of divergence using a fixed probing gain is given next.

\wham{Finite escape time for ESC}
The  Lipschitz conditions in (A1) cannot be relaxed in the global stability theory for QSA.   
	This is why establishing global stability of ESC is challenging when $\Obj$ is not Lipschitz continuous.

	The  ESC-0 ODE is recalled here:    
	\begin{subequations}
		\begin{equation*}
			\begin{aligned}
				\dot{\ODEstate}_t &= -\alpha\frac{1}{\epsy} \qsaprobe_t \Obj(\Uppsi)  \,, 
				\quad
				\Uppsi_t = \ODEstate_t + \epsy \qsaprobe_t  \, .
			\end{aligned}
		\end{equation*}
		Consider the scalar ODE with quadratic objective  $\Obj(\theta) = \theta^2$
		and probing signal  $\qsaprobe_t = \cos(\omega_0 t)$.
		For this simple example, we obtain 
		\[
		\begin{aligned}
			\ddt \Uppsi &= -\frac{\alpha}{\epsy} \qsaprobe_t \Obj(\Uppsi) + \epsy \ddt {\qsaprobe}_t \\
			&= -\frac{\alpha}{\epsy} \cos(\omega_0 t) \Uppsi^2_t - \epsy \omega_0 \sin(\omega_0 t) \,.
		\end{aligned}
		\]
		\textit{This ODE has finite escape time when $\Uppsi_0<0$ and $|\Uppsi_0|$ is sufficiently large.   }
		
		To justify this claim, we   bound  $\{ \Uppsi_t : 0\le t <  t_\diamond\}$ with
		\[
		t_\diamond =2\epsy  \frac{1}{\alpha } \frac{1}{|\Uppsi_0|} \, .
		\]
		Assume that $\epsy|\Uppsi_0|^{-1}$ is sufficiently small so that $ \cos(\omega_0 t ) \ge \half$ for $0\le t\le  t_\diamond $.
		This implies   the lower bound $-\ddt \Uppsi_t \geq   \alpha  \Uppsi^2_t /(2\epsy)$, and   hence
		\[
		\ddt \frac{1}{\Uppsi_t}  =  - \frac{1}{\Uppsi^2_t}  \bigl(\ddt \Uppsi_t  \bigr)\geq     \frac{\alpha}{2\epsy}  \,   ,\quad \textit{ for $t<t_\diamond$.}
		\]
		Integrating both sides from $0$ to any value $T <t_\diamond$ gives
		\[
		\begin{aligned}
			\frac{1}{\Uppsi_T}  - & \frac{1}{\Uppsi_0} \geq \frac{\alpha}{2\epsy} T  
			\quad
			\Longrightarrow
			\quad
			\Uppsi_T \leq 	\Big( \frac{1}{\Uppsi_0}	 + \frac{\alpha}{2\epsy} T \Big)^{-1}		 \, .
		\end{aligned}
		\]
		In conclusion,    for a value $t_\bullet\in (0, t_\diamond)$,
		the solution $\{\Uppsi_t : 0\le t<t_\bullet \}$ is continuous and decreasing, with
		\[
		\lim_{T \uparrow t_\bullet}   \Uppsi_T   = - \infty \, .
		\]
		
		Global stability is ensured if the probing gain is state-dependent---either of the choices in \eqref{e:upepsy}
		ensure success. 
	\end{subequations}

\subsection{ESC and QSA}
\label{s:ESCbig}

We demonstrate here that the general ESC ODE can be interpreted as QSA,    driven by the observation process   $   \clYn_t  = \clYn(\ODEstate_t,\qsaprobe_t) $.  We find that a more informative interpretation of ESC is described as a \textit{two time-scale} variant of the QSA ODE.

For simplicity we opt for the first order low pass filter,
\begin{equation}
\ddt \ODEstate_t  = -\sigma \ODEstate_t  -   \alpha  \tilnabla_t\Obj\,,\qquad\tilnabla_t\Obj  =  \cqsaprobe_t \chclYn_t
\label{e:LPss}
\end{equation}
 The high pass filter is a general linear filter of order $q\ge 1$,   with state space realization defined by matrices  $(F,G,H,J)$  of compatible dimension.  For a scalar input $u_t$,  with the output of the high pass filter denoted $y_t$,  and the $q$-dimensional state  at time $t$ is denoted $Z_t$, we then have by definition,  
\begin{equation}
\begin{aligned}
\ddt Z_t   &=  F Z_t + G u_t
   \\
     y_t        & =   H^\transpose  Z_t + J u_t
\end{aligned} 
\label{e:HPss}
\end{equation}
In the architecture \Cref{fig:ESC} the high pass filter is used for  $d+1$ different choices of input and zero initial conditions are assumed: 
when $u_t=\clYn_t$, the output is $y_t = \chclYn_t$,   and the $i$th component   $\cqsaprobe_t^i$  of the filtered probing signal is the output with $u_t = \qsaprobe_t^i$.

\begin{proposition}
\label[proposition]{t:ESC=QSA}
The ESC ODE obtained using the low pass filter \eqref{e:LPss} and high pass filter \eqref{e:HPss} is itself a state space model with $(d+q)$-dimensional state $X_t = [\ODEstate_t^\transpose,  Z_t^\transpose]^\transpose$.  The nonlinear dynamics can be expressed 
	\begin{equation}
	\begin{aligned}
		\ddt X_t  &=  
		\alpha  \begin{bmatrix}    
			- \frac{\sigma}{\alpha}  I   &  -     \cqsaprobe_t \rH ^\transpose
			\\
			0   & \frac{1}{\alpha}   \rF   
		\end{bmatrix}  X_t   
		+
		\alpha \begin{bmatrix}    
			-  \rJ  \cqsaprobe_t   
			\\
			\frac{1}{\alpha}\rG 
		\end{bmatrix}   \clYn_t  \, ,
	\end{aligned} 
	\label{e:ESC=QSA}
\end{equation}
It is a controlled nonlinear state space model with input $(\qsaprobe, \cqsaprobe)$ for any choice of $\upepsilon$ in  \eqref{e:upepsy}. 
\end{proposition}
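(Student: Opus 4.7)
The proof is essentially a direct translation of the block diagram into equations, so the plan is mostly bookkeeping. First I would rewrite the low-pass filter \eqref{e:LPss} as
\[
\ddt \ODEstate_t = -\sigma \ODEstate_t - \alpha \cqsaprobe_t \chclYn_t,
\]
then realize the high-pass filter \eqref{e:HPss} with input $u_t = \clYn_t$ and output $y_t = \chclYn_t$, so that the $q$-dimensional state $Z_t$ satisfies $\ddt Z_t = F Z_t + G \clYn_t$ and $\chclYn_t = H^\transpose Z_t + J \clYn_t$. The crucial observation is that in this composite realization only the scalar filter driven by $\clYn_t$ contributes to the state; the auxiliary signal $\cqsaprobe_t$, obtained by passing each component of $\qsaprobe_t$ through an identical copy of the high-pass filter, is regarded as an exogenous input along with $\qsaprobe_t$. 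This is the only modelling choice that needs justification, and it is natural because $\bfqsaprobe$ is a pre-specified probing signal independent of $\bfODEstate$, hence $\cqsaprobe_t$ is a deterministic function of $t$ alone.

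Next I would substitute $\chclYn_t = H^\transpose Z_t + J \clYn_t$ into the low-pass equation to obtain
\[
\ddt \ODEstate_t = -\sigma \ODEstate_t - \alpha \cqsaprobe_t H^\transpose Z_t - \alpha J \cqsaprobe_t \clYn_t,
\]
and stack this with the $Z_t$ dynamics to recover the block form
\[
\ddt X_t = \begin{bmatrix} -\sigma I & -\alpha \cqsaprobe_t H^\transpose \\ 0 & F \end{bmatrix} X_t + \begin{bmatrix} -\alpha J \cqsaprobe_t \\ G \end{bmatrix} \clYn_t
\]
announced in \eqref{e:ESC=QSA}. The upper-triangular structure is immediate since the high-pass state $Z_t$ is driven only by $\clYn_t$ and not by $\ODEstate_t$ directly.

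Finally, to justify the last sentence I would invoke the definition \eqref{e:normalizedObservations}, $\clYn_t = \clYn(\ODEstate_t,\qsaprobe_t) = \upepsilon(\ODEstate_t)^{-1}\Obj(\ODEstate_t + \upepsilon(\ODEstate_t)\qsaprobe_t)$, which exhibits $\clYn_t$ as a state-dependent nonlinear function of $\ODEstate_t$ driven by the input $\qsaprobe_t$, for either choice of $\upepsilon$ in \eqref{e:upepsy}. Together with the fact that the coefficient matrices in \eqref{e:ESC=QSA} depend only on the external input $\cqsaprobe_t$, this shows the system is a controlled nonlinear state space model with inputs $(\qsaprobe,\cqsaprobe)$, as claimed.

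There is no real obstacle here; the only delicate point is making explicit that the $d$ copies of the high-pass filter used to build $\cqsaprobe$ do not need to be represented in the state $X_t$, because $\cqsaprobe$ plays the role of an input rather than a dynamical variable — a convention already in force in the block diagram \Cref{fig:ESC}.
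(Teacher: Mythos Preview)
Your proposal is correct and is exactly the natural approach: the paper states this proposition without proof, since it follows by direct substitution of the high-pass output $\chclYn_t = H^\transpose Z_t + J\clYn_t$ into the low-pass equation and stacking. Your observation that the copies of the high-pass filter generating $\cqsaprobe_t$ need not appear in the state (because $\bfqsaprobe$ is exogenous) is the only point requiring comment, and you handle it correctly.
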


\Cref{t:PMF} can be freely applied to the state space representation \eqref{e:ESC=QSA} because the theorem makes no assumptions on the magnitude of $\alpha$, or even stability of the QSA ODE.

Recall that $\alpha$ is a fixed constant, so the fact that $f$ depends on this gain is irrelevant in the definition for the QSA vector field,
\begin{equation}
	f(x,\cqsaprobe, \qsaprobe) =    
	\begin{bmatrix}    
		- \frac{\sigma}{\alpha}  I   &  -     \cqsaprobe \rH ^\transpose
		\\
		0   & \frac{1}{\alpha}   \rF   
	\end{bmatrix}  x
	+
	\begin{bmatrix}    
		-  \rJ  \cqsaprobe   
		\\
		\frac{1}{\alpha}\rG 
	\end{bmatrix}   \clYn (\theta,\qsaprobe) \, .
	\label{e:ESCvectorField}
\end{equation}
Three solutions to Poisson's equation are required to write down the P-mean flow:
\whamrm{(i)}  The solution 
$\haclYn$ with forcing function $\clYn$.
\whamrm{(ii)} $ \hacqsaprobe$   with forcing function $ \cqsaprobe$ (similar to $\haG$ in \eqref{e:ESCapprox_haf}).  
\whamrm{(iii)}    $ \widehat{Q}$   with forcing  function $Q(\theta,\Phi) = -  \rJ  \cqsaprobe\clYn (\theta,\qsaprobe)$.

\begin{subequations}

	\begin{theorem}
		\label{t:PMFESC}
		If $\Obj$ is analytic, the  P-mean flow representation holds,
		\begin{equation} 
			\ddt X_t    =  \alpha[ \barf (X_t)    - \alpha  \barUpupsilon_t  +   \clW_t] \,,
			\label{e:ESC=QSA-PMF}
		\end{equation}
		in which for any $x=(\theta;\varsigma)$ and $z\in\prstate$,
		\begin{equation}
			\barf(x) =    
			\begin{bmatrix}    
				- \frac{\sigma}{\alpha}  I   & 0
				\\
				0   & \frac{1}{\alpha}   \rF   
			\end{bmatrix}  x
			+
			\begin{bmatrix}    
				-  \rJ  \Expect[  \cqsaprobe (\Phi)  \clYn (\theta,\qsaprobe]
				\\
				\frac{1}{\alpha}\rG \Expect[  \clYn (\theta,\qsaprobe)  ]
			\end{bmatrix}    \, ,
			\label{e:SuperBarf}
		\end{equation}
		with  expectations in steady-state.
		The functions   $\haf$    and $\barUpupsilon$ admit the representations,  
		\begin{align}
			\haf(x,z) & =    
			\begin{bmatrix}    
				0	   &  -     \hacqsaprobe(z )\rH ^\transpose
				\\
				0   & 0   
			\end{bmatrix}  x
			+
			\begin{bmatrix}    
				-  \rJ  \widehat{Q}(\theta,z)
				\\
				\frac{1}{\alpha}\rG \haclYn (\theta,z)
			\end{bmatrix}   \,  ,
			\label{e:BigESCfish}
			\\
			\Upupsilon(x,z) & = \frac{1}{\alpha} 
			\begin{bmatrix} \hacqsaprobe(z) \rH^\transpose \{ \rF x + \rG\clYn(\theta,\qsaprobe(z) )\}
				\\
				0
			\end{bmatrix} \, .
			\label{e:UpOopsESC}
		\end{align}
	\end{theorem}
\end{subequations}

\begin{proof}
 The expression  \eqref{e:BigESCfish} follows directly from \eqref{e:ESCvectorField}.  There is simplification because terms not involving  $   \qsaprobe$ or $\cqsaprobe$ vanish.  The formula \eqref{e:UpOopsESC} then follows from the definition   $\Upupsilon(x,z)  = -\partial_x\haf(x,z) f(x,z)   $.

\end{proof}

The interpretation of the P-mean field representation is entirely different here because $\Upupsilon$ is no longer a nuisance term, but a critical part of the dynamics.
Applications to design remains a topic for future research.

\wham{Interpretation as two time-scale QSA:}
The ODE \eqref{e:ESC=QSA} falls into this category,  where   $(Z_t, \Phi_t)$ represents the fast state variables. The basic principle is that we can approximate 
the solution to the $(d+q)$-dimensional state space model through the following quasi-static analysis.    For a given time $t$,   let $\{  \barZ_r  : r\ge t\}$ denote the solution to the state space model defining $\bfmZ$ with $\ODEstate_r \equiv \theta$ for all $-\infty < r<\infty$:
\[
 \barZ_r (\theta)  = \int_{-\infty}^r  e^{F (r-\tau)} G Y_\tau\, d\tau\,, \quad 
 					\textit{with} \ \  Y_\tau =  \frac{1}{\upepsilon(\theta) }   \clY(\theta + \upepsilon(\theta) \qsaprobe_\tau )   
 \]
The next step is to substitute the solution to obtain the approximate dynamics,
\begin{equation}
\ddt \ODEstate_t  \approx       
			-\sigma \ODEstate_t     - \alpha     \bigl[ \cqsaprobe_t H^\transpose \barZ_t(\ODEstate_t)  			   +   J \cqsaprobe_t    \clYn_t \bigr]
\label{e:QSAtwo-time}
\end{equation}
A more useful approximation is obtained on applying \Cref{t:Lip_qSGD}:
\[
\begin{aligned}
	\clYn_t
	& \approx  \frac{1}{\upepsilon(\theta)}   \Obj(\ODEstate_t)   +   \qsaprobe_t^\transpose \nabla \Obj\, (\ODEstate_t) \, ,
	\\
	\rH ^\transpose \barZ(\theta,  \Phi_t)  
	&\approx  \rH ^\transpose  \int_{-\infty}^t  e^{\rF  (t-\tau)} \rG \bigl\{  
	\frac{1}{\upepsilon(\theta)}   \Obj(\theta)   +   \qsaprobe_\tau^\transpose \nabla\, \Obj(\theta)  \bigr\}\, d\tau   
	\\
	& =h_0  \frac{1}{\upepsilon(\theta)}  \Obj(\theta)   +    \cqsaprobe_t^\transpose \nabla\, \Obj(\theta) \,  ,
\end{aligned} 
\]
where $h_0 = -\rH ^\transpose \rF^{-1} \rG$ is the DC gain of the high pass filter.

Substitution in \eqref{e:QSAtwo-time} gives
\[
\begin{aligned} 
	\ddt \ODEstate_t  \approx       
	-\sigma \ODEstate_t   &  - \alpha M_t \nabla  \Obj(\ODEstate_t) 
	- \alpha \cqsaprobe_t    \frac{1}{\upepsilon_t} (h_0+\rJ) \Obj(\ODEstate_t)   
	+O(\alpha \upepsilon)  \,,
	\\
	&\textit{with}\quad M_t = \cqsaprobe_t   [\cqsaprobe_t  + \rJ \qsaprobe_t ]^\transpose  \, .
\end{aligned} 
\]

Once tight error bounds in these approximations are established,  design and analysis proceeds based on the $d$-dimensional QSA approximation.   
In particular, without approximation this QSA ODE is stable provided the high pass filter is \textit{passive}, such as a lead compensator.   Passivity combined with the positivity of $\Sigmaqsa$ implies that $M+M^\transpose>0$ with
$M = \Expect_\uppi  [ M_t]$.

Theory for two time-scale QSA is not yet available to justify this approximation.   
The extension of stochastic theory is a topic of current research.

\subsection{Perturbative mean flow for qSGD}

\Cref{t:bigQSA+ESC} below is a summary of the conclusions of  \Cref{t:PMF} and  \Cref{t:Couple_theta} for qSGD ODEs.

Additional local stability structure is obtained in terms of Lyapunov exponents \eqref{e:LyapExp}, similarly to what is done in \Cref{t:ODEstate_is_bdd}. It will be seen in \Cref{t:bigQSA+ESC}  that a negative Lyapunov exponent exits for 2qSGD subject to conditions on the objective function and the gains.

%\

  In \eqref{e:LTIESClin} the  time $t_0$ is chosen so that $\| \ODEstate_t - \theta^*\| \le B \alpha $ for some $B$, and all $t\ge t_0$.   

\begin{subequations}

\begin{theorem}[QSA Theory for ESC]
\label[theorem]{t:bigQSA+ESC}
Consider an objective function $\Obj$ satisfying the following:  
\begin{romannum}
\item    $\Obj$ is analytic with Lipschitz gradient satisfying  $\|  \nabla\Obj (\theta)\| \ge \delta\|\theta\| $  for some $\delta>0$ and all $\theta$ satisfying $\|\theta\|\ge \delta^{-1}$.   
   
\item  The objective has a  unique minimizer $\theta^\opt$,  and it is the only solution to $\nabla\Obj (\theta) =\Zero$.

\item  $P = \nabla^2\Obj\, (\theta^\opt)$ positive definite.

\end{romannum} 

Consider 1qSGD or 2qSGD with constant gain $\alpha$.  Suppose that the probing signal is chosen of the form 		\eqref{e:qSGD_probe} with $K=d$,    frequencies satisfying \eqref{e:logFreq}, and
   $\Sigmaqsa>0$.

Then, for either algorithm, there are positive constants  $\epsy^0$ and $\alpha^0$ such that the following approximations are valid for $0<\epsy\le \epsy^0$,  $0<\alpha\le \alpha^0$:     
\begin{align}
   \ddt \ODEstate_t   &=    \alpha \bigl[ -  \Sigmaqsa \nabla\Obj\, (\ODEstate_t)  + \clW_ t  + O(  \epsy^2)  \bigr]  \,,  &&  t\ge 0
\label{e:LTIESC}
\\
   &=    \alpha \bigl[ - \Sigmaqsa P  [\ODEstate_t  -\theta^\opt]  + \clW_ t  + O(\alpha^2 +   \epsy^2) +o(1)  \bigr]  \,, && t\ge t_0
\label{e:LTIESClin}
\end{align} 
Moreover, the
 following hold for $0<\epsy\le \epsy^0$,  $0<\alpha\le \alpha^0$:
\begin{romannum}
\item   $\|  \ODEstate_t -\theta^\opt \| \le O( \alpha+\epsy^2) + o(1)$  

\item   $\|  \ODEstateF_t -\theta^\opt \| \le O( \alpha^2+\epsy^2) + o(1)$,  with filtered estimate obtained using the criteria of \Cref{t:Couple_theta}.
\end{romannum}
In addition,  the Lyapunov exponent \eqref{e:LyapExp} is negative in the special case of   2qSGD.
\end{theorem}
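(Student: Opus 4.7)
\textbf{Proof Proposal for \Cref{t:bigQSA+ESC}.}

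The plan is to verify that the qSGD ODEs fit the framework of \Cref{t:PMF} and \Cref{t:Couple_theta}, then read off the conclusions. First I would check Assumptions (A0)--(A4). Assumption (A0a)--(A0b) holds by hypothesis on the probing signal together with the frequency condition \eqref{e:logFreq}. For (A1), the vector fields \eqref{e:1qSGD}--\eqref{e:2qSQD} are made globally Lipschitz in $\theta$ by adopting the state-dependent probing gain of \eqref{e:upepsy1} or \eqref{e:upepsy2}, invoking \Cref{t:Lip_qSGD}: the growth assumption $\|\nabla\Obj(\theta)\|\ge \delta\|\theta\|$ is precisely what licenses option (ii) of that proposition. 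Analyticity of $\Obj$ together with analyticity of $G$ yields (A2)--(A3) via \Cref{t:hah}, so the smooth solutions $\haf$, $\hahaf$, $\haUpupsilon$ exist.

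Next I would compute the mean vector field. Using the exact Taylor representation \eqref{e:1qSPSA_FTC}--\eqref{e:2qSPSA_FTC} adapted to the deterministic probing signal, integrating term by term over one period of $\bfPhi$, and using the fact that the entries of $\qsaprobe$ have zero time average with $\Expect[\qsaprobe\qsaprobe^\transpose]=\Sigmaqsa$, one obtains
\[
\barf(\theta) = -\Sigmaqsa\,\nabla\Obj(\theta)  + O(\epsy^2),
\]
uniformly on compacta (the odd-order terms drop by symmetry of the sinusoidal probing, as in the standard SPSA bias computation). This verifies that $\barf$ is an $O(\epsy^2)$ perturbation of the gradient flow $-\Sigmaqsa\nabla\Obj$. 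Assumption (A4) then follows from the hypotheses on $\Obj$: the ODE@$\infty$ is the gradient flow of the radially homogeneous $\Sigmaqsa\,\Obj_\infty$, which is exponentially asymptotically stable by \Cref{t:gradFlowODE}; this yields (V4') via \Cref{t:V4TfromBM}, hence $\alpha^0$-ultimate boundedness by \Cref{t:ODEstate_is_bdd}. Strong convexity at $\theta^\opt$ makes $\barA(\theta^\opt)=-\Sigmaqsa P$ Hurwitz, and a standard perturbation argument gives the existence of a unique equilibrium $\theta^\ocp$ with $\|\theta^\ocp-\theta^\opt\|=O(\epsy^2)$.

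With (A1)--(A4) in place, \Cref{t:PMF} delivers the P-mean flow representation, which after substituting $\barf=-\Sigmaqsa\nabla\Obj+O(\epsy^2)$ is exactly \eqref{e:LTIESC}; the linearization \eqref{e:LTIESClin} then follows by Taylor expanding $\nabla\Obj(\ODEstate_t)$ around $\theta^\opt$ on the time interval $t\ge t_0$ on which $\|\ODEstate_t-\theta^\opt\|=O(\alpha+\epsy^2)$. The bias bounds (i) and (ii) follow from \Cref{t:Couple_theta}, where the crucial point is that the log-frequency condition \eqref{e:logFreq} places us in the transcendental almost periodic case of (A0b), so \Cref{t:orthCorBigPre}(ii) gives $\barUpupsilon\equiv 0$ and consequently $\barY^\ocp=0$. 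The offset $\|\theta^\ocp-\theta^\opt\|=O(\epsy^2)$ is then absorbed into the $O(\epsy^2)$ terms.

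Finally, for the Lyapunov exponent claim for 2qSGD, I would differentiate \eqref{e:2qSQD} in $\theta$ to obtain
\[
A(\theta,\qsaprobe) \;=\; -\qsaprobe\qsaprobe^\transpose \nabla^2\Obj(\theta) + O(\epsy^2),
\]
whose steady-state mean at $\theta^\opt$ is $-\Sigmaqsa P$, Hurwitz by hypothesis (iii). Since bias bound (i) keeps $\ODEstate_t$ in an $O(\alpha+\epsy^2)$ neighborhood of $\theta^\opt$ for $t\ge t_0$, the sensitivity process \eqref{e:SensitivityODE} is governed, up to an $O(\alpha+\epsy^2)$ perturbation, by the quasi-periodic linear system with coefficient $A(\theta^\opt,\qsaprobe_t)$, to which \Cref{t:QPL} applies and yields $\Lambda_\ODEstate^\alpha = \alpha\,\text{Re}(\lambda_1(-\Sigmaqsa P))+o(\alpha)<0$. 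The main obstacle I anticipate is the bookkeeping required to propagate the two independent small parameters $\alpha$ and $\epsy$ through the perturbative mean flow; in particular, showing that the $O(\epsy^2)$ residual in $\barf$ does not degrade the $O(\alpha^2)$ filtered bias requires checking that the filter in \eqref{e:GVSOS} attenuates the slow $\epsy^2$ component with DC gain one, contributing an additive $O(\epsy^2)$ rather than inflating it.
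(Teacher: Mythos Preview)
Your overall architecture matches the paper's: verify (A0)--(A4), invoke \Cref{t:PMF} together with \Cref{t:orthCorBigPre} to obtain \eqref{e:LTIESC} with $\barUpupsilon\equiv 0$, Taylor-expand $\nabla\Obj$ about $\theta^\opt$ for \eqref{e:LTIESClin}, read off the bias bounds from \Cref{t:Couple_theta}, and for 2qSGD reduce the sensitivity ODE to the quasi-periodic linear setting of \Cref{t:QPL}. The paper packages the $\barf$ and $A$ approximations as \Cref{t:ESCapproximation} and \Cref{t:ESCapproximation2} rather than redoing the Taylor computation, and records $A(\theta,\qsaprobe)=-\qsaprobe\qsaprobe^\transpose\nabla^2\Obj(\theta)+O(\epsy)$ rather than your $O(\epsy^2)$ (the state-dependent gain $\upepsilon(\theta)$ contributes $O(\epsy)$ terms when differentiated), but this does not affect the conclusion.

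There is, however, a genuine gap in your verification of (A4). You obtain ultimate boundedness via the ODE@$\infty$ route (\Cref{t:gradFlowODE} $\Rightarrow$ \Cref{t:V4TfromBM} $\Rightarrow$ \Cref{t:ODEstate_is_bdd}), but this presupposes that the limit $\barf_\infty$ in \eqref{e:barfinfty_QSA} exists, and the stated hypotheses on $\Obj$ do not guarantee it. A one-dimensional counterexample: $\Obj(\theta)=\int_0^\theta s\bigl(2+\sin(\log(1+s^2))\bigr)\,ds$ is real-analytic, has bounded second derivative (hence Lipschitz gradient), satisfies $|\Obj'(\theta)|\ge |\theta|$, and has unique critical point $\theta^\opt=0$ with $\Obj''(0)=2>0$; yet $r^{-1}\Obj'(r\theta)=\theta\bigl(2+\sin(\log(1+r^2\theta^2))\bigr)$ oscillates as $r\to\infty$, so neither $\Obj_\infty$ nor $\barf_\infty$ exists. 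The paper avoids this by invoking \Cref{t:Lip_implies_V4}, which directly exhibits the Lipschitz Lyapunov function $V(\theta)=\sqrt{\Obj(\theta)-\Obj^\opt}$ satisfying (V4) under precisely the hypotheses you have (Lipschitz gradient plus the coercivity bound $\|\nabla\Obj(\theta)\|\ge\delta\|\theta\|$), and then applies \Cref{t:ODEstate_is_bdd}. Swapping in that proposition closes the gap with no change to the rest of your argument.
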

 
\end{subequations}
\begin{proof}
The analytic assumption is imposed so that (A3) holds, thanks to  \Cref{t:hah}. 

\Cref{t:Lip_implies_V4}  tells us that  the QSA ODE is $\alpha^0$   ultimately bounded  for some $\alpha^0>0$.   
The estimation error bounds in (i) and (ii) are then immediate from   \Cref{t:Couple_theta}.
%\notes{} 

Justification of \eqref{e:LTIESC} follows from \Cref{t:PMF} and the approximation $  
\barf( \theta)   = - \Sigmaqsa\nabla\, \Obj(\theta)   +  O(\epsy^2)   $ obtained for 1qSGD in \Cref{t:ESCapproximation}    and for 2qSGD in \Cref{t:ESCapproximation2}.    
 The term $\barUpupsilon_t$ appearing in \eqref{e:BigGlobalODE} is zero on application of \Cref{t:varpi}~(iii).

The approximation \eqref{e:LTIESClin} then follows from  \eqref{e:LTIESC}  and the Taylor series approximation
\[
\nabla\Obj\, (\ODEstate_t)     =   \nabla\Obj\, ( \theta^\opt)   + P  [\ODEstate_t  -\theta^\opt]   +   O (\| \ODEstate_t  -\theta^\opt \|^2)
\]  
The first term above is zero by assumption, $   \nabla\Obj\, ( \theta^\opt)   =\Zero$,   and $ \| \ODEstate_t  -\theta^\opt \|  \le \| \ODEstate_t  -\theta^\ocp \|  +  \| \theta^\ocp  -\theta^\opt \|  = O(\alpha + \epsy^2)$ for   $t\ge t_0$,  giving $ \| \ODEstate_t  -\theta^\opt \|^2  \le  O(\alpha^2 + \epsy^4)$  (recall that $\| \ODEstate_t - \theta^*\| \le B \alpha $ for $t\ge t_0$).
These approximations imply \eqref{e:LTIESClin}.

The proof that $\Lambda_{\ODEstate} <0$ for 2qSGD  follows from identifying the linearization matrix used in the LTI approximation \eqref{e:BigGlobalODElin}.   The following is established in  \Cref{t:ESCapproximation2}:  
\[
A( \theta,\qsaprobe)    =   A^0( \qsaprobe) +  O(\epsy)  \,,\qquad \textit{with }   A^0( \qsaprobe)  =
 - \qsaprobe \qsaprobe^\transpose \nabla^2\, \Obj(\theta)   
\]
in which the error $O(\epsy)$ depends smoothly on $\theta$.       
Hence  the dynamics  \eqref{e:SensitivityODE} for the 
 sensitivity process can be expressed
\begin{equation}
\ddt \clS_t^0  = -  \alpha  [ (\alpha+ \epsy^2) M_t +  \qsaprobe_t  \qsaprobe^\transpose_t P  ] \clS_t^0\, ,
						 \qquad t\ge t_0\,,
\label{e:SensitivityODE2aSGD}
\end{equation}
where  the norm of the matrix process $\{ M_t : t\ge t_0 \}$ is bounded by a constant that is
 independent of $\epsy$ or $\alpha$,  though $t_0$ is dependent on these parameters.
 
 The remainder of the proof is then identical to the proof of  \Cref{t:QPL}.     
%.   
\end{proof}

Approximations in \Cref{s:ESCtheory} suggest that a negative Lyapunov exponent is not likely for any variant of 1qSGD.    We obtain in this case,
\[
A( \theta,\qsaprobe)   =  
 -  \partial_\theta\Bigl( \frac{1}{\upepsilon(\theta)}   \Obj(\theta) \Bigr) \qsaprobe
 - \qsaprobe \qsaprobe^\transpose \nabla^2\, \Obj(\theta)   +  O(\epsy)  
\]
If $\upepsilon =\epsy$, independent of $\theta$,  this simplifies to 
\[
A( \theta,\qsaprobe)   =  
 -   \Bigl( \frac{1}{ \epsy}    \qsaprobe   \nabla^\transpose  \Obj\, (\theta) 
+ \qsaprobe \qsaprobe^\transpose \nabla^2\, \Obj(\theta)  \Bigr)  +  O(\epsy)  
\]
It is likely that the Lyapunov exponent is negative for small $\epsy>0$ and  $\alpha\ll \epsy$, but this is not an interesting setting.

\section{Examples}
\label{s:Experiments}

In each of the following experiments\footnote{Publicly available code for experiments were obtained under a GNU General Public License v2.0.}, the QSA ODEs were implemented using an Euler approximation with time discretization of $1~$sec. The probing signal respected \eqref{e:qSGD_probe} with $\{\omega_i\}$ irrationally related.

%\bl{}

\subsection{Optimization of Rastrigin's Objective}
\label{s:Opt_Rast_NIPS}
We illustrate the fast rates of convergence of QSA with vanishing gain by surveying results from an experiment in \cite{laumey22,laumey22e}. %This will be a running example for the current paper.
\paragraph{Simulation Setup}
%
%, 
The $1$qSGD algorithm \eqref{e:1qSGD} and its stochastic counterpart \eqref{e:1qSPSA} were used for the purpose of optimizing the Rastrigin Objective in $\Re^2$
\begin{equation}
% \Obj(\theta) = \NoA d+\sum _{i=1}^d\left[\theta_{i}^{2}-\NoA\cos(2\pi \theta_{i})\right]  \,,\qquad \theta\in\Re^d
\Obj(\theta) = 20 +\sum _{i=1}^2\left[\theta_{i}^{2}-10\cos(2\pi \theta_{i})\right]  
\label{e:Rastrigin}
\end{equation}  
The experiments were conducted before discovery of the Lipschitz variant of $1$qSGD so $\upepsilon(\theta) = \epsy=0.25$ and the sample paths of $\bfODEstate$ were projected onto $[-B,B]^2$ with $B=5.12$, which is the evaluation domain commonly used for this objective \cite{simulationlib}. The QSA ODE was implemented through an Euler scheme with time sampling of $1~$sec with $a_t = \min\{0.5,(t+1)^{-0.85}\}$. $M=200$ independent experiments were run with $\ODEstate^m_0$ uniformly sampled from $[-B,B]^2$ for $\{1 \leq m \leq M\}$. For each experiment, the probing signal was selected of the form
\begin{equation}
	\qsaprobe^m_t = 2 [\sin(t/4+ \phi^m), \sin(t/e^2+ \phi^m)]^\transpose
	\label{e:exp_qsaprobe}
\end{equation}
where for each $m$, $\phi^m$ was uniformly sampled from $[-\pi/2, \pi/2]$. 
The noise for stochastic algorithm was a scaled and shifted Bernoulli with $p=1/2$ and had support in the set $\{-\sqrt{2}, \sqrt{2}\}$. The values $\pm \sqrt{2}$ were chosen so that the covariance matrices in both algorithms were equal.

In order to test theory for rates of convergence with acceleration through PR averaging, the empirical covariance was computed for $1$qSGD across all initial conditions,
\begin{equation}
	\begin{aligned}
	\barSigma_T & = \frac{ 1}{M} \sum_{i=1}^{M} \ODEstate_T^i  {\ODEstate_T^i }^\transpose   -  \barODEstate_T \barODEstate_T^\transpose\,,\qquad \barODEstate_T= \frac{ 1}{M} \sum_{i=1}^{M} \ODEstate_T^i \\ 
		\hat{\sigma}_T & = \sqrt{\tr(\barSigma_T)}
	\end{aligned}
	\label{e:empCov}
\end{equation}
In each experiment the trajectories $\{\ODEstate_t\}$ were accelerated through PR averaging with $\kappa =5$. If the scaled empirical covariance $a_T^{-2} \hat{\sigma}_T$ is bounded in $T$, we have $\|\ODEstatePR_T -\theta^* \| = O(T^{-2\rho})$.

\begin{figure}[h]%{0.4\textwidth}
	\includegraphics[width=1\hsize]{./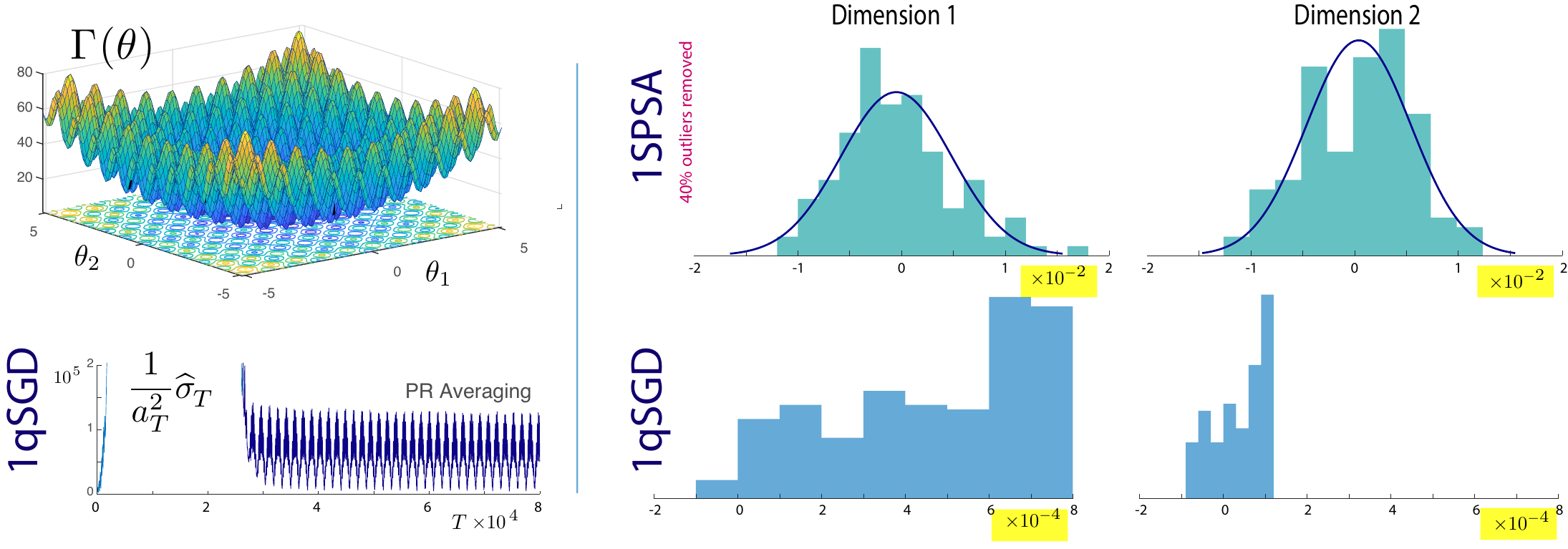}
	\caption{Rastrigin objective (top left), scaled empirical covariance (bottom left), histograms of estimation error for $1$SPSA with PR averaging (top middle and top right), histograms of estimation error for $1$qSGD with PR averaging (bottom middle and bottom right).}\label{fig:Rastrigin_results}
\end{figure} 

\paragraph{Results}
\Cref{fig:Rastrigin_results} shows plots for the scaled empirical covariance for $1$qSGD as well as histograms for the estimation error $\ODEstatePR_T - \theta^*$ for both algorithms. We see in \Cref{fig:Rastrigin_results} that the convergence rates of $O(T^{-2\rho})$ are achieved by both filtering techniques. 

The variance of the estimation error for the deterministic algorithm is much smaller than for its stochastic counterpart: \Cref{fig:Rastrigin_results} shows that the reduction is roughly two orders of magnitude.

Outliers were identified with Matlab's \textit{isoutlier} function and removed from the histograms in \Cref{fig:Rastrigin_results}. Around $40\%$ of the estimates were considered outliers for the stochastic algorithm, while none was observed for its deterministic counterpart. 

%\notes{}

Recall that bias is inherent in both 1qSGD and 1SPSA when using a non-vanishing probing gain.
It would appear from \Cref{fig:Rastrigin_results} that this bias is larger for 1qSGD  [consider the histogram for estimates of $\theta_1^\ocp=0$, which shows that final estimates typically exceed this value].    In fact, there is no theory  to predict if 1qSGD is better or worse than 1SPSA in terms of bias.    The bias is imperceptible in the stochastic case due to the higher variance combined with the removal of outliers.  

%\notes{}

\subsection{Vanishing vs Fixed Gain Algorithms -  Rastrigin's Objective}
\label{s:vanishing_vs_fixed_gain}
Experiments were performed to test   the performance of the Lipschitz version of $1$qSGD for both constant and vanishing gain algorithms.   Two take-aways from the numerical results surveyed below:
\begin{romannum}
	\item 
	The value of a second order filter can be substantial when using a fixed gain algorithm.
	Recall that this conclusion was not at all clear for either of the linear examples 	(\ref{e:lin_barY0}, \ref{e:lin_barYnot}),  even though in the first case the example satisfies the assumptions of \Cref{t:Couple_theta}.

	\item  
	However, in this example the flexibility of a vanishing gain algorithm is evident: 
	with high gain during the start of the run  (when $a_t\approx \alpha$) there is a great deal of exploration,  especially when $\ODEstate_t$ is far from the optimizer  due to the use of $\upepsilon(\theta)$.
	The vanishing gain combined with the use of distinct frequencies  then justifies the use of PR-averaging, which results in extremely low AAD in this example.
\end{romannum}

\paragraph{Simulation Setup}
For this objective, $M=5$ individual experiments were carried out in a similar fashion to the experiments summarized in \Cref{s:Opt_Rast_NIPS}. The Lipschitz variant of $1$qSGD was implemented using each of the initial conditions so projection of the sample paths $\{\ODEstate^m_t\} $ was not necessary. Each experiment used $\upepsilon(\theta) = \epsy \sqrt{1 + \|\theta \|^2}$ with $\epsy = 0.6$ and the same probing signal $\qsaprobe^m_t$ as in \eqref{e:exp_qsaprobe}. The following three choices of gain were tested for each initial condition:
\[
\text{(a) } a_t = 0.1 (t+1)^{-0.65}  
\quad \text{(b) } a_t = \alpha_b = 3 \times 10^{-3} 
\quad \text{(c) } a_t = \alpha_s = 7 \times 10^{-4}
\]
For the vanishing gain case (a) the process $\{\ODEstate^m_t\}$ was accelerated through PR averaging with $\kappa =5$, while for the fixed-gain case two filters of the form \eqref{e:filter_numerics} where used with $\zeta =0.8$ and $\gamma=\eta  \alpha$ for various values of $\eta \geq 1$. These experiments were repeated for $\ODEstate^m_0$ uniformly sampled from $10^{10}[-B,B]^2$ with $B =5.12$.

\paragraph{Results}
The top row of \Cref{fig:Rastrigin_comp} shows the evolution of $\{\ODEstate^m_t\}$ for each $m$ with $\eta=5$. The second row shows the evolution of $\{\Obj(\ODEstatePR_T),\Obj({\ODEstate}^{1 \text{\tiny\sf F}}_T),\Obj({\ODEstate}^{2 \text{\tiny\sf F}}_T),\Obj(\ODEstate_T)\}$ for the single path yielding the best performance for each gain choice across all 5 initial conditions.

We see in \Cref{fig:Rastrigin_comp} a clear advantage of vanishing gain algorithms: the algorithm explores much more in the beginning of the run, eventually deviating from $\theta^\opt$ by the bias inherent to the $1$qSGD algorithm.

For the runs that used $\alpha_b$ (case (b)), we have a good amount of exploration, but the steady state behavior is poor. Case (c) using the smaller value of $\alpha$ yielded better results.

\Cref{fig:Rastrigin_comp} shows the benefit of variance reduction from second order filter as opposed to a first order filter, based on runs that used $\alpha_s$. As opposed to the results in \Cref{fig:alpha2bias}, this example shows that we can't always obtain the best AAD with a first order filter. For these experiments, the filtered final estimation of $\theta^*$ yields comparable objective values to what is seen in the vanishing gain experiment. However, we do not always observe desirable exploration as in the vanishing gain case: we see several sample paths hovering around local minima of the objective rather than $\theta^\opt$.

Results for the trajectories with initial conditions of order $10^{10}$ were presented in \Cref{fig:ODEinfty}.

\begin{figure}[h]
	\includegraphics[width=1\hsize]{./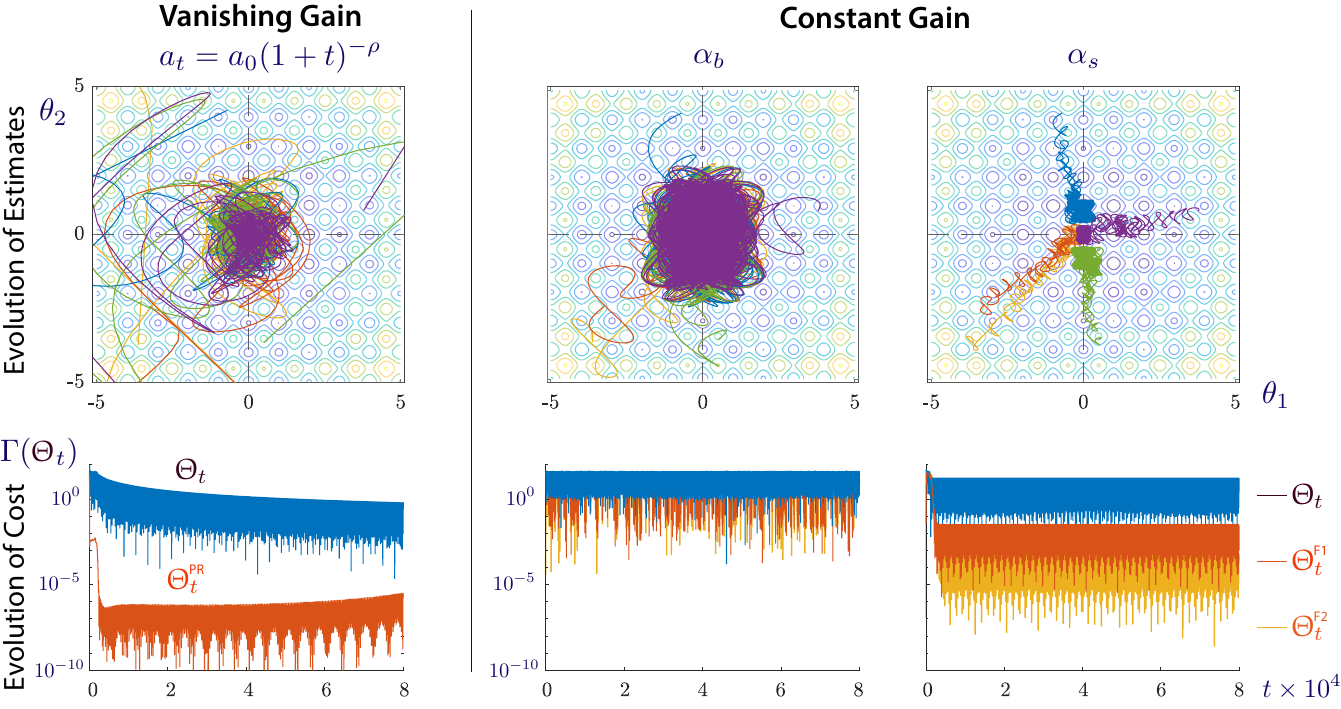}
	\caption{Comparison of the performance between vanishing and constant gain $1$qSGD for the Rastrigin Objective.}\label{fig:Rastrigin_comp}
\end{figure}

\subsection{The Walking Camel} 
\label{s:Camel_Walk}

The next numerical results are based on a tracking problem using the $1$qSGD algorithm. Although the results in this subsection will show that the algorithm successfully tracks the signal $\{\theta^\opt_t\}$, we are not advocating the use of $1$qSGD for tracking applications. The algorithm was employed to illustrate general algorithm design principles;
better performance is likely in algorithms with a carefully designed low pass filter such as \eqref{e:ESC_LP} with $\sigma>0$. Several experiments were conducted to address the following questions: 
\begin{romannum}
	\item 
	How does the frequency content of the probing signal affect performance?
	
	\item
	How does the rate of change of the signal $\{\theta^\opt_t\}$ impact tracking performance?
	
	\item  
	How does filtering reduce estimation error and variance in tracking?
	
\end{romannum}

\begin{figure}[h]%{0.4\textwidth}
	\includegraphics[width=1\hsize]{./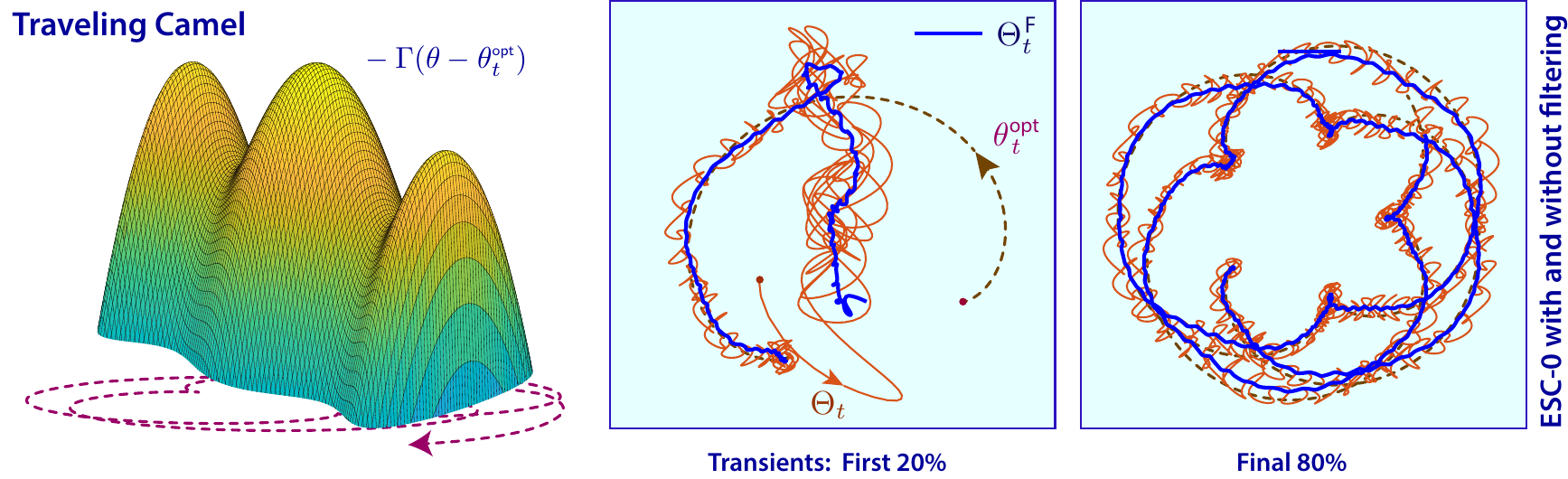}
	\caption{Tracking for the travelling camel with ESC-0}
\label{fig:moving_camel_lotus}
\end{figure} 

\paragraph{Simulation Setup}

The \textit{Three-Hump Camel} objective is the sixth-order polynomial function on $\Re^2$:
\begin{equation}
  \Obj(\theta) =2\theta_1^{2}     +\theta_1\theta_2+\theta_2^{2}
			+\tfrac{1}{6} \theta_1^6 -1.05\theta_1^{4}
\label{e:3Hump}
\end{equation}
A plot of the negative of this objective is shown on the left hand side of \Cref{fig:moving_camel_lotus}.   
%*.* July 15
Observe that the infimum of $\Obj$ over $\Re^2$ is not finite. 
The domain of this function is usually taken to be the bounded region $[-5,5]^2$   \cite{simulationlib}.

We considered a time varying version, of the form
\[
\Obj_t = \Obj(\theta - \theta^\opt_t)
\]
with the following choices of $\{\theta^\opt_t\}$:
\begin{equation}
	\begin{aligned}
		&\text{(a) }  \theta^\opt_t = \begin{bmatrix}
		 m \cos(\omega^\circ t) - h\cos(m\omega^\circ t/b^\circ) \\                                                                                      
		 m \sin(\omega^\circ t) -h\sin(m\omega^\circ t/b^\circ)
		\end{bmatrix}
	 \quad
		\text{where }
		 \omega^\circ = 2 \times 10^{-3}~\text{rad/sec} , b^\circ = 3/5, m =8/5, h = 0.6	\\
		&\text{(b) }\theta^\opt_t =  
		\begin{bmatrix}
			1\\1
		\end{bmatrix} 
	g_t, 
	\quad \text{where } 
	g_t=	\begin{cases} 
			\Delta(t/T_0),& t\leq T/2 \\
			\Pi(t/T_0) ,& t>T/2 
		\end{cases} \quad
		\text{and }  
		T_0=  \frac{c}{b} 
	~\text{sec with } b \in \{ 1, 3 , 5   \}, c = 5 \times 10^{-3} 
\end{aligned}
\label{e:theta_opt}
\end{equation}   %*.*  July 15
The choice (a) is an example of an epitrochoid path.  In (b) the notation $\Delta,\Pi$ is used to denote the unit triangle and square waves, respectively. The objective $\Obj_t$ was normalized so that $\theta^\opt = \Zero$.

Experiments were conducted for a single initial condition $\ODEstate_0$, chosen to be 
		a
 second (non-optimal) local minima of the objective, and the following probing signals:
\begin{equation}
\qsaprobe^a_t = 2 \begin{bmatrix}\sin(t/9)\\\sin(t/e^3) \end{bmatrix}, \quad \qsaprobe^b_t = 2 \begin{bmatrix} \sin(t/4) \\ \sin(t/e^2) \end{bmatrix},\quad \qsaprobe^c_t = 2 \begin{bmatrix} \sin(\sqrt{20}t/10) \\ \sin(\pi t/10)  \end{bmatrix}
\label{e:probingcamel}
\end{equation}
The assumptions of \Cref{t:Lip_qSGD}  are violated in this example, since the gradient of this objective is not Lipschitz continuous.     For this reason, the state-dependent gain  $\upepsilon$ was abandoned in favor of a constant gain in these experiments.
Choices for the parameters $\upepsilon(\theta) \equiv \epsy$ and $\alpha$ were problem specific: (a) $\epsy=0.2$ and $\alpha = 6 \times10^{-3}$, (b) $\epsy=0.5$ and $\alpha = 5\times10^{-3}$.

%*.* July 15    Moved reference to simulationlib  up
The process $\{\ODEstate_t\}$ was projected onto the set $[-5,5]^2$.
The filters in \eqref{e:filter_numerics} were used for variance reduction,
 with $\zeta =0.8$ and $\gamma= \eta \alpha$ for several $\eta \geq 1$.

\begin{figure}[h]%{0.4\textwidth}
\centering
\includegraphics[width=0.85\textwidth]{./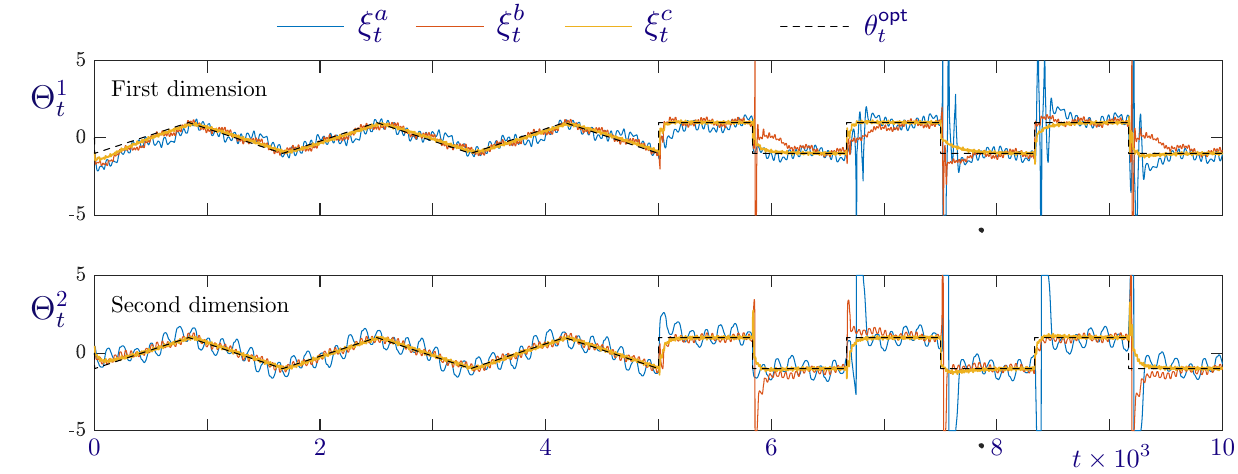}
\caption{Tracking of $\{\theta^\opt_t\}$ for several choices of probing signal.}
\label{fig:moving_camel_qsaprobe}
\end{figure}

\bigbreak

\wham{Results}  

\wham{Case (a)}
Phase plots of the trajectories $\{\ODEstate_t\},\{{\ODEstate}^{1 \text{\tiny\sf F}}_t \} $ are shown in \Cref{fig:moving_camel_lotus}. These plots display results with the probing signal $\qsaprobe^b_t$ and $\eta =5$. It is possible to see in \Cref{fig:moving_camel_lotus} that the output of $1$qSGD tracks $\{ \theta^\opt_t\}$ after a transient period. Since the gain is non-vanishing, $\bfODEstate$ oscillates around the trajectory $\{\theta^\opt_t\}$. As expected by \Cref{t:Couple_theta}, the filtered process has much less variability.

\wham{Case (b)}

\Cref{fig:moving_camel_qsaprobe} shows the evolution of $\{\ODEstate_t\}$ using each probing signal in \eqref{e:probingcamel}. Here, the signal $\{\theta^\opt_t\}$ has period $T_0$ with $b=3$ \eqref{e:theta_opt}. The plots in \Cref{fig:moving_camel_qsaprobe} illustrate how the frequency content of the probing signal impacts tracking performance: higher frequencies yield less variability over most of the run.

\begin{figure}[h]
 \centering
\includegraphics[width=0.85\textwidth]{./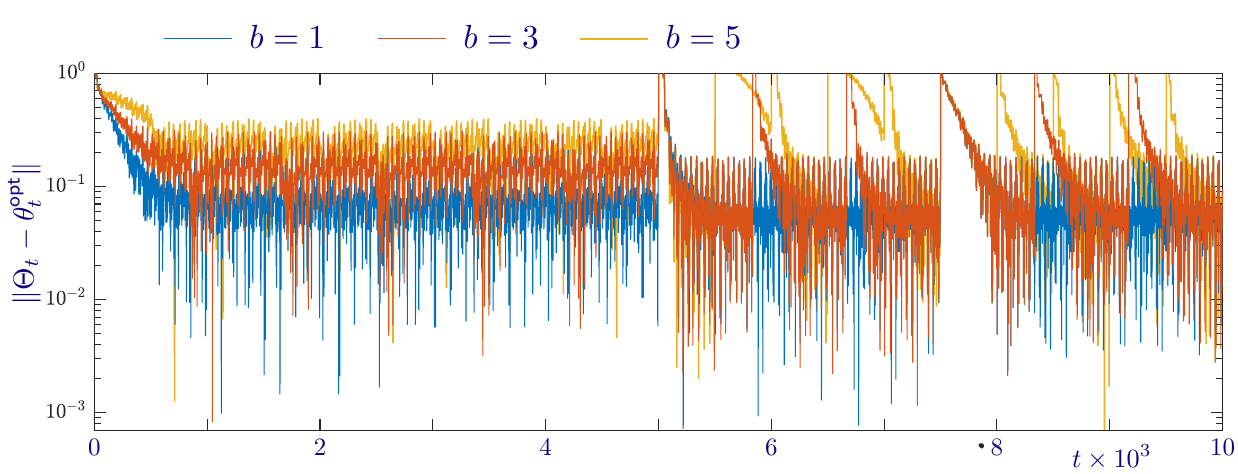}
\caption{Estimation error for different $b$ \eqref{e:theta_opt}.}
\label{fig:moving_camel_norm}
\end{figure} 

%\notes{}  

The evolution of the estimation error $\{\|\ODEstate_t - \theta^\opt_t \|\}$ is shown in \Cref{fig:moving_camel_norm} for each value of $b$ listed in \eqref{e:theta_opt}. Only $\qsaprobe^c_t$ in \eqref{e:probingcamel} was used as the probing signal for these experiments. We see in \Cref{fig:moving_camel_norm} that tracking performance deteriorates as the rate of change of $\{\theta^\opt_t\}$ increases. When $t>T/2$, the signal $\{\theta^\opt_t\}$ is nearly static for each period and hence we expect the rate of change of $\{\theta^\opt_t\}$ to have little effect on the AAD.
 This is confirmed by the results in shown in \Cref{fig:moving_camel_norm}: the steady state estimation errors are roughly the same for all values of $b$.
 %\notes{}

\Cref{fig:moving_camel_eta} shows the evolution $\{\Obj(\ODEstate_t), \Obj(  {\ODEstate}^{1 \text{\tiny\sf F}}_t  ), \Obj( {\ODEstate}^{2 \text{\tiny\sf F}}_t) \}$ as functions of time for $\eta \in \{5,15\}$. Again, the selected probing signal was $\qsaprobe^c_t$ and the signal $\{\theta^\opt_t\}$ had period $T_0$ with $b=3$ for both choices of $\eta$. We see in \Cref{fig:moving_camel_eta} that the filtered estimates $\{{\ODEstate}^{1 \text{\tiny\sf F}}_t\}$ yield lower objective values than $\{{\ODEstate}^{2 \text{\tiny\sf F}}_t\}$ for $t \leq  T/2$. When $t>T/2$, the estimates $\{{\ODEstate}^{1 \text{\tiny\sf F}}_t\}$ only yield lower objective values than $\{{\ODEstate}^{2 \text{\tiny\sf F}}_t\}$ for very small portions of the run. That is, a second order filter is preferable when the rate of change of $\{\theta^\opt_t\}$ is small.

Recall that the parameter $\eta$ scales the bandwidth of the filter. We see in \Cref{fig:moving_camel_eta} an improvement in tracking for both filters when $\eta$ is increased as a consequence of the increased bandwidth.

\begin{figure}[h]
 \centering
 \includegraphics[width=0.85\textwidth]{./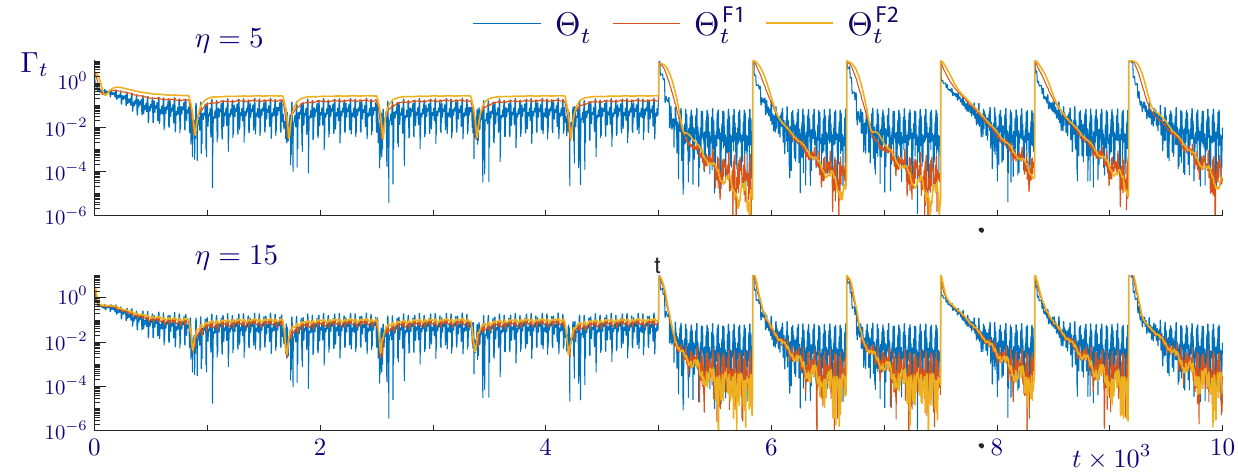}
\caption{Evolution of $\Obj$ for different filtering techniques.}\label{fig:moving_camel_eta}
\end{figure} 

 \clearpage

%%%%%%%%%%% %%%%%%%%%%% %%%%%%%%%%% %%%%%%%%%%% 
\section{Conclusions}
\label{s:conc}
 This paper introduces a new exact representation 
 of the QSA ODE, opening several doors for analysis.     
Major outcomes
of the perturbative mean flow representation include the clear path to obtaining transient bounds based on those of the mean flow, and guidelines on how to design filters to obtain bias 
	and AAD
of order $O(\alpha^2)$. There is much more to be unveiled:

\witem 
The use of filtering for acceleration of algorithms is not at all new.  
It will be exciting to investigate the implications of the acceleration techniques pioneered by  Polyak and Nesterov for nonlinear optimization,
particularly in their modern form  (see \cite[\S~2.2]{les22}, \cite{mohrazjov20} and the references therein). 

The integration of these two disciplines may provide insight on how to design the high pass filters shown in  \Cref{fig:ESC}, or suggest entirely new architectures.

\witem 
The introduction of normalization in the observations in the general form \eqref{e:normalizedObservations} was crucial to obtain global stability of ESC ODEs.   There are many improvements to consider.   First,   on considering the Taylor series approximation \eqref{e:clYnTS}, performance is most likely improved via a second normalization:
\[
\clYn_t =   \frac{1}{\upepsilon_t} \Bigl[ \Obj(\ODEstate_t + \upepsilon_t \qsaprobe_t)  -   \Obj^\star_t \Bigr] \,, 
\]
in which $\{ \Obj^\star_t \}$ are estimates of the minimum of the objective.   These might be obtained by passing $\{ \clY_r \eqdef \Obj(\ODEstate_r + \upepsilon_r \qsaprobe_r) \}$ through a low pass filter.  

\witem 
Far better performance might be obtained through an observation process inspired by  2SPSA.  Consider first a potential improvement of 2SPSA:   A state dependent exploration gain is introduced, so that
\eqref{e:2qSPSA} becomes
\[
\theta_{n+1} = \theta_n    -  \alpha_{n+1} \frac{1}{2\epsy}   \qsaprobe_{n+1} \bigl[\Obj(\theta_{n}  + \upepsilon_n \qsaprobe_{n+1} ) -\Obj(\theta_{n}  -  \upepsilon_n \qsaprobe_{n+1} ) \bigr]  \,
\] 
with $\upepsilon_n = \upepsilon( \theta_{n})$.  The division by $2\epsy$ (independent of state) remains, since 2SPSA in its original form satisfies the required Lipschitz conditions for SA provided $\nabla \Obj$ is Lipschitz continuous.

There are surely many ways to obtain an online version based on QSA.   One approach is through sampling:  denote $T_n = nT$ for  
a  given sampling interval $T>0$,  and
take $\clYn_t$  constant on each interval $[T_n,T_{n+1})  $, 
designed to mimic 2SPSA.   One option is the simple average,
\[
\begin{aligned}
	\clYn_{T_{n+1}}  \eqdef \frac{2}{T}  \frac{1}{2\epsy} 
	&\int_{T_n}^{T_n+T/2}   \qsaprobe_{t}  \bigl[ \Obj( \theta_n   +  \upepsilon_t \qsaprobe_t ) - \Obj(\theta_n - \upepsilon_t \qsaprobe_t ) ]  \, dt
\end{aligned} 
\]
with $\theta_n=  \ODEstate_{T_n} $.
This can be computed in real-time, based on two sets of observations:
\[
\begin{aligned}
	\Obj( \theta_n   +  \upepsilon_t \qsaprobe_t   )  \qquad  &    T_n \le t\le  T_n + T/2
	\\
	\Obj( \theta_n   -  \upepsilon_n \qsaprobe_{t-T/2}  ) \qquad   &     T_n + T/2  \le t\le T_{n+1}
\end{aligned} 
\]
%\notes{}

\witem
The implications to reinforcement learning deserve much greater attention.  The applications of QSA and ESC in  \cite{mehmey09a,kildrs06,CSRL} are only a beginning.

\witem 
It may be straightforward to extend the P-mean flow representation  \eqref{e:BigGlobalODE} to tracking problems.   This may start with the general topic of   time inhomogeneous QSA, of the form 
\[
\ddt\ODEstate_t = \alpha f(\ODEstate_t,\qsaprobe_t;   t)\,.
\]
Analysis would require consideration of solutions to Poisson's equation, such as $\haf(\theta,\varble; t)$ for each $\theta\in\Re^d$ and $t\in \Re$.  
The representation will be more complex than 	\eqref{e:BigGlobalODE}, but will likely lead to sharper bounds than are presently available.

\clearpage
 
%%%%%%%%%%% %%%%%%%%%%% %%%%%%%%%%% %%%%%%%%%%% 
\section{Literature and Resources}
\label{s:lit}

Background on stability via Lyapunov criteria is standard  \cite{bor20a,kusyin97}. See  \cite{vid23} for more recent theory on Lyapunov function techniques for SA.
The 
 ODE@$\infty$ approach was introduced in 
\cite{bormey00a}, and refined in a series of papers since  \cite{bha11,rambha17,chedevborkonmey21}.  

\wham{The averaging principle}
The QSA ODE with fixed gain   \eqref{e:QSAgen_alpha} is not at all new to the dynamical systems community,  for which solidarity of the QSA ODE and the mean flow is known as the \textit{averaging principle}.     
Analysis of the larger state process  $\bfPsi =  (\bfODEstate,\bfPhi)$  may also be cast in the setting of singular perturbation theory,  in which $\bfODEstate$ is regarded as the slow variable.

The concepts are far older than stochastic approximation, with heuristics applied in the 18th century to obtain models for coupled planetary systems.  Firm theory emerged approximately one century ago  \cite{smi85},  which is several decades before Robbins and Monro introduced SA  \cite{robmon51a}.     Averaging and singular perturbation theory grew within the control systems community beginning in the 1970s  \cite{kokmalsan76},  which became a foundation of adaptive control (a close cousin of reinforcement learning) in the decades that followed.  
Any of the standard references will provide a fuller history, such as   \cite{kokorekha99,kha02,sanver07}.
 
 The discussion that follows concerns QSA with vanishing gain,  which is the typical setting of SA theory.
 
\wham{Quasi-stochastic approximation:}

The first work on QSA as an alternative to SA appears in \cite{lappagsab90,larpag12} with applications to finance,  and \cite{bhafumarwan03} contains initial results on convergence rates that are better than that obtained for their stochastic counterparts, but nothing like the bounds obtained here.   The setting for this prior work is in discrete time. Something similar to QSA appears in   \cite{bhafumarwan03} with applications to gradient-free optimization.  

The QSA ODE appeared in applications to reinforcement learning in \cite{mehmey09a},  which motivated the convergence rate bounds in  \cite{shimey11} for multi-dimensional linear models, of which 	\eqref{e:TwoLinExamples} are special cases (except that analysis is restricted to vanishing gain algorithms).  

The work of \cite{shimey11} was extended to nonlinear QSA in \cite{chedevbermey20b,chedevbermey21},  and initial results on PR averaging and the challenges with multiplicative noise is one topic of \cite[\S~4.9]{CSRL}.   Until recently it was believed that the best convergence rate possible is $O(1/T)$ for the vanishing gain algorithms considered.  
Recently, this was shown to be fallacy:  convergence rates arbitrarily close to $O(T^{-2})$ are obtained in  \cite{laumey22,laumey22e} via PR-averaging, but again this requires that a version of $\Upupsilon$ be zero.

\wham{Extremum seeking control:} ESC is perhaps among the oldest gradient-free optimization methods. It was born in the 1920s when a extremum seeking control solution was developed to maintain maximum transfer of power between a transmission line and a tram car \cite{leb22,EShistory2010,liukrs12}. Successful stories of the application of ESC to various problems have been shared over the 20th century -- e.g. \cite{rastrigin1960extremum,ras73,drali51,mee67,oba67}, but theory was always behind practice: the first Lyapunov stability analysis for ESC algorithms appeared in the 70s for a very special case \cite{luxlee71}.

General local stability and bounds on estimation error for ESC with scalar-valued probing signals were established 30 years later in \cite{krswan00} and later extended for multi-dimensional probing in \cite{arikrs02}, but the authors do not specify a domain of attraction for stability. Stronger stability results was than established in \cite{tannesmar06}, where the authors show that for an arbitrarily large set of initial conditions, it is possible to find control parameters so that the extermum seeking method results in a solution arbitrarily close the extrema to be found. This is known as semi-global pratical stability in the ESC literature. These analytical contributions sparked further research \cite{docpergua11,guadoc15,durstaebejoh13,pokeis22}.

It is pointed out in \cite{mil16} that there is a potential  ``curse of dimensionality'' when using sinusoidal probing signals,  which echos the well known curse in Quasi Monte Carlo \cite{asmgly07}.

\wham{Convergence rates for stochastic approximation:}

Consider first the case of vanishing step-size:
with step-size $\alpha_n = (1+n)^{-\rho}$,  $\half <\rho <1$, we obtain the following limits for the scaled  asymptotic mean-square error (AMSE):
	\begin{align}
		\lim_{n\to\infty} \frac{1}{\alpha_n} \Expect[ \| \theta_n -\theta^\ocp\|^2]  & = \trace (\Sigma_\theta(\rho)) 
		\label{e:PRrate1}
		\\
		\lim_{n\to\infty}  n \Expect[ \| \thetaPR_n -\theta^\ocp\|^2]  & = \trace (\Sigma_\theta^*) 
		\label{e:PRrate2}
\end{align} 
where $\{\thetaPR_n\}$ are obtained from $\{\theta_n\}$ via Polyak-Ruppert averaging \eqref{e:PR}.   The covariance matrix in the second limit has the explicit form $\Sigma_\theta =  G \Sigma_\Delta G^\transpose$ with $G =  {A^*}^{-1}$ and $\Sigma_\theta$ the covariance of $f(\theta^\ocp,\qsaprobe_{n+1}) $   (see \eqref{e:SA_recur}).  The covariance matrix $\Sigma_\theta^*$ is  minimal in a strong sense  \cite{pasgho13,chedevborkonmey21,bor20a,kusyin97}.

There is an equally long history of analysis for algorithms with constant gains.  For simplicity assume an SA algorithm with $\bfqsaprobe$ i.i.d., so that  the stochastic process $\bftheta$ is a Markov chain.   Stability of the ODE@$\infty$ implies a strong form of geometric ergodicity under mild assumptions on the algorithm
\cite{bormey00a}, which leads to bounds on the variance 
\begin{equation}
	\Expect[ \| \theta_n -\theta^\ocp\|^2]    =
	\Expect[ \| \theta_\infty -\theta^\ocp\|^2]    + O(\varrho^n)
	\label{e:BM}
\end{equation}
where the steady-state mean $ \Expect[ \| \theta_\infty -\theta^\ocp\|^2] $ is of order $O(\alpha)$ with step-size $\alpha>0$,  and $\varrho<1$ since $\bftheta$ is geometrically ergodic.   Averaging can reduce variance significantly \cite{moujunwaibarjor20,durmounausamscawai21}.

%\notes{}

\wham{Zeroth order optimization:}

We let  $\theta^\ocp$ denote the limit of the algorithm (assuming it exists), and  $\theta^\opt$ the global minimizer of $\Obj$. 

We begin with background on methods based on stochastic approximation,  which was termed gradient free optimization (GFO) in the introduction.
\begin{subequations}
	
\wham{1. GFO }    
Much of the theory is based on vanishing gain algorithms, and the probing gain is also taken to be vanishing in order to establish that the algorithm is \textit{asymptotically unbiased}.

\end{subequations}

Polyak was also a major contributor to the theory of convergence rates for GFO algorithms that are asymptotically unbiased,  with \cite{poltsy90} a highly cited starting point.  If the objective function is $p$-fold differentiable at $\theta^\ocp$ then the  best possible rate for the AMSE is $O(n^{-\beta} )$ with $\beta = \frac{p-1}{ 2p} $. 
This work was motivated in part by the upper bounds and algorithms of Fabian \cite{fab68b}.
See \cite{dipren97,dip03,pasgho13} for more recent history, and algorithms based on averaging that achieve this bound;  typically using   averaging combined with vanishing step-sizes.

\smallskip

\smallskip

%\notes{}

\wham{2. ESC} Theory has focused on constant-gain algorithms because many authors are interesting in tracking rather than solving exactly a static optimization problem.

It is shown in \cite{krswan00}, that it is possible to implement ESC to find a control law that drives a system near to an equilibrium state $x^\opt$, minimizing the steady-state value of its output at $-y^\opt$. It is assumed that $x^\opt = \Obj(\theta^\opt)$ where $\Obj: \Re \to \Re^d$ is a smooth function. For a scalar-valued perturbation signal of the form $\qsaprobe_t = \alpha \sin (\omega t)$, the estimation error $\| y^* - y^\opt \|$ is of order $O(\alpha^2)$, assuming stability of the algorithm. These results are later extended to the case where $\theta,\qsaprobe \in \Re^d$ in \cite{arikrs02} satisfying: $\qsaprobe^j_t = \alpha \sin(\omega_j t)$, for $\{1 \leq j \leq d\}$. In this multivariate setting, the function $\Obj : \Re^d \to \Re^d$ was assumed to be quadratic and time-varying. It is shown that the estimation error for tracking is not only a function of $\alpha$ but also the frequencies of $\qsaprobe$, since $\| y^*- y^\opt \|$ was of order $O(1/\omega^2_1 +d^2\alpha^2)$ where $\omega_1$ is the smallest frequency among $\{\omega_j\}$.

 \clearpage
 
 \bibliographystyle{abbrv}
 %\bibliography{strings,markov,q,QSA,../extras}%,ESCcaio} 
\bibliography{strings,markov,q,QSA,extras,ESCcaio,SGDreferences}

  \clearpage
 
 \appendix

%%%%%%%%%%% %%%%%%%%%%% %%%%%%%%%%% %%%%%%%%%%% 
\section{Appendix}

A maximal value  $\alpha^0>0$  and   constants $\lilb\le \bigb$ are introduced in
\Cref{t:ODEstate_is_bdd},  and used in remaining results in  \Cref{s:QSAtheory}.  
Several of the results in the following construct these three scalars,  with values differing in each appearance. 

 \subsection{ODE QSA Solidarity}
 \label{s:Solidarity}
 
The first step in the proof 
of \Cref{t:Couple_theta_local}
is to  re-interpret the LLN \eqref{e:LLNf}:
\begin{lemma}
\label[lemma]{t:LLNfast}
If (A1) holds then the following uniform Law of Large Numbers holds:  there is a function $\epsy_f \colon\Re\to\Re_+$ such that for any $\theta\in\Re^d$ and $\Phi_0\in\prstate$,
\begin{equation}  
\left |
\frac{1}{T }\int_0^T  \bigl[f(\theta,\qsaprobe^\alpha_{t}) - \barf(\theta) \bigr]  \, dt
				\right|     \le    \bigl(1 +  \| \theta\| \bigr)  \epsy_f(  T /\alpha   ) 
\label{e:LLNfast}
\end{equation}
with  $  \epsy_f( r  ) \to 0 $ as $r\to\infty$.  
\end{lemma}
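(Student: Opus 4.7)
The proof I would present is a straightforward reduction to the uniform Law of Large Numbers already stated in \Cref{t:uniLLN}. The observation is that the rescaled probing signal $\qsaprobe^\alpha_t = \qsaprobe_{t/\alpha}$ reduces, after a change of time variable, to the original signal $\qsaprobe$ evaluated on a time window of length $T/\alpha$.

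The plan is as follows. First, I would perform the substitution $s = t/\alpha$ (so $ds = dt/\alpha$) in the integral on the left-hand side of \eqref{e:LLNfast}, obtaining
\[
\frac{1}{T}\int_0^T [f(\theta,\qsaprobe^\alpha_t) - \barf(\theta)]\,dt
= \frac{\alpha}{T}\int_0^{T/\alpha} [f(\theta,\qsaprobe_s) - \barf(\theta)]\,ds
= \frac{1}{T'}\int_0^{T'} [f(\theta,\qsaprobe_s) - \barf(\theta)]\,ds,
\]
where $T' \eqdef T/\alpha$. The integrand on the right is evaluated at the original (un-rescaled) probing signal, with initial condition $\Phi_0$ unchanged.

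Next, I would invoke \Cref{t:uniLLN}, which under (A0a) and (A1) gives the existence of a function $\epsy_f\colon\Re_+\to\Re_+$ with $\epsy_f(r) \to 0$ as $r\to\infty$, defined by
\[
\epsy_f(T') \eqdef \sup_{\theta,z_0}\frac{1}{1+\|\theta\|}\left|\frac{1}{T'}\int_0^{T'}[f(\theta,\qsaprobe_s) - \barf(\theta)]\,ds\right|.
\]
Combining this with the identity above yields the claimed bound \eqref{e:LLNfast}, with the same $\epsy_f$ evaluated at $r = T/\alpha$.

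There is no serious obstacle here; the only mild subtlety is to confirm that the uniformity in \Cref{t:uniLLN} is over all initial conditions $\Phi_0 = z_0$ of the probing signal, so that the bound we obtain after the time change does not depend on $\alpha$ except through the argument $T/\alpha$ of $\epsy_f$. Since the uniform LLN of \Cref{t:uniLLN} already delivers this uniformity, the conclusion follows immediately.
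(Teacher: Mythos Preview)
Your proposal is correct and matches the paper's approach: the paper introduces \Cref{t:LLNfast} precisely as a ``re-interpretation'' of the uniform LLN \eqref{e:LLNf} from \Cref{t:uniLLN}, and your change of variables $s=t/\alpha$ makes that reinterpretation explicit. The only minor point is that \Cref{t:uniLLN} also requires (A0a), not just (A1), but this is a standing assumption on the probing signal and is implicit throughout.
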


Armed with this lemma, we now compare solutions to \eqref{e:QSAgen_alpha_time-scaled} with solutions to the mean flow \eqref{e:barf_ODE_gen}.   If both are initialized at the same value $\odestate_0 =\ODEstate_0 =\theta_0$, we obtain for any $T>0$,
\[
\ODEstate^\alpha_T = \theta_0+  \int_0^T   f(\ODEstate_r^\alpha ,\qsaprobe^\alpha_r) \, dr   \,,
		\qquad
\odestate_T= \theta_0+  \int_0^T \barf( \odestate_r) \, dr
\]
 The challenge is that the bound \eqref{e:LLNfast} holds with $\theta$ fixed, so that it is not immediately applicable to comparison of the two ODEs.

To proceed we divide the positive real line into adjacent intervals $[\tau_n, \tau_{n+1})$,   and let $\{ \odestate^n_t :    t \ge   \tau_{n} \}$ denote the solution to \eqref{e:barf_ODE_gen}  with $\odestate^n_{\tau_n} = \ODEstate^\alpha_{\tau_n}$.   
We take   $\tau_n = \Delta  n$ for each $n$, with $\Delta \in (0,1]$ to be chosen as a function of $\alpha\le 1$.  The following corollary to   \Cref{t:LLNfast} will provide constraints on this choice. 

\begin{lemma}
\label[lemma]{t:shortSolidarity}
The following uniform bounds hold under (A1):  for a fixed constant $B_f$, and each $n\ge 0$,
\[
\|\odestate^{n+1}_{\tau_{n+1}}  -\odestate^n_{\tau_{n+1}}    \|   
=
\|  \ODEstate^\alpha_{\tau_{n+1}}   -  \odestate^n_{\tau_{n+1}}  \|  \le     
				\Delta B_f^n(\Delta,\alpha)  \eqdef
			B_f \Delta^2    +  \Delta [1 + \|  \odestate^n_{\tau_{n}}  \|  ]  \epsy_f(\Delta/\alpha)
\]
\end{lemma}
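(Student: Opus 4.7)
The identity $\odestate^{n+1}_{\tau_{n+1}}=\ODEstate^\alpha_{\tau_{n+1}}$ is immediate from the definition $\odestate^{n+1}_{\tau_{n+1}}=\ODEstate^\alpha_{\tau_{n+1}}$, so the content of the lemma is the bound on $\|\ODEstate^\alpha_{\tau_{n+1}}-\odestate^n_{\tau_{n+1}}\|$. Since both trajectories share the same initial condition $\ODEstate^\alpha_{\tau_n}=\odestate^n_{\tau_n}$, the plan is to write the difference at time $\tau_{n+1}$ as
\[
\ODEstate^\alpha_{\tau_{n+1}}-\odestate^n_{\tau_{n+1}}=\int_{\tau_n}^{\tau_{n+1}}\bigl[f(\ODEstate^\alpha_s,\qsaprobe^\alpha_s)-\barf(\odestate^n_s)\bigr]\,ds,
\]
and then split the integrand by inserting the "frozen" values $f(\odestate^n_{\tau_n},\qsaprobe^\alpha_s)$ and $\barf(\odestate^n_{\tau_n})$. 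This yields three pieces: a state-variation term $I_1=\int[f(\ODEstate^\alpha_s,\qsaprobe^\alpha_s)-f(\odestate^n_{\tau_n},\qsaprobe^\alpha_s)]ds$, an averaging term $I_2=\int[f(\odestate^n_{\tau_n},\qsaprobe^\alpha_s)-\barf(\odestate^n_{\tau_n})]ds$, and a mean-flow drift term $I_3=\int[\barf(\odestate^n_{\tau_n})-\barf(\odestate^n_s)]ds$.

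For $I_2$ I will apply the uniform LLN in \Cref{t:LLNfast} after a time shift $u=s-\tau_n$: writing $\qsaprobe^\alpha_{\tau_n+u}=\qsaprobe_{(\tau_n+u)/\alpha}$ corresponds to the time-scaled probing signal started from a different $\Phi_0\in\prstate$, and since the bound in \eqref{e:LLNfast} is uniform in the initial state, we obtain $\|I_2\|\le\Delta(1+\|\odestate^n_{\tau_n}\|)\epsy_f(\Delta/\alpha)$ directly. This is the term that produces the non-trivial $\epsy_f(\Delta/\alpha)$ factor in the statement.

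For $I_1$ and $I_3$, the plan is to use the Lipschitz bound \eqref{e:fLip} from (A1) together with the a priori growth estimate $\|f(\theta,\qsaprobe)\|,\|\barf(\theta)\|\le C_0+L_f\|\theta\|$ (which follows from (A1) and boundedness of $\qsaprobe^\alpha_\cdot$). A one-step Gronwall argument on the interval $[\tau_n,\tau_{n+1}]$ of length $\Delta\le 1$ then gives
\[
\sup_{s\in[\tau_n,\tau_{n+1}]}\|\ODEstate^\alpha_s-\odestate^n_{\tau_n}\|+\sup_{s\in[\tau_n,\tau_{n+1}]}\|\odestate^n_s-\odestate^n_{\tau_n}\|\le C_1\Delta\,[1+\|\odestate^n_{\tau_n}\|],
\]
with $C_1$ absorbing $e^{L_f}$. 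Applying Lipschitz continuity of $f$ and $\barf$ once more inside the integrals produces $\|I_1\|+\|I_3\|\le 2L_fC_1\Delta^2[1+\|\odestate^n_{\tau_n}\|]$. The mildly awkward $\|\odestate^n_{\tau_n}\|$ factor multiplying $\Delta^2$ can be folded into the stated form by the simple rewrite $\Delta^2[1+\|\odestate^n_{\tau_n}\|]\le B_f\Delta^2+\Delta\cdot\Delta(1+\|\odestate^n_{\tau_n}\|)$, where the latter piece is absorbed into $\Delta[1+\|\odestate^n_{\tau_n}\|]\epsy_f(\Delta/\alpha)$ for all $\Delta$ small enough (equivalently, for $\Delta/\alpha$ large enough so that $\Delta\le\epsy_f(\Delta/\alpha)$ fails only in a compact regime handled by enlarging $B_f$).

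The main obstacle is the bookkeeping in this last absorption step: we need to confirm that the Lipschitz-growth contribution scaling as $\Delta^2\|\odestate^n_{\tau_n}\|$ can be consistently packaged under the two advertised pieces $B_f\Delta^2$ and $\Delta[1+\|\odestate^n_{\tau_n}\|]\epsy_f(\Delta/\alpha)$ without requiring $B_f$ to depend on $n$. Subject to the implicit restriction $\Delta\le 1$ and $\alpha\in(0,\alpha^0]$ for some fixed $\alpha^0$, this packaging is routine, since on the regime where $\Delta$ is not small relative to $\epsy_f(\Delta/\alpha)$ the whole bound is trivially $O(\Delta)$ and can be absorbed into a single enlarged $B_f$; everywhere else the residual $\Delta^2(1+\|\odestate^n_{\tau_n}\|)$ is dominated by $\Delta(1+\|\odestate^n_{\tau_n}\|)\epsy_f(\Delta/\alpha)$.
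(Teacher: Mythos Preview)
Your approach is essentially identical to the paper's: the same three-term decomposition via freezing at $\odestate^n_{\tau_n}$, the uniform LLN from \Cref{t:LLNfast} for $I_2$, and Lipschitz bounds for $I_1$ and $I_3$. You are in fact more careful than the paper, which simply asserts $I_1,I_3=O(\Delta^2)$ while you correctly note that the Lipschitz/Gronwall argument actually yields $O\bigl(\Delta^2(1+\|\odestate^n_{\tau_n}\|)\bigr)$.

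Where your write-up goes wrong is the final ``packaging'' paragraph. The claim that in the regime $\Delta>\epsy_f(\Delta/\alpha)$ ``the whole bound is trivially $O(\Delta)$ and can be absorbed into a single enlarged $B_f$'' is false: the bound still carries the factor $(1+\|\odestate^n_{\tau_n}\|)$, which is not uniformly bounded in $n$, and so cannot be absorbed into a constant $B_f$ independent of $n$. Concretely, you are asking for $C\Delta^2(1+\|\odestate^n_{\tau_n}\|)\le B_f\Delta^2+\Delta(1+\|\odestate^n_{\tau_n}\|)\epsy_f(\Delta/\alpha)$; dividing by $(1+\|\odestate^n_{\tau_n}\|)$ and letting this quantity tend to infinity forces $C\Delta\le\epsy_f(\Delta/\alpha)$, which is exactly what you assumed fails in that regime.

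The honest resolution is that the lemma statement (and the paper's own proof) is slightly imprecise: the clean bound is
\[
\|\ODEstate^\alpha_{\tau_{n+1}}-\odestate^n_{\tau_{n+1}}\|\le \Delta\bigl[B_f\Delta+\epsy_f(\Delta/\alpha)\bigr]\,(1+\|\odestate^n_{\tau_n}\|).
\]
This extra factor on the $\Delta^2$ piece is harmless downstream: in the recursion for $\clE_n$ in \Cref{t:shortSolidarity2}, the term $B_f\Delta$ plays exactly the same role as $\epsy_f(\Delta/\alpha)$ already does, and both vanish in the end since $\Delta=\alpha^\rho\to 0$ in the proof of \Cref{t:Couple_theta_local}. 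So your derivation is correct up to this cosmetic repackaging; just do not attempt the absorption and instead carry the $(1+\|\odestate^n_{\tau_n}\|)$ factor through.
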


The proof of the lemma relies on \Cref{t:LLNfast} and an application of the Bellman-Gr{\"o}nwall inequality:
\begin{lemma}
\label[lemma]{t:BG}
Let $\bfodestate^i$, $i=1,2$, denote two solutions to \eqref{e:barf_ODE_gen}, with different initial conditions.   Then, subject to (A1),
\[
\|  \odestate^1_t  -  \odestate^2_t \|  \le \|  \odestate^1_0  -  \odestate^2_0 \|  e^{L_f t}\,,\qquad t\ge 0.
\]
\end{lemma}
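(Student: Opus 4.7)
The plan is to prove this via a standard Gr\"onwall argument using the Lipschitz bound on $\barf$ from assumption (A1). First I would write the two solutions in integral form using the fundamental theorem of calculus:
\[
\odestate^i_t = \odestate^i_0 + \int_0^t \barf(\odestate^i_s)\,ds, \qquad i=1,2.
\]
Subtracting and taking norms, the triangle inequality under the integral gives
\[
\|\odestate^1_t - \odestate^2_t\| \le \|\odestate^1_0 - \odestate^2_0\| + \int_0^t \|\barf(\odestate^1_s) - \barf(\odestate^2_s)\|\,ds.
\]
Applying the Lipschitz bound on $\barf$ from (A1), namely $\|\barf(\theta') - \barf(\theta)\| \le L_f \|\theta'-\theta\|$, yields the scalar integral inequality
\[
u(t) \le u(0) + L_f \int_0^t u(s)\,ds, \qquad u(t) \eqdef \|\odestate^1_t - \odestate^2_t\|.
\]

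At this point I would invoke the classical Bellman--Gr\"onwall lemma, which states that any non-negative measurable function $u$ satisfying $u(t) \le c + L\int_0^t u(s)\,ds$ with $c,L\ge 0$ must satisfy $u(t) \le c\, e^{L t}$. Applying it with $c = \|\odestate^1_0-\odestate^2_0\|$ and $L = L_f$ immediately gives the desired bound
\[
\|\odestate^1_t - \odestate^2_t\| \le \|\odestate^1_0 - \odestate^2_0\| e^{L_f t}, \qquad t\ge 0.
\]

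There is no real obstacle here: continuity of the solutions ensures $u$ is measurable (in fact continuous), so the hypotheses of Gr\"onwall are trivially satisfied. The only care required is to confirm that global solutions exist on $[0,\infty)$, which follows from the global Lipschitz bound on $\barf$ in (A1) via Picard--Lindel\"of and the no-blow-up criterion. Since the result is a textbook application, I would likely state it in one short paragraph in the paper itself, simply citing Gr\"onwall's inequality after displaying the integral bound.
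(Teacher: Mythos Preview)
Your proposal is correct and is exactly the standard Bellman--Gr\"onwall argument the paper has in mind; indeed the paper does not spell out a proof of this lemma at all, treating it as a named consequence of the Bellman--Gr\"onwall inequality. Your write-up fills in precisely the details one would expect.
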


\begin{proof}[Proof of \Cref{t:shortSolidarity}]
The equality $
\|\odestate^{n+1}_{\tau_{n+1}}  -\odestate^n_{\tau_{n+1}}    \|    = \|  \ODEstate^\alpha_{\tau_{n+1}}   -  \odestate^n_{\tau_{n+1}}  \|  $ follows from the definitions.   
To bound the latter, we write
\[
\begin{aligned}
\ODEstate^\alpha_{\tau_{n+1}}   -  \odestate^n_{\tau_{n+1}}  
	&=  \int_{\tau_n}^{\tau_{n+1}}   \bigl[  f(\ODEstate^\alpha_r, \qsaprobe^\alpha_r)  -   \barf(\odestate^n_r)\bigr]\, dr
			&&    \textbf{Bounds:}
   \\
	&=  \int_{\tau_n}^{\tau_{n+1}}  
		 \bigl[  f(\ODEstate^\alpha_{\tau_n}, \qsaprobe^\alpha_r)  -   \barf(\odestate^n_{\tau_n})\bigr]\, dr
		 &&     O\bigl( \bigl( 1+ \|\odestate^n_{\tau_n}\|  \bigr)\epsy_f(  \Delta /\alpha   )   \bigr)
	\\
	&\quad
		 +   \int_{\tau_n}^{\tau_{n+1}}   \bigl[ f(\ODEstate^\alpha_r, \qsaprobe^\alpha_r)   -   f(\ODEstate^\alpha_{\tau_n}  , \qsaprobe^\alpha_r) \bigr]\, dr	
		 \qquad
		 	 &&     O\bigl(  \Delta^2\bigr)
		 \\
	&\quad
		 +   \int_{\tau_n}^{\tau_{n+1}}   \bigl[      \barf(\odestate^n_{\tau_n})   -   \barf(\odestate^n_r)\bigr]\, dr
		 		  &&     O\bigl(  \Delta^2\bigr)
\end{aligned} 
\]
That is, the error is the sum of three terms, in which the first is bounded via \Cref{t:LLNfast}, and the second two are bounded by $O(\Delta^2)$ for $\Delta\le 1$ under the Lipschitz assumptions on $f$ and $\barf$. 
\end{proof}

The next step is to obtain uniform bounds on the error $\clE_n =   \|\odestate^{n}_{\tau_n}  - \odestate_{\tau_n}    \|   $ for each $n$,   with $\odestate_{\tau_n} $ the solution at time $\tau_n$ with initial condition $\odestate_0 = \ODEstate_0 =\theta_0$, so in particular $\clE_0=0$.   
\begin{lemma}
\label[lemma]{t:shortSolidarity2}
The following uniform bounds hold under (A1),  provided  $\epsy_f(  \Delta /\alpha   )\le 1$:      for a fixed constant $B_f^1$, and each $n\ge 0$,
\[
    \|\odestate^{n}_{\tau_n}  - \odestate_{\tau_n}    \|    \le B_f^1\exp(2 L_f \tau_n)   \bigl[  1     +      \|\theta_0\|  \bigr]  \cdot  \bigl[ \Delta +  \epsy_f(  \Delta /\alpha   )   \bigr]
\]
\end{lemma}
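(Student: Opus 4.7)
The plan is to reduce the global error $\clE_n=\|\odestate^n_{\tau_n}-\odestate_{\tau_n}\|$ to a telescoping sum of one–step errors, control each summand through Lemmas~\ref{t:BG} and~\ref{t:shortSolidarity}, and then absorb the resulting geometric sums into the advertised bound. First I would write $\odestate\equiv\odestate^0$ and decompose
\[
\odestate^n_{\tau_n}-\odestate_{\tau_n}
=\sum_{k=0}^{n-1}\bigl(\odestate^{k+1}_{\tau_n}-\odestate^k_{\tau_n}\bigr).
\]
Since $\odestate^{k+1}$ and $\odestate^k$ are two solutions of the time-homogeneous mean flow that coincide at all times outside $[\tau_{k+1},\infty)$ only through their common state history up to $\tau_{k+1}$, Lemma~\ref{t:BG} applied on $[\tau_{k+1},\tau_n]$ yields
\[
\|\odestate^{k+1}_{\tau_n}-\odestate^k_{\tau_n}\|
\le e^{L_f(\tau_n-\tau_{k+1})}\,\|\odestate^{k+1}_{\tau_{k+1}}-\odestate^k_{\tau_{k+1}}\|.
\]
Lemma~\ref{t:shortSolidarity} bounds the short–time error on the right by $B_f\Delta^2+\Delta(1+\|\odestate^k_{\tau_k}\|)\epsy_f(\Delta/\alpha)$.

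Next I would remove the dependence of this bound on $\|\odestate^k_{\tau_k}\|=\|\ODEstate^\alpha_{\tau_k}\|$. Assumption~(A1) gives $\|f(\theta,\qsaprobe)\|\le L_f\|\theta\|+\|f(0,\qsaprobe)\|$, while (A0a) together with compactness of $\prstate$ makes $\sup_{t,\alpha}\|f(0,\qsaprobe^\alpha_t)\|$ finite. A direct Gr\"onwall estimate applied to the time–scaled QSA ODE \eqref{e:QSAgen_alpha_time-scaled} then produces a constant $C$, independent of $\alpha$ and of $\Delta$, such that
\[
\|\ODEstate^\alpha_{t}\|\le C\bigl(1+\|\theta_0\|\bigr)e^{L_f t},\qquad t\ge 0.
\]

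Combining these ingredients,
\[
\clE_n
\le \sum_{k=0}^{n-1}e^{L_f(\tau_n-\tau_{k+1})}
\Bigl[B_f\Delta^2+\Delta\bigl(1+C(1+\|\theta_0\|)e^{L_f\tau_k}\bigr)\epsy_f(\Delta/\alpha)\Bigr].
\]
The first sum is a Riemann approximation of $B_f\Delta\int_0^{\tau_n}e^{L_f s}\,ds$, hence bounded by a constant multiple of $\Delta\,e^{L_f\tau_n}$. For the second, the key simplification is $e^{L_f(\tau_n-\tau_{k+1})}e^{L_f\tau_k}=e^{L_f\tau_n}e^{-L_f\Delta}\le e^{L_f\tau_n}$, so the exponential telescopes out of the sum and leaves a factor of $n\Delta=\tau_n$. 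Using the elementary inequality $\tau_n\le e^{L_f\tau_n}$ upgrades this prefactor to $e^{2L_f\tau_n}$. After replacing $B_f\Delta^2$ by $B_f\Delta$ (valid since $\Delta\le 1$) and factoring $(1+\|\theta_0\|)[\Delta+\epsy_f(\Delta/\alpha)]$, the two contributions combine into
\[
\clE_n \le B_f^1\,e^{2L_f\tau_n}\bigl(1+\|\theta_0\|\bigr)\bigl[\Delta+\epsy_f(\Delta/\alpha)\bigr],
\]
for a constant $B_f^1$ depending only on $B_f,L_f,C$. The hypothesis $\epsy_f(\Delta/\alpha)\le 1$ is used exactly once, to absorb the additive ``$1$'' in $1+C(1+\|\theta_0\|)e^{L_f\tau_k}$ and to keep all coefficients polynomial in the basic constants.

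The main obstacle is the bookkeeping required to land on $e^{2L_f\tau_n}$ rather than a larger exponential: one factor $e^{L_f(\tau_n-\tau_{k+1})}$ comes from Gr\"onwall propagation of earlier errors, a second factor $e^{L_f\tau_k}$ enters through the \emph{a priori} bound on $\|\ODEstate^\alpha_{\tau_k}\|$ that sits inside Lemma~\ref{t:shortSolidarity}, and the cancellation in the exponents (together with $\tau_n\le e^{L_f\tau_n}$) is what keeps the two factors from compounding. Everything else is routine Lipschitz estimation and summation.
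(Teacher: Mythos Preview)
Your proof is correct, and it differs from the paper's in one structural choice that is worth noting. The paper sets up a recursive inequality for $\clE_{n+1}$ in terms of $\clE_n$: it bounds $\|\odestate^n_{\tau_n}\|$ via the triangle inequality $\|\odestate^n_{\tau_n}\|\le\|\odestate_{\tau_n}\|+\clE_n$, with $\|\odestate_{\tau_n}\|\le e^{L_f\tau_n}\|\theta_0\|$ coming from Gr\"onwall on the \emph{mean} flow. This feeds $\clE_n$ back into the coefficient of the recursion, giving $\clE_{n+1}\le\beta_\Delta\clE_n+\Delta[\clU^0+\clU^1 e^{L_f\tau_n}]$ with $\beta_\Delta=e^{L_f\Delta}+\epsy_f(\Delta/\alpha)$, which is then iterated (or compared to a linear ODE). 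You instead bound $\|\odestate^k_{\tau_k}\|=\|\ODEstate^\alpha_{\tau_k}\|$ directly by Gr\"onwall on the time-scaled QSA ODE \eqref{e:QSAgen_alpha_time-scaled}, which yields an \emph{a priori} bound independent of the $\clE_k$'s. This decouples the growth estimate from the error accumulation and turns the telescoping sum into an explicit computation with no self-reference. Your route is arguably cleaner for exactly this reason: the paper's feedback term is why the hypothesis $\epsy_f(\Delta/\alpha)\le 1$ is genuinely needed there (to keep $\beta_\Delta$ under control when iterated), whereas in your argument that hypothesis is essentially cosmetic. One small correction: the inequality you invoke should read $\tau_n\le (1/L_f)e^{L_f\tau_n}$ rather than $\tau_n\le e^{L_f\tau_n}$, but this only changes the constant $B_f^1$.
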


\begin{proof}
The proof begins with the triangle inequality to obtain a recursive bound:
\[
\begin{aligned}
\clE_{n+1} \eqdef   \|\odestate^{n+1}_{\tau_{n+1}}  - \odestate_{\tau_{n+1}}    \|  
  &   \le      \|\odestate^{n}_{\tau_{n+1}}  - \odestate_{\tau_{n+1}}    \|  
+ \|\odestate^{n+1}_{\tau_{n+1}}  - \odestate^n_{\tau_{n+1}}    \|
\\
  &   \le      \|\odestate^{n}_{\tau_{n+1}}  - \odestate_{\tau_{n+1}}    \|    
  +
\Delta B_f^n(\Delta,\alpha) 
\end{aligned} 
\]
The recursion is obtained on combining this bound with \Cref{t:BG}:
\[
\clE_{n+1}  \le     e^{L_f \Delta} \|\odestate^{n}_{\tau_n}  - \odestate_{\tau_n}    \|    +\Delta B_f^n(\Delta,\alpha) 
 =     e^{L_f \Delta} \clE_n    +\Delta B_f^n(\Delta,\alpha) 
\]
where $B_f^n$ appears in \Cref{t:shortSolidarity}.   
Applying the triangle inequality,
\[
B_f^n(\Delta,\alpha)  = 
			B_f \Delta   +    [1 + \|  \odestate^n_{\tau_{n}}  \|  ]  \epsy_f(\Delta/\alpha)
			\le
			\bigl(B_f \Delta   +  \epsy_f(\Delta/\alpha)  \bigr)   
			+   \bigl(   \|  \odestate_{\tau_{n}}  \|   +  \clE_n\bigr) \epsy_f(\Delta/\alpha)
\]
An application of \Cref{t:BG} once more gives $ \|  \odestate_{\tau_{n}}  \| \le  e^{L_f \tau_n}   \| \theta_0\|$.

Combining these bounds gives
\[
\begin{aligned}
\clE_{n+1}  &\le     \beta_\Delta \clE_n    +  \Delta   \bigl[   \clU^0   +    \clU^1  e^{L_f \tau_n}  \bigr]
   \\
 \  \textit{with} \ \              &\beta_\Delta\eqdef   e^{L_f \Delta} +  \epsy_f(\Delta/\alpha)
            \\
            &\clU^0  \eqdef \bigl(B_f \Delta   +  \epsy_f(\Delta/\alpha)  \bigr)    
            \,, \quad \clU^1 \eqdef   \epsy_f(\Delta/\alpha)    \| \theta_0\|
\end{aligned} 
\]
The final bound is obtained by iteration, and recalling $\clE_0=0$.     

It is simplest to obtain a comparison with the ordinary differential equation,
\[
\ddt X_t  =  \gamma_\Delta X_t   +   \clU^0   +  \clU^1  \exp(L_f t)  \,,\quad X_0 = \clE_0 = 0
\] 
with $\gamma_\Delta = (\beta_\Delta  -1)/\Delta$.
\end{proof}

 \begin{proof}[Proof of \Cref{t:Couple_theta_local}]
 Combining \Cref{t:shortSolidarity} and
 \Cref{t:shortSolidarity2}
 we obtain the bound \Cref{t:Couple_theta_local} for the sampling times $\tau_n$, provided we choose $\Delta = \Delta(\alpha)$ so that 
 \[
 \epsy_0(\alpha) \eqdef  \Delta(\alpha) +  \epsy_f(  \Delta(\alpha) /\alpha   )      \to 0\,,\quad \textit{as $\alpha\downarrow 0$}
 \]
For this we choose $\Delta =\alpha^{\rho}$ with $\rho\in (0,1)$.

 Lipschitz continuity of $f$ and $\barf$ implies that the bound \eqref{e:Couple_theta_local}
 is preserved for all time,  with an increase in $B_0$ by a value of only $O(\Delta)$.  
\end{proof}

 \clearpage

\subsection{Baker's Theorem and Poisson's Equation}
\label{s:BakerPoisson}

Here we explain how to obtain well behaved solutions to Poisson's equation for a function $h\colon\Re^d\to\Re$ that is
is analytic.    We denote the mixed partials by
\[
	h^{(\beta)} (z) 
	% = \partial^\beta f\, (z) 
	=  \frac{\partial^{\beta_1}}{ \partial z_1^{\beta_1}} \cdots  \frac{\partial^{\beta_d}}{ \partial z_d^{\beta_d}}  h\,  (z)\,,\quad  \beta \in \nat^d\, .
\] 
For $\beta \in \nat^d$ and a vector $v\in\Re^d$ we denote $v^\beta = v_1^{\beta_1}\cdots  v_d^{\beta_d}$,   and $|\beta|$ the $\ell_1$ norm:  $|\beta| =\sum |\beta_i|$. 

 A  candidate representation for $\hah$ can be found on obtaining the solution to Poisson's equation for each of the primitive functions appearing in a Taylor series expansion.    Justification is then a challenge without assumptions on the frequencies.   

For any $ \beta\in \intgr^K$ denote $\omega_\beta = \sum \beta_i \omega_i$ and $\phi_\beta = \sum \beta_i \phi_i$.  
We will see that the terms in the Taylor series expansion for $\hah$ resemble those for $h$, but with terms divided by $\omega_\beta$ (subject to the constraint that this is non-zero).    Consequently, a lower bound on  $\omega_\beta$ as a function of $|\beta|$ is required---such bounds are available under Assumption~(A0).

The following results are Theorem B.5 and Corollary B.6 of the Supplemental Material of \cite{laumey22e}. Their proofs
are based on a version of Baker's Theorem  \cite[Thm. 1.8]{bug18}:   there is a constant $C>0$ such that for any $\beta$, provided $\omega_{\beta} \neq 0$,
\[
|\omega_{\beta} |   \ge k_\beta^{-C} \,,  \quad k_\beta = \max\{3, \beta_1,\dots,\beta_d\}
\]

\begin{theorem}
\label[theorem]{t:hah}
Suppose that  $h \colon \Re^K \to \Re$ is analytic.
  Then there are coefficients $\{ c_\beta  :  \beta\in \intgr^K \}$ such that the following representations hold for $h$ and its mean $\barh$:
\begin{subequations}
\begin{align}
 h(\qsaprobe_t) & =  \sum_{ \beta\in \intgr^K} c_\beta  \cos(2\pi[\omega_\beta t + \phi_\beta])   \,,
\qquad
\barh  =  \sum_{ \beta:  \omega_\beta=0} c_\beta  \cos(2\pi  \phi_\beta)
 \label{e:tilh}
\end{align}
If in addition  the frequencies $\{ \omega_i \}$ satisfy \eqref{e:logFreq}, then
there is a solution to Poisson's equation $\hah$ that has zero mean, 
and is analytic on the domain  $\{\Co\setminus \{0\}\}^K$.    
When restricted to $\prstate$ it takes the form,
\begin{equation}
\hah(\Phi_t) =  -   \frac{1}{2 \pi  }   \sum_{ \beta:  \omega_\beta \neq  0}  \frac{1}{ \omega_\beta}  c_\beta   \sin(2\pi[\omega_\beta t + \phi_\beta]) 
\label{e:hahTheoremEquations}
\end{equation}
\end{subequations}
\end{theorem}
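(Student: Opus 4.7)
The plan is to obtain the Fourier-type expansion of $h(\qsaprobe_t)$ by combining the Taylor expansion of $h$ with product-to-sum formulas for the cosine entries of $\qsaprobe_t$, and then construct $\hah$ by formal term-by-term antidifferentiation along the linear clock flow \eqref{e:LinearMarkovQSA}.

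For the expansion \eqref{e:tilh}, I would write $\cos(2\pi[\omega_i t + \phi_i]) = \half(\Phi_t^i + \overline{\Phi_t^i})$, substitute into the Taylor series of $h$ at the origin (which converges in a polydisk containing a neighborhood of $[-1,1]^K$ because $h$ is analytic), and multiply out the binomials. This yields the formal representation
\[
h(\qsaprobe_t) = \sum_{\beta \in \intgr^K} d_\beta \exp(2\pi j[\omega_\beta t + \phi_\beta]),
\]
where each $d_\beta$ is an explicit combinatorial sum of Taylor coefficients of $h$ weighted by $2^{-|\beta|}$. Reality of the left-hand side forces $d_{-\beta} = \overline{d_\beta}$, and pairing conjugate exponentials produces \eqref{e:tilh} with real coefficients $c_\beta$. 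The formula for $\barh$ drops out by time-averaging, since $\exp(2\pi j \omega_\beta t)$ has zero mean unless $\omega_\beta = 0$.

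To construct $\hah$, I would take \eqref{e:hahTheoremEquations} as the defining formula and verify via term-by-term differentiation that
\[
\ddt \hah(\Phi_t) = -\sum_{\omega_\beta \ne 0} c_\beta \cos(2\pi[\omega_\beta t + \phi_\beta]) = -\bigl[h(\qsaprobe_t) - \barh\bigr].
\]
By the chain rule and \eqref{e:LinearMarkovQSA}, this is the ODE form $\nabla^\transpose \hah(z) W z = -\tilh(z)$ of Poisson's equation, and $\Expect[\hah(\Phi)] = 0$ holds automatically since every surviving mode has non-zero frequency.

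The main obstacle is proving absolute convergence of \eqref{e:hahTheoremEquations} and its analyticity on $\{\Co \setminus \{0\}\}^K$; this is precisely where hypothesis (A0b) is essential, in the form of a Diophantine-type lower bound on $|\omega_\beta|$. In the periodic case \eqref{e:freqPeriodic}, every non-zero $\omega_\beta$ satisfies $|\omega_\beta| \ge \omega_1$. In the transcendental case \eqref{e:logFreqSpan}, the cited form of Baker's theorem gives $|\omega_\beta| \ge k_\beta^{-C}$ with $k_\beta = \max\{3,|\beta_1|,\dots,|\beta_K|\}$. Either way, $1/|\omega_\beta|$ grows only polynomially in $|\beta|$. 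Since $h$ is analytic, its Taylor coefficients decay geometrically, and the binomial bookkeeping propagates this to geometric decay of $|c_\beta|$, so the polynomial blow-up from $1/|\omega_\beta|$ is absorbed. Viewed as a Laurent series in the variables $(\Phi^1,\dots,\Phi^K)$, \eqref{e:hahTheoremEquations} then converges absolutely and uniformly on compact subsets of $\{\Co \setminus \{0\}\}^K$, giving the claimed analyticity.
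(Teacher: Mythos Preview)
Your proposal is correct and follows essentially the same approach as the paper: expand $h(\qsaprobe_t)$ as a Laurent/Fourier series in $\Phi_t$, solve Poisson's equation term-by-term by dividing by $\omega_\beta$, and invoke Baker's theorem (or the trivial bound $|\omega_\beta|\ge\omega_1$ in the periodic case) to control the small divisors so that the resulting series remains analytic on $\{\Co\setminus\{0\}\}^K$. The paper's own proof is terse---it defers the almost-periodic case to \cite{laumey22} and simply records the Laurent extensions $\tilh(G(z)) = \sum_{\omega_\beta\neq 0} c_\beta \half[z^\beta + z^{-\beta}]$ and $\hah(z) = -\tfrac{j}{2\pi}\sum_{\omega_\beta\neq 0}\tfrac{c_\beta}{\omega_\beta}\half[z^\beta - z^{-\beta}]$---so your write-up is in fact a more detailed version of the same argument.
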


\begin{proof}
%T
 Based on  (\ref{e:tilh}, \ref{e:hahTheoremEquations}), the extensions of $\tilh = h-\barh$ and $\hah$ to $z\in \{\Co\setminus \{0\}\}^K$ are easily identified:
 \begin{align*}
 \tilh(G(z)) & =   \sum_{ \beta:  \omega_\beta \neq  0}   c_\beta   \half [z^\beta  +  z^{-\beta} ]   
 \\
 \hah(z) & =   -  j \frac{1}{2 \pi  }   \sum_{ \beta:  \omega_\beta \neq  0}   \frac{1}{ \omega_\beta} c_\beta   \half  [z^\beta  -  z^{-\beta} ]   
\end{align*}
 \end{proof}

   \begin{figure}
	 
	 \centering
		\includegraphics[width=.35\hsize]{./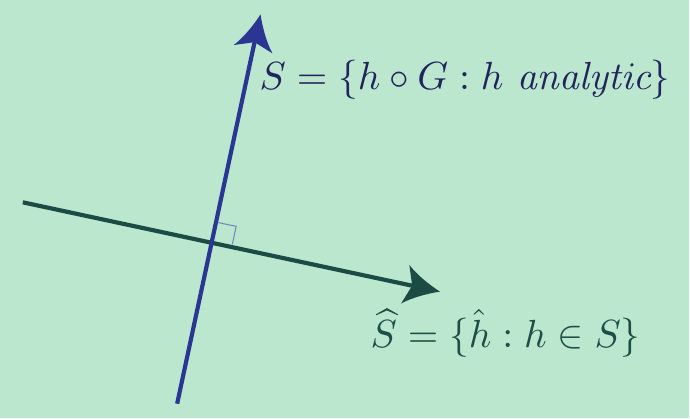}
		\caption{Orthogonality of two function classes: 1.~Analytic functions of the probing signal, and 2.~Corresponding solutions to Poisson's equation.}
	\label{fig:FishOrthogonality}
	\end{figure}  
 
%Th

\begin{corollary}
\label[corollary]{t:orthCor}
Suppose that $g$ and $h$ are two real-valued analytic functions on $\Re^K$.  
If  the frequencies $\{ \omega_i \}$ satisfy \eqref{e:logFreq}, 
then  $\Expect[ g(\qsaprobe)  \hah(\Phi)] = 0$,   where $\hah$ is given in \eqref{e:hahTheoremEquations}.
\end{corollary}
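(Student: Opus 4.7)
My approach rests on the explicit Fourier expansions furnished by Theorem~\ref{t:hah}, combined with an orthogonality argument on the torus $\prstate$. The plan unfolds in four steps.

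First, I would rewrite the expectation as an integral against $\uppi$ on $\prstate$, and observe that because $\uppi$ is the Haar (uniform) probability measure, the characters $z \mapsto z^\beta$ are orthonormal: $\int_\prstate z^\beta\,\uppi(dz) = \mathbf{1}\{\beta = 0\}$ for $\beta \in \intgr^K$. Equivalently, one may pass to the time-average representation
$\Expect[g(\qsaprobe)\hah(\Phi)] = \lim_{T\to\infty}\frac{1}{T}\int_0^T g(\qsaprobe_t)\hah(\Phi_t)\,dt$,
which annihilates every nonzero-frequency exponential.

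Second, I would apply Theorem~\ref{t:hah} to both $g$ and $h$ to obtain absolutely convergent Laurent representations on $\prstate$,
\[
g(G(z)) = \sum_\beta d_\beta\, z^\beta, \qquad \hah(z) = \sum_{\beta:\,\omega_\beta\neq 0}\gamma_\beta\, z^\beta,
\]
with real coefficients $d_\beta$ satisfying $d_{-\beta}=d_\beta$ (inherited from the pure-cosine form of the real-analytic $g$) and purely imaginary coefficients $\gamma_\beta = -c_\beta/(2\pi j\,\omega_\beta)$ satisfying $\gamma_{-\beta} = -\gamma_\beta$ (arising by integrating the cosine series of $\tilh$ to produce the pure-sine series of $\hah$ guaranteed by \eqref{e:hahTheoremEquations}).

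Third, multiplying the two series and integrating against $\uppi$ term by term (justified by absolute convergence and Fubini),
\[
\Expect\bigl[g(\qsaprobe)\hah(\Phi)\bigr] \;=\; \sum_\beta d_\beta\,\gamma_{-\beta}.
\]

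Fourth, I would invoke the realness symmetries twice to show the sum vanishes. Using $\gamma_{-\beta}=-\gamma_\beta$, the sum equals $-\sum_\beta d_\beta \gamma_\beta$; relabeling $\beta \leftrightarrow -\beta$ and using $d_{-\beta}=d_\beta$ instead gives $\sum_\beta d_\beta\gamma_{-\beta} = +\sum_\beta d_\beta \gamma_\beta$. Comparing the two identities forces $\sum_\beta d_\beta \gamma_\beta = 0$, and hence the expectation is zero.

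The main obstacle is the second step: establishing that the Laurent series for $\hah$ converges absolutely in the presence of the small-divisor factors $1/\omega_\beta$. This is precisely where the frequency hypothesis (A0b) is indispensable. In the transcendental almost periodic regime one invokes Baker's theorem (see \Cref{s:BakerPoisson}) to secure $|\omega_\beta|\gtrsim k_\beta^{-C}$ for a fixed exponent $C$; then the rapid decay of $|c_\beta|$ inherited from analyticity of $h$ dominates, so the combined coefficients $c_\beta/\omega_\beta$ remain summable. In the purely periodic case the nonzero frequencies are bounded below by $\omega_1$ and no small divisors arise at all. Once absolute convergence is secured, the algebraic manipulations in steps three and four are essentially automatic, and the corollary follows.
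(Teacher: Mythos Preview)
Your proposal is correct and matches the approach the paper intends: the paper does not give an explicit proof here but defers to \cite{laumey22}, relying on the sine/cosine series representations of \Cref{t:hah} and the orthogonality picture in \Cref{fig:FishOrthogonality}. Your argument makes this precise by converting those real trigonometric series into Laurent series on $\prstate$, identifying the even ($d_{-\beta}=d_\beta$) versus odd ($\gamma_{-\beta}=-\gamma_\beta$) parity of the coefficients, and using character orthogonality; the absolute-convergence issue you flag for $\hah$ is exactly what \Cref{t:hah} (via Baker's theorem in the almost-periodic case) already guarantees.
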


The above corollary is vital for ensuring that $\barUpupsilon\equiv \Zero$ when frequencies are chosen with care.  Its geometrical interpretation is illustrated in \Cref{fig:FishOrthogonality}.

%Th
%
%The 
This subsection is concluded with implications to the \textit{target bias}:
\begin{proposition}
\label[proposition]{t:orthCorBig} 
Consider the QSA ODE \eqref{e:QSAgen_alpha} subject to (A0a) and (A1), but with arbitrary choices of frequencies $\{\omega_i\}$.   
Suppose that an invariant probability measure $ \upvarpi$ exists, and let $\Uppsi = (\ODEstate, \Upphi)  $ denote a $\Uppi$-valued random vector with distribution $ \upvarpi$.   
Then,  for any analytic function $g \colon \Uppi\to \Re$ with zero-mean solution to Poisson's equation $\hag$,
\[
\alpha  \Expect \bigl[  [\Df \hag ] (\Uppsi )     \bigr]  =  \Expect \bigl[  g( \Uppsi )  -  \barg(\ODEstate)        \bigr] 
\]
In particular,  if $f$ is analytic and there is a solution $\haf$ to Poisson's equation,  then the   tracking bias may be expressed
\begin{equation}
\Expect \bigl[ \barf(\ODEstate)        \bigr]  =
\alpha \Expect \bigl[  \Upupsilon (\Uppsi )     \bigr]  
\label{e:newUpupsilon}
\end{equation}
\end{proposition}
 \begin{proof} 
 For (i) we  apply \Cref{t:QSAgenerator}.   
 To establish \eqref{e:newUpupsilon},   recall from below \eqref{e:Dfg} the 
formula   $\Upupsilon  = - \Df g  $,   using $g=\haf$.   Hence from (i),
 \[
 \alpha \Expect \bigl[  \Upupsilon (\Uppsi )     \bigr]  =  - \Expect \bigl[  f( \Uppsi )  -  \barf(\ODEstate)        \bigr] 
\]
The proof is completed on establishing $ \Expect \bigl[  f( \Uppsi )         \bigr] =\Zero$,  which follows from \Cref{t:QSAgenerator}. 
    \end{proof}
   
 \clearpage
 
 \subsection{Stability}
 \label{s:AppStability}
 Here, we establish exponential stability of the mean flow ODE, which along with the results in \Cref{s:Solidarity}, imply semi-exponential stability and $\alpha^0$-ultimate boundedness of the QSA ODE as stated by part (ii) of \Cref{t:ODEstate_is_bdd}. The full proof of this theorem is postponed to the end of \Cref{s:lyap}.

 \begin{proposition}
 	\label[lemma]{t:CriteriaSolidarity}  
 	Either of the criteria (V4) or asymptotic stability of the ODE@$\infty$ imply (V4').  
 \end{proposition}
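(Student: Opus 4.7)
The plan is to treat the two implications separately, in each case producing an explicit Lyapunov function. In both cases the Lipschitz and linear-growth conditions in \eqref{e:VLip} can be arranged directly from the construction (possibly after rescaling $V$ by a constant), so the substantive work is the integrated drift inequality \eqref{e:V4T}.

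For the first implication, (V4) $\Rightarrow$ (V4'), I would reuse the same $V$. Fix $T>0$ so that $e^{-\delta_0 T}\le \tfrac{1}{2}$ and split on whether the mean-flow trajectory $\{\odestate_s : s \in [\tau,\tau+T]\}$ stays in the decay region $\{\theta : \|\theta\|\ge \delta_0^{-1}\}$. If it does, Gr\"onwall applied to \eqref{e:ddt_bound_Lyapfun} gives $V(\odestate_{\tau+T})\le e^{-\delta_0 T}V(\odestate_\tau)\le \tfrac{1}{2}V(\odestate_\tau)$, so that $V(\odestate_{\tau+T})-V(\odestate_\tau)\le -\tfrac{1}{2}V(\odestate_\tau)\le -\tfrac{1}{2}\|\odestate_\tau\|$ via \eqref{e:VLip}. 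If the trajectory escapes at some first time $t_*\in[\tau,\tau+T]$, then Lipschitz continuity of $\barf$ and Gr\"onwall confine $\|\odestate_s\|$ to a ball of radius $\delta_0^{-1}e^{L_f T}$ on $[t_*,\tau+T]$, yielding a uniform bound $V(\odestate_{\tau+T})\le C(T)$ depending only on $L_V, L_f, \delta_0, T$. Taking $\delta_1^{-1}$ large enough that $\|\odestate_\tau\|\ge \delta_1^{-1}$ forces $V(\odestate_\tau)\ge 2C(T)$ gives the required decrease in this case too.

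For the second implication, I would adopt the candidate $V$ in \eqref{e:BMV}. Radial linearity of $\barf_\infty$ together with local asymptotic stability at the origin forces global exponential stability, hence $\|\odestate_t^\infty(\theta)\|\le Be^{-\delta t}\|\theta\|$ for some $B,\delta>0$. This yields the upper bound $V(\theta)\le (B/\delta)\|\theta\|$, while positive homogeneity of the ODE@$\infty$ trajectories combined with continuity on the unit sphere gives a matching lower bound $V(\theta)\ge c_-\|\theta\|$; Lipschitz continuity of $V$ is then inherited from Lipschitz dependence of $\odestate^\infty_t$ on initial conditions (standard ODE sensitivity applied to Lipschitz $\barf_\infty$). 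With these in hand the drift inequality reduces to showing a geometric-like decrease $\|\odestate_{\tau+T}\|\le\tfrac{1}{2}\|\odestate_\tau\|$ for fixed $T$ and sufficiently large $\|\odestate_\tau\|$. For this, introduce the rescaled process $y^r_t \eqdef r^{-1}\odestate_{\tau+t}$ with $r=\|\odestate_\tau\|$; it satisfies $\tfrac{d}{dt}y^r_t = \barf^r(y^r_t)$ starting from a unit vector, and since $\barf^r\to \barf_\infty$ uniformly on compacts, Gr\"onwall yields $y^r_t\to \odestate^\infty_t(\odestate_\tau/r)$ uniformly on compact time intervals. Choosing $T$ with $Be^{-\delta T}\le \tfrac{1}{4}$ and then $\delta_1^{-1}$ large enough that the approximation error is at most $\tfrac{1}{4}$ gives $\|\odestate_{\tau+T}\|\le \tfrac{1}{2}\|\odestate_\tau\|$, which combined with the sandwich $c_-\|\theta\|\le V(\theta)\le c_+\|\theta\|$ delivers \eqref{e:V4T}.

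The main obstacle is in the second part: the approximation $y^r_t\approx \odestate^\infty_t(\odestate_\tau/r)$ must be controlled uniformly in the initial direction $\odestate_\tau/r$, because $\delta_1^{-1}$ has to be chosen once and for all, independent of this direction. The way through is compactness of the unit sphere combined with joint continuous dependence of ODE solutions on both vector field and initial condition: one first verifies the uniform limit $\barf^r\to\barf_\infty$ on compacts, then invokes a Gr\"onwall estimate that depends on the direction only through a continuous, hence uniformly bounded, quantity on the sphere. Everything else is bookkeeping and a final rescaling of $V$ so that the lower bound becomes $V(\theta)\ge \|\theta\|$ exactly, as demanded by \eqref{e:VLip}.
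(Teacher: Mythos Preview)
Your proof is correct and broadly parallel to the paper's, but the details differ in instructive ways.

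For the first implication, you are actually more careful than the paper: the paper simply integrates \eqref{e:ddt_bound_Lyapfun} over $[0,T]$ and appeals to \eqref{e:VLip}, glossing over the possibility that the trajectory leaves the region $\{\|\theta\|\ge\delta_0^{-1}\}$ before time $T$. Your two-case split handles this cleanly.

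For the second implication, both you and the paper use the candidate \eqref{e:BMV}, and both reduce to an approximation of the true flow by the ODE@$\infty$ flow for large initial data. The difference is in how the bookkeeping is organized. The paper first shows the \emph{$V$-contraction} $V(\odestate_T^\infty)\le\tfrac12 V(\odestate_0^\infty)$ directly from the semigroup property of $\bfodestate^\infty$ (since $V(\odestate_T^\infty)=\int_0^T\|\odestate_{t+T}^\infty\|\,dt$ and each integrand is at most half its counterpart in $V(\odestate_0^\infty)$), and then transfers to the true flow via the Lipschitz bound $|V(\odestate_T)-V(\odestate_T^\infty)|\le L_V\|\odestate_T-\odestate_T^\infty\|=L_V\,\epsy_0(\|\theta\|)\|\theta\|$. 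You instead establish a \emph{norm contraction} $\|\odestate_{\tau+T}\|\le\tfrac12\|\odestate_\tau\|$ for the true flow and then invoke the sandwich $c_-\|\theta\|\le V(\theta)\le c_+\|\theta\|$.

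Your route is fine, but watch one loose end: the sandwich only yields $V(\odestate_{\tau+T})-V(\odestate_\tau)\le(\tfrac12 c_+-c_-)\|\odestate_\tau\|$, which need not be negative unless $c_+/c_-<2$. You must therefore choose the contraction factor strictly below $c_-/c_+$ rather than $\tfrac12$; this is achievable by taking the horizon $T$ in \eqref{e:V4T} larger (and the threshold $\delta_1^{-1}$ correspondingly larger), since $Be^{-\delta T}\to 0$. The paper's $V$-level argument sidesteps this adjustment entirely, which is its main advantage.
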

 
 \begin{proof}
 	The implication (V4) $\Rightarrow$ (V4') follows from integration:   for each $T$,  subject to \eqref{e:ddt_bound_Lyapfun},
 	\[
 	V(\odestate_T) - 
 	V(\odestate_0) \le    -  \delta_0 \int_0^T  V(\odestate_{\tau}) \, d\tau
 	\]
 	This combined with the bounds in \eqref{e:VLip} imply that (V4') also holds,  with $T>0$ sufficiently large.
 	
 	\smallskip

 	We next establish (V4') subject to the assumption that the ODE@$\infty$ is asymptotically stable.   For this we  establish that the function $V$ defined in \eqref{e:BMV} is a Lipschitz continuous solution to \eqref{e:V4T}. 
 	
 	We have already remarked that Lipschitz continuity follows from (QSA2), which requires  Lipschitz continuity of $\barf$.
 	
 	For the remainder of the proof, it is enough to establish that this function solves (V4') for the ODE@$\infty$, by  applying the following bound from \cite{bormey00a}:
 	There is a function $\epsy_0\colon\Re_+\to\Re_+$ for which 
 	$ \epsy_0(r)\to 0$ as $r\to\infty$, and 
 	the following bound holds:
 	with $\theta = \odestate^\infty_0 = \odestate_0$,  and 
 	\[
 	\| \odestate^\infty_T  - \odestate_T  \|    =  \epsy_0(\| \theta\| )   \| \theta\|  
 	\]
 	This is a simple consequence of the representation \eqref{e:ODEinf-r}.

 	To obtain the inequality in \eqref{e:BMV} for $\bfodestate^\infty$ requires that we take $T$ sufficiently large:
 	Choose $T>0$ so that  $\|\odestate_t^\infty \|  \le \half  \|\odestate_t^\infty \|$ for $t\ge T$  (existence follows from Lemma 4.23 of \cite{CSRL}).    
 	We have for any  initial condition $\odestate_0$, 
 	\[
 	V(\odestate^\infty_T)   =
 	\int_0^{T}     \| \odestate_{t+T} ^\infty \| \,dt
 	\le \half 	V(\odestate^\infty_0) 
 	\]
 	It follows that this function $V$ solves (V4') for the ODE@$\infty$.    
 \end{proof}

 \begin{lemma}
 	\label[lemma]{t:littleV4bdd}
 	Suppose that  (A1) and the Lyapunov bound   (V4') hold.   Then, the following semi-exponential stability bound holds for the mean flow:
 	for positive constants $\delta_1$, $b$ and $c$,  
 	\begin{equation}
 		\| \tilodestate_t \|   \le  b \| \tilodestate_0 \|   e^{-\delta_1 t}  \,,
 		\quad
 		\textit{for $ t \le \tau_c  $,  where $\tau_c= \min\{ t : \| \tilodestate_t \|    \le c \}$  
 			and $\tilodestate_t=\odestate_t -\theta^\ocp$.  }
 		\label{e:littleV4bdd}
 	\end{equation}
 \end{lemma}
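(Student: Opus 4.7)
The plan is to convert the discrete-time drift inequality \eqref{e:V4T} into continuous-time exponential decay in three steps: iterate at sample times to obtain geometric contraction of $V$; translate this into discrete exponential decay of $\|\tilodestate_{nT}\|$ via the Lipschitz bounds in \eqref{e:VLip}; and interpolate between sample times using the Lipschitz bound on $\barf$.

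\textit{Step 1: Geometric contraction of $V$.} From the Lipschitz upper bound $V(\theta) \le V(0) + L_V\|\theta\|$ implied by \eqref{e:VLip}, I would substitute into \eqref{e:V4T} to obtain, for all $\tau$ with $\|\odestate_\tau\| > \delta_1^{-1}$,
\[
V(\odestate_{\tau+T}) \le V(\odestate_\tau) - \tfrac{\delta_1}{L_V}\bigl(V(\odestate_\tau) - V(0)\bigr) = \beta V(\odestate_\tau) + (1-\beta) V(0),
\]
with $\beta \eqdef \max\bigl(0,\,1 - \delta_1/L_V\bigr) \in [0,1)$ (truncating at zero since $V\ge 0$ already yields a trivial bound when $\delta_1 \ge L_V$). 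Iterating gives $V(\odestate_{nT}) \le \beta^n V(\odestate_0) + V(0)/(1-\beta)$ for each $n\ge 0$, valid so long as each $\odestate_{jT}$ with $0\le j<n$ remains outside the ball $\{\|\theta\|\le \delta_1^{-1}\}$.

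\textit{Step 2: Transfer to $\|\tilodestate\|$.} The lower bound in \eqref{e:VLip} gives $V(\theta)\ge\|\theta\|$ whenever $\|\theta\|\ge \delta_0^{-1}$. I would choose the threshold $c$, depending only on $\delta_0,\delta_1,L_V,\|\theta^\ocp\|,V(0)$, large enough that whenever $\|\tilodestate_t\|\ge c$ three conditions hold: $\|\odestate_t\|>\delta_1^{-1}$ (so Step 1 applies at every sample time in $[0,\tau_c]$), $\|\odestate_t\|\ge \delta_0^{-1}$ (so $V$ dominates the norm), and $\beta^n V(\odestate_0) \ge V(0)/(1-\beta)$ so the transient term in Step 1 is dominated by the geometric part. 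With these choices, Step 1 yields $\|\odestate_{nT}\| \le V(\odestate_{nT}) \le 2\beta^n V(\odestate_0) \le 2\beta^n(V(0) + L_V\|\odestate_0\|)$ for $nT\le \tau_c$; after applying the triangle inequality $\|\tilodestate_{nT}\|\le \|\odestate_{nT}\|+\|\theta^\ocp\|$ and absorbing the additive constants into a possibly enlarged $c$, this gives $\|\tilodestate_{nT}\| \le C_1\beta^n\|\tilodestate_0\|$ for all sample times $nT \le \tau_c$.

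\textit{Step 3: Interpolate.} Since $\barf(\theta^\ocp)=0$ under (A4), the Lipschitz bound \eqref{e:fLip} gives $\|\barf(\odestate_t)\|\le L_f\|\tilodestate_t\|$, hence $\tfrac{d^+}{dt}\|\tilodestate_t\| \le L_f\|\tilodestate_t\|$. Gronwall yields $\|\tilodestate_t\|\le e^{L_f T}\|\tilodestate_{nT}\|$ for $t\in[nT,(n+1)T]$. Combining with Step 2 and taking $n=\lfloor t/T\rfloor$, I obtain $\|\tilodestate_t\| \le b\,\|\tilodestate_0\|\,e^{-\delta_1 t}$ for $t\le\tau_c$, with $\delta_1 = -(\log\beta)/T>0$ and $b = C_1 e^{L_f T}/\beta$, where I overload the symbol $\delta_1$ as permitted by the lemma statement.

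The main obstacle is the bookkeeping in the choice of $c$: it must simultaneously keep the drift inequality applicable throughout $[0,\tau_c]$, keep $V$ comparable to the norm, and make the residue term $V(0)/(1-\beta)$ in Step 1 negligible against the geometric term---all while being independent of the initial condition. The minor mismatch between $\|\odestate\|$ in (V4') and $\|\tilodestate\|$ in the conclusion is absorbed by the triangle inequality once $c>\|\theta^\ocp\|$; once $c$ is fixed in terms of the assumed constants, the remaining estimates reduce to routine Gronwall and telescoping.
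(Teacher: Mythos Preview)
Your proof is correct and follows essentially the same approach as the paper: iterate the drift bound \eqref{e:V4T} at sample times $nT$ to obtain geometric decay of $V$, then interpolate to continuous time. Your argument is in fact more carefully spelled out---the paper absorbs the residue $V(0)/(1-\beta)$ and the mismatch between $\|\odestate\|$ and $\|\tilodestate\|$ directly into the choice of $c$ in one line, and interpolates by citing ``Lipschitz continuity of $V$'' rather than your explicit Gronwall step on $\|\tilodestate\|$---but the skeleton is identical.
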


 \begin{proof}
 	%\rd{ }
 	
 	We can choose $\tau =0 $ in \eqref{e:V4T} without loss of generality since the system is autonomous. For $\delta_1,\beta>0$, \eqref{e:V4T} gives
 	\[
 	V(\odestate_{T}) \leq  V(\odestate_{0})  - \delta_1 \|\odestate_0\| \leq e^{ - \beta} V(\odestate_0) \quad \text{if } \|\odestate_\tau\| > c
 	\] 
 	The condition above can be extended to general $t$ by first defining $t \eqdef nT $. 
 	Then,
 	\[
 	V(\odestate_{t}) \leq e^{ - \beta_1 nT} V(\odestate_0), \quad \beta_1 = \beta/T  \quad \text{if } t = nT \leq \tau_c
 	\]
 	This establishes \eqref{e:littleV4bdd} at these discrete time points.   It extends to arbitrary $t\le \tau_c$ via Lipschitz continuity of $V$.   
 \end{proof}

 \begin{proof}[Proof of \Cref{t:qsaprobeExpAS}]
 	The proof is obtained by interpreting 	\Cref{fig:qsaprobeExpAS}.   The time $T_1$ is  the first $t\ge 0$ such that $\odestate_t\in B_1$,  and $T_2$   the first $t\ge T_1$ such that $\odestate_t\in B_2$.   The set $B_1$ is defined in the figure, and $B_2$ is a region of exponential asymptotic stability.

 	\begin{figure}[h]
 		\centering
 		
 		\includegraphics[width=.35\hsize]{./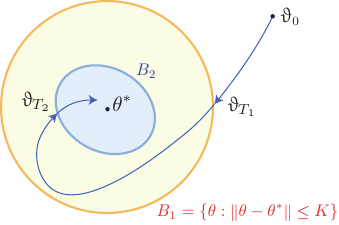}
 		\caption{Roadmap for the proof of exponential asymptotic stability for $	\ddt \odestate_t  = \barf  ( \odestate_t  )
 			$. }%
 		\label{fig:qsaprobeExpAS}
 	\end{figure} 
 	
 	\begin{subequations}
 		\Cref{t:littleV4bdd} tells us that,
 		under (V4'),  the constant $K$ defining
 		$B_1$ can be chosen sufficiently large so that there  are positive constants
 		$b_1$ and $\delta_1$ such that the following bound holds for $\odestate_0\in B_1^c$:    
 		\begin{equation}
 			\| \odestate_{T_1} -  \theta^\ocp \|   \le  b_1 \| \odestate_0-  \theta^\ocp \|    e^{-\delta_1 T_1}
 			\label{e:EAS1}
 		\end{equation}
 		The set $B_1$ is bounded,  so that $T_2 - T_1$ is uniformly bounded over all initial conditions for $\odestate_0$---this is where we apply globally asymptotically stability from (A4).   Consequently, the constant $b_1$ can be increased if necessary to obtain
 		\begin{equation}
 			\| \odestate_{T_2} -  \theta^\ocp \|   \le  b_1 \|  \odestate_{T_1} -  \theta^\ocp \|     
 			\label{e:EAS2}
 		\end{equation}
 		Finally we specify $B_2$:  it is chosen so that it is absorbing, that is $\odestate_t\in B_2$ for all $t\ge T_2$,   and for some positive constants
 		$b_2$ and $\delta_2$,
 		\begin{equation}
 			\| \odestate_t -  \theta^\ocp \|   \le  b_2 \| \odestate_{T_2} -  \theta^\ocp \|    e^{-\delta_2 (t- T_2) } \,,\qquad t\ge T_2.
 			\label{e:EAS3}
 		\end{equation}
 		The proof is completed on combining the three bounds \eqref{e:EASproof}, with $\delta = \min(\delta_1,\delta_2)$.
 		\label{e:EASproof}
 	\end{subequations}
 \end{proof}

 Recall $Y_t \eqdef\ODEstate_t  + \alpha \haf_t$  was introduced in \Cref{t:PMF}.

 \begin{lemma}
 	\label[lemma]{t:Y_Oalpha}
 	Under the assumptions of \Cref{t:Couple_theta}, 
 	there is $\alpha^0>0$ and $k_y<\infty $ such that for each $\alpha \in (0, \alpha^0]$,
 	\[
 	\limsup_{t\to\infty }  \| Y_t - \theta^* \|  <  k_y \alpha  \, .
 	\]
 \end{lemma}
 
 \begin{proof}
 	We assume without loss of generality that $\|\ODEstate_t \| \le \bigb  $ for all $t$;  in view of  \Cref{t:ODEstate_is_bdd}~(ii) the bound $\bigb $ may be chosen independent of $0<\alpha < \alpha^0$.  
 	
 	\Cref{t:qsaprobeExpAS} implies that there is a function  $V\colon\Re^d \to\Re_+$ with Lipschitz gradient satisfying, for   constants $\delta,\delta_0>0$,
 	\begin{equation}
 		\nabla V(x) \cdot \barf(x) \le -\delta_0 \|x - \theta^*\|^2  \le  -\delta  V(x) \,, \qquad x\in\Re^d \,.
 		\label{e:Lyapquad}
 	\end{equation}  
 	The chain rule combined with \eqref{e:prematureMeanflow} then gives, 
 	\[
 	\ddt V(Y_t)  =  \nabla V\, (Y_t) \cdot \ddt Y_t  =   \nabla V\, (Y_t) \cdot \bigl[ 
 	\alpha\bigl[   \barf( Y_t    )   + \alpha  \clB_t \bigr] \bigr]  \,, \qquad   \clB_t = -B_t  \haf_t  +  \Upupsilon_t  
 	\]  
 	Under the assumption that $\|\ODEstate_t \| \le \bigb  $ for all $t$,   we conclude that $ \| \clB_t \| $ is also uniformly bounded, independent of $t$ and $0\le \alpha\le \alpha^0$.   Consequently,  on applying the bound  \eqref{e:Lyapquad},   we conclude that there is a constant $b_v$ such that
 	\[
 	\ddt V(Y_t)   
 	\le
 	- \alpha  \delta  V( Y_t ) + \alpha^2 \nabla V\, (Y_t) \cdot  \clB_t
 	\le
 	- \alpha   \delta  V( Y_t )   + \alpha^2 b_v\,, \qquad t\ge 0 \, .
 	\] 
 	An application of the Bellman-Gr{\"o}nwall inequality completes the proof.
 \end{proof}
 
 \clearpage

\subsection{Filtering footnotes}
\label{s:FiltFoot}
This section contains results leading to the proof of \Cref{t:Couple_theta}. Its proof is postponed to the end of \Cref{s:lyap}.

%We begin 
Consider a stable filter with transfer function denoted $H$ and impulse response $h$.

Let $g$ be any differentiable function with Laplace transform $G$, $g^{(m)}$ be its $m$th derivative, and denote
\[
\hag^{(m)}_t = \int_0^t  h_{t-\tau} g^{(m)}_\tau\, d\tau\,,
\]
omitting the $m$ in the special case $m=0$.  
Let $\| g\|_1$, $\| g\|_\infty$ denote the respective $L_1$ and $L_\infty$-norms of the function $g\colon\Re_+\to \Co$.

Bounds on $\hag^{(i)}$ are obtained in the following,  which are uniform for vanishing $\alpha$ for the choice of filter \eqref{e:GVSOS}.

\begin{proposition}
	\label[proposition]{t:FilterDesign}
	Consider the stable second order transfer function 
	\[
	H(s) =   H^1(s) H^2(s) \,,\quad 
	H^i(s) = \frac{\gamma_i }{ s+\gamma_i}\, ,  \qquad i=1,2,       
	\] 
	with  $\text{Re}(\gamma_i)<0$ for each $i$.  Then,
	\begin{equation}
		\begin{aligned}
			| \hag^{(1)}_t|  & \le | \gamma_1|  \bigl( \| h^2 \|_1  +\| h_2 \|_1 \bigr)   \|g\|_\infty  + o(1)  
			\\
			| \hag^{(2)}_t|   &\le | \gamma_1 \gamma_2|  \bigl( 1  + \| h^1 \|_1  +\| h^2 \|_1  +\| h_2 \|_1 \bigr)   \|g\|_\infty  + o(1)  
		\end{aligned} 
		\label{e:HattenDerivatives}
	\end{equation}
	For the special case \eqref{e:GVSOS} with $0<\zeta\le 1$ fixed,  
	\begin{align}
		\| h^2 \|_1 = \| h^1 \|_1 & \le  \frac{1}{\zeta}   \,,
		\qquad
		\| h\|_1   \le  \frac{1}{\zeta^2}
		\label{e:GVSOSgains}
	\end{align} 
\end{proposition}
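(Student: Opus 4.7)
The plan is to reduce both bounds to integration by parts applied to the convolution $\hag^{(m)}_t = (h * g^{(m)})_t$, shifting derivatives from $g$ onto $h$. Two structural facts about $H(s) = H^1(s) H^2(s)$ will drive everything: (a) since $H$ has relative degree two, the initial value theorem gives $h_0 = \lim_{s\to\infty} sH(s) = 0$, so the usual boundary term at $\tau=t$ vanishes; (b) by stability, $h_t \to 0$ as $t\to\infty$, so any boundary term of the form $h_t g_0$, $h^i_t g_0$, or $h_t g'_0$ is $o(1)$ uniformly in $\|g\|_\infty$.

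For the first inequality in \eqref{e:HattenDerivatives}, I would carry out a single integration by parts,
\[
\hag^{(1)}_t = \int_0^t h_{t-\tau} g'(\tau)\,d\tau = h_0 g_t - h_t g_0 + \int_0^t h'_{t-\tau} g_\tau\,d\tau,
\]
and then identify $h'_t$ via transfer-function algebra. Writing $sH^1(s) = \gamma_1 - \gamma_1 H^1(s)$ and multiplying by $H^2(s)$ gives $sH(s) = \gamma_1 H^2(s) - \gamma_1 H(s)$, so $h'_t = \gamma_1(h^2_t - h_t)$ for $t>0$. Applying $L^1$--$L^\infty$ Young to each resulting convolution yields
\[
|\hag^{(1)}_t| \le |\gamma_1|\bigl(\|h^2\|_1 + \|h\|_1\bigr)\|g\|_\infty + |h_t||g_0|,
\]
which is the claimed bound (reading the $\|h_2\|_1$ in the statement as $\|h\|_1$, the composite norm).

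For the second inequality I would iterate the same device symmetrically. Integration by parts in $g''$ and then again in $g'$ gives
\[
\hag^{(2)}_t = -h_t g'_0 + \int_0^t h'_{t-\tau} g'_\tau\,d\tau,
\qquad
\int_0^t h'_{t-\tau} g'_\tau\,d\tau = h'_0 g_t - h'_t g_0 + \int_0^t h''_{t-\tau} g_\tau\,d\tau.
\]
The key computation is to factor $h''$ as a combination of $h$, $h^1$, $h^2$ with the prefactor $\gamma_1\gamma_2$ out in front. The cleanest route is to use $h'_t = \gamma_1(h^2_t - h_t)$ together with the analogous identity $sH^2(s) = \gamma_2 - \gamma_2 H^2(s)$ applied to $(h * g')$; bookkeeping all three convolutions (with $h^1$, $h^2$, and $h$) and the value $h'_0 = \gamma_1\gamma_2$ yields
\[
\hag^{(2)}_t = \gamma_1\gamma_2\bigl[g_t - (h^1 \!*\! g)_t - (h^2 \!*\! g)_t + (h\!*\!g)_t\bigr] + (\text{boundary terms that are } o(1)),
\]
and Young's inequality on each term gives the bound \eqref{e:HattenDerivatives}. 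The only real nuisance is keeping track of the signs and the $o(1)$ boundary remainders, but no deeper idea is required.

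For \eqref{e:GVSOSgains}, direct integration of $|h^i_t| = \gamma e^{-\zeta\gamma t}$ (the modulus is real-valued even when $\gamma_1, \gamma_2$ are a complex conjugate pair of magnitude $\gamma$ and real part $\zeta\gamma$) gives $\|h^i\|_1 = 1/\zeta$; the composite bound $\|h\|_1 \le \|h^1\|_1 \|h^2\|_1 = 1/\zeta^2$ follows from Young's convolution inequality, avoiding the more delicate direct computation with $\sin$ in the underdamped case. I anticipate no serious obstacle beyond careful algebraic accounting in the second-derivative step.
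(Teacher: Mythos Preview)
Your proposal is correct and takes essentially the same route as the paper: both rest on the algebraic identities $sH(s)=\gamma_1[H^2(s)-H(s)]$ and $s^2H(s)=\gamma_1\gamma_2[1-H^1(s)-H^2(s)+H(s)]$, then apply $L^1$--$L^\infty$ bounds, and both handle \eqref{e:GVSOSgains} via $\|h^i\|_1\le|\gamma_i|/\text{Re}(\gamma_i)=1/\zeta$ together with submultiplicativity $\|h\|_1\le\|h^1\|_1\|h^2\|_1$. The only difference is presentational: the paper stays in the Laplace domain (so the initial-condition terms appear as $-H(s)\,\text{poly}_m(s)$ and are dismissed as $o(1)$ in one stroke), whereas you do the equivalent integration by parts in the time domain and track the boundary terms $h_t g_0$, $h'_t g_0$, $h_t g'_0$ explicitly.
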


%\rd{}

\begin{proof}
	For any $m$, the Laplace transform of $ \hag^{(m)}$ is    
	\[
	\clL( \hag^{(m)}) = 
	s^m H(s) G(s)  - H(s) \text{poly}_m(s)
	\]
	In particular, $ \text{poly}_1(s) =  g_0$ and  $ \text{poly}_2(s) = s g_0 + g'_0$.

	Consider a stable first order filter with transfer function $H^1(s) = \beta / (s+\beta)$, where $\text{Re}(\beta)>0$.
	If we pass the derivative of the function $g$ through this filter, then the Laplace transform of the output is  $s H^1(s) G(s)$, with   
	\[
	s H^1(s) =  \beta  [1-   H^1(s)  ]
	%s \frac
	\]
	From this we conclude that we obtain attenuation by $\beta$:   
	$
	\hag^{(1)}_t   =  \beta  (g_t - \hag_t )  + o(1)  
	$,
	where the $o(1)$ term is a constant times $e^{-\gamma t}$.  Letting $\|g\|_\infty = \sup_t |g_t|$  gives
	\[
	| \hag^{(1)}_t|   \le   |\beta|   (1+ \| h^1\|_1 ) \|g\|_\infty  + o(1)  
	\]  
	We have $\| h^1\|_1 = 1$ if $\beta$ is real.
	
	This easily leads to the desired bounds for $H = H^1 H^2$ using $\beta=\gamma_i$.    
	We have,
	\[
	\begin{aligned}
		s H(s) &=  {\gamma_1 } \bigl[  1-  H^1(s)   \bigr]    H^2(s)    =  {\gamma_1 } \bigl[    H^2(s)-  H(s)   \bigr]  
		\\
		s^2 H(s) &=  {\gamma_1\gamma_2} \bigl[  1-  H^1(s)   \bigr] \bigl[  1-   H^2(s)   \bigr] 
		={\gamma_1\gamma} \bigl[  1-  H^1(s)    -   H^2(s)  + H(s)  \bigr]  
	\end{aligned} 
	\] 
	which results in the bounds \eqref{e:HattenDerivatives}.

	We turn next to the bounds in \eqref{e:GVSOSgains} for the filter \eqref{e:GVSOS}. 
	The $L_1$ norm is the induced operator norm, with $h$ viewed as a linear operator on $L_\infty$. This implies it is sub-multiplicative.  In the notation above,
	\[
	\begin{aligned} 
		\| h \|_1 &=  \| h^1 *h^2 \|_1  \le \| h^1 \|_1 \| h^2 \|_1
		\\
		\textit{and}\quad
		\| h^1 \|_1 & \le {|\gamma_1|}  \int_0^\infty \exp(- \text{Re}\, (\gamma_1) t )\, dt  
		=   {|\gamma_1|}  \frac{1}{\text{Re}\, (\gamma_1)}  
	\end{aligned} 
	\]
	For these complex poles, $|\gamma_1| =\gamma$   and $ \text{Re}\, (\gamma_1)  =\zeta \gamma$, 
	which gives the desired bounds:
	\[
	\| h^2 \|_1=\| h^1 \|_1  \le  \frac{1}{\zeta}
	\qquad
	\| h \|_1   \le \| h^1 \|_1 \| h^2 \|_1  \le   \gamma^2 \frac{1}{\zeta^2 \gamma^2}  = \frac{1}{\zeta^2}
	\]
\end{proof}

The proof of \Cref{t:Couple_theta} requires a linear approximation of the QSA ODE.
\begin{subequations}		
	\begin{lemma}
		\label[lemma]{t:BiasVariance} 
		Suppose that $\theta^*\in\Re^d$ is the unique solution to $\barf(\theta)=\Zero$.  
		Suppose moreover that Assumptions (A1) and (A2) hold, and denote $\tiltheta = \theta-\theta^\ocp$.
		Then,
		\whamrm{(i)}   There is a function $\clE_A\colon\Re^d\to\Re^d$ satisfying
		\begin{equation}
			\barf(\theta) = A^\ocp \tiltheta  +   \clE_A(\theta)  \,,\qquad \theta\in\Re^d  \, .
			\label{e:MVTtarget}
		\end{equation}
		Consequently, if  $A^\ocp$ is invertible, 
		\begin{equation}
			\ODEstate_t  -\theta^\ocp  =     [A^\ocp ]^{-1}  \bigl[    \barf(\ODEstate_t)  -  \clE_A(\ODEstate_t)  \bigr]  \, .
			\label{e:biasTargetBiasVariancePre}
		\end{equation} 
		The error term  admits the quadratic bound $ \clE_A(\theta) \le L_A  \|\tiltheta\|^2$,  and is also Lipschitz continuous.

		\whamrm{(ii)}  Provided the target bias and variance are finite,   the bias defined in \eqref{e:biasdef}
		admits the bound
		\begin{equation}
			\bias_\ODEstate  \le  \|  [A^\ocp ]^{-1} \|_F \bigl[  \bias_\barftwo     + L_A \,   \sigma_\ODEstate^2 \bigr]  \, .    
			\label{e:biasTargetBiasVariance}
		\end{equation}
		where   the subscript $F$ indicates the Frobenius norm.
		%\end{romannum}
		\qed
	\end{lemma}
	
\end{subequations}

\begin{proposition}
	\label[proposition]{t:PMFlin}
	Under the assumptions of \Cref{t:Couple_theta}, we have that  for each initial condition $q_0 = (\theta, z)\in\bivstate$,
	\begin{equation}
		\ddt \ODEstate_t    =  \alpha A^*  \bigl[ \ODEstate_t -\theta^\ocp    -     \alpha   \barY^*
		+    \clW_t   \bigr] +O(\alpha^3)     \,,\qquad \textit{for all   $ t\ge \tau^\alpha_{\lilb}(q_0)$}
		\label{e:BigGlobalODElin}
	\end{equation}
	with $\lilb$ given in part (ii) of \Cref{t:ODEstate_is_bdd}.
	\qed
\end{proposition}

This linearization is useful only if  the matrix   $A^\ocp = \barA (\theta^\ocp)$ is Hurwitz, which is why this assumption appears in (A4).     Subject to this assumption,  the approximation \eqref{e:BigGlobalODElin} strongly suggests that we apply linear filtering techniques to reduce volatility.

\begin{lemma}
	\label[lemma]{t:filterproof}
	Suppose the assumption of \Cref{t:Couple_theta} hold. Suppose in addition that for each $\alpha$, filtered estimates of $\bfODEstate$ are obtained using \eqref{e:GVSOS},  in which
	the value of $\zeta \in (0,1)$ is fixed,  and $\gamma =\eta \alpha$ with $\eta>0$ also fixed. Then, the following attenuation is obtained for $\clW_t$ in \eqref{e:BigGlobalODE},
	\begin{equation}
		| \clW_t^{i \text{\tiny\sf F}} |   = O(\alpha^i) \| \clW^i\|_\infty + o(1)   =   O(\alpha^3) + o(1)\,,\qquad t\ge 0\,,  \ i=1,2
		\label{e:clWattenuation}
	\end{equation}
	where $\clW^{i \text{\tiny\sf F}}$ is the output of the filter when the input is $  \clW^i$,
	and $\| \clW^i\|_\infty $ denotes the $L_\infty $ norm.   
\end{lemma}
\begin{proof}
	The proof of \eqref{e:clWattenuation} follows from \Cref{t:FilterDesign} along with the linearization of the QSA ODE \eqref{e:BigGlobalODElin}.
\end{proof}

\clearpage

\subsection{Coupling and Lyapunov exponents}
\label{s:lyap}
\Cref{s:AppStability} provided results leading to parts (i) and (ii) of \Cref{t:ODEstate_is_bdd}.   To prove part (iii), we consider the scaling $\clS_t = \exp(r t ) \clS^0_t$, with $r>0$
so that $\{\clS_t\}$ solves the time varying linear system
\[
\ddt \clS_t  =   \alpha [Ir+ A(\ODEstate_t, \qsaprobe_t)] \clS_t \,,   \qquad   \clS_0 = I  \ (d\times d) \, .
\]
with $ A( \theta ,  \qsaprobe)$ defined in \eqref{e:barfDer}. 
The joint process $(\ODEstate_t ; \clS_t)$ can be regarded as the solution to a $2d$ QSA ODE, with $2d$-dimensional mean flow,
\begin{equation}
	\ddt \odestate_t = \alpha \barf(\odestate_t) \, , \quad 
	\ddt s_t  = \alpha [ Ir + \barA(\odestate_t) ] s_t
	\label{e:Sensitivityjoint}
\end{equation}

Stability of the first ODE is imposed by (A4). The second ODE is asymptotically stable, provided $r$ is chosen small enough so that there exists a common Lyapunov function for $\{ Ir + \barA(\odestate_t): t\geq 0\}$,
\begin{lemma}
	\label[lemma]{t:GstableLemma}  
	Under (A4), there exists $r>0$, $\epsy_\ell>0$  and a positive definite matrix $P$ such that 
	$[ Ir + \barA(\theta) ]   P   +  P  [ Ir + \barA(\theta) ]  ^\transpose \le -I $ whenever $  \| \theta - \theta^* \|  \le \epsy_\ell$.
	\qed
\end{lemma}

The value of \Cref{t:GstableLemma} is made clear in the next two lemmas.
The first is immediate from \Cref{t:Y_Oalpha}.

\begin{lemma}
	\label[lemma]{t:alpha_ball}
	Suppose the assumptions of \Cref{t:Couple_theta} hold. Then, there exists $\alpha^0>0$ so that there are $k < k^\circ$ such that for all $\alpha \in (0, \alpha^0]$, $\tau^\alpha_{\lilb} (q_0) < \infty$ for all $q_0 \in \bivstate$ with $b = k \alpha$,
	and $\|  \ODEstate_t - \theta^* \| \leq k^\circ \alpha $  for $ t \geq \tau^\alpha_{\lilb} (q_0)$.  
\end{lemma}

\Cref{t:alpha0_2d} follows from \Cref{t:GstableLemma}  combined with part (ii) of \Cref{t:ODEstate_is_bdd}.

\begin{lemma}
	\label[lemma]{t:alpha0_2d}
	Suppose the assumptions of \Cref{t:ODEstate_is_bdd} hold and that $r>0$ is chosen so that the conclusion of \Cref{t:GstableLemma} holds for some
	$\epsy_\ell>0$.
	
	Then,   there exists  $\alpha^0>0$   and constants $\lilb \le \bigb$ such that 
	for each $\alpha \in (0, \alpha^0]$,  $q_0 = (\ODEstate_0,\Phi_0) \in \bivstate$,  and $\clS_0 = I$ in \eqref{e:Sensitivityjoint},
	there is a finite time $\tau^\alpha_{\lilb}(q_0)$ that is continuous in $q_0$ such that
	\begin{equation}
		\| \ODEstate_t - \theta^* \| + \| \clS_t \|_F \leq \bigb \, , \quad t\geq \tau^\alpha_{\lilb}(q_0,I)
		\label{e:Sens_isbdd}
	\end{equation}
\end{lemma}

The bound \eqref{e:Sens_isbdd}  implies that the scaled process $\exp(rt)\| S^0_t\|_F $ is bounded.
The value of $\kappa_0$ used in the lemma and the bound  $\tau^\alpha_{\lilb}(q_0)$ are adopted in the following.

\begin{lemma}
	\label[lemma]{t:coupling_lemma}
	Under the assumptions of \Cref{t:alpha0_2d},  the following holds for   $\alpha \in (0, \alpha^0]$ and  $q_0 = (\ODEstate_0 , \Phi_0) \in \bivstate$,
	\[
	\begin{aligned}
		\| 	\flow_t(\ODEstate_0,\Phi_0) - \flow_t(\theta^*,\Phi_0)\|  
		\le 
		b^\circ  \|  \ODEstate_T  - \theta^*  \|  \exp( - r (t- T )) 
		\,, \qquad t\ge  T 
	\end{aligned}
	\]	
	with $T=T(q_0) = \max \{\tau^\alpha_{\lilb}(q_0)  ,   \tau^\alpha_{\lilb}(\theta^*,\Phi_0) \}$.
\end{lemma}

\begin{proof}
	We  assume without loss of generality that  $T(q_0) =0$.
	
	Let
	$\ODEstate_t^s = (1-s) \ODEstate_t  +  s \theta^*$ for all $t$,  and $\clS_t^{0,s}  = \partial_\theta \flow_t(\ODEstate_0^s,\Phi_0) $.
	%$\| \ODEstate_0^s \| + \| \clS_0 \|_F \leq  \bigb$ for $0\le s\le 1$.
	The mean value theorem  combined with \eqref{e:Sens_isbdd} yields, for each $\Phi_0 \in \prstate$,
	\[
	\begin{aligned}
		\| 	\flow_t(\ODEstate_0,\Phi_0) - \flow_t(\theta^*,\Phi_0)\|  
		= \Big\|   \int^1_0 \clS_t^{0,s}     [ \ODEstate_0  - \theta^* ]  \,ds   \Big\| 
		\le    b^\circ \exp( - r t ) \|  \ODEstate_0  - \theta^*  \| 
		%		\\
		%		&\le  b^\circ \exp( - r t ) \|  \ODEstate_0  - \theta^*  \|   %  \int_0^1 \|    \clS_t^{0,s}  \|_F  \, ds
	\end{aligned}
	\]
\end{proof}

In the discussion surrounding  \Cref{fig:CFP}  the notation  $ \flow^-_T(\theta,z) \eqdef \flow_T(\theta,\exp(-W T) z)$ was introduced to indicate the process initialized at time $\ODEstate_{-T} =\theta$.
\Cref{t:coupling_lemma} implies convergence:

\begin{lemma}
	\label[lemma]{t:conv_determ}
	Under the assumptions of \Cref{t:alpha0_2d},   
	$\displaystyle
	\lim_{t \to \infty} \flow^-_t(\theta,z) =\flow_\infty^-(z) $ for $   (\theta,z)  \in \bivstate$, 
	where  convergence is uniform on compact subsets of $\bivstate$, and the limiting function  $\flow_\infty^-: \prstate \to \Re^d $ is continuous.
	\qed
\end{lemma}

\begin{lemma}
	\label[lemma]{t:stat_pross}
	Under the assumptions of \Cref{t:alpha0_2d},    suppose  $\Phi_0 \sim \uppi$ is random.   Then
	$\{ \Phi_t^\infty = e^{Wt} \Phi_0 : -\infty < t <\infty   \}$ is a stationary stochastic process,   and the joint process $\{\Psi_t^\infty =  ( \ODEstate^\infty_t ,   \Phi_t^\infty )=  ( \flow_\infty^-(\Phi_t^\infty) ,   \Phi_t^\infty ): -\infty < t <\infty   \} $ is a stationary realization of the QSA ODE.
	\qed
\end{lemma}

\begin{proof}[Proof of \Cref{t:ODEstate_is_bdd}]
	Part (i) follows from time-scaling, while part (ii) is obtained from \eqref{e:ExpStableAlmost} and \Cref{t:Couple_theta_local}. Part (iii) follows from \Cref{t:coupling_lemma}.
\end{proof}

\begin{proof}[Proof of \Cref{t:varpi}]
	Part~(i) is immediate from \Cref{t:conv_determ,t:stat_pross}.
	An application of \cite[Prop. 6.4.2]{MT} establishes that $\bfPsi$ is an e-process.

	Part (ii) follows from taking expectations of both sides of \eqref{e:EffNoiseDecomp} with respect to $\upvarpi$.

	For (iii), we have that  $\barUpupsilon(\theta) \equiv 0 $ for each $\theta$ under (A0) through an application of \Cref{t:orthCor} with $\hah = \haA$ and $g=f$.
\end{proof}

\begin{proof}[Proof of \Cref{t:Couple_theta}]
	The bound \eqref{e:fixedGain1} follows from \Cref{t:Y_Oalpha}.
	The  linear systems approximation  in   \Cref{t:PMFlin} combined with the bounds in  \Cref{t:filterproof} yield the desired bound \eqref{e:fixedGain1PR}.
\end{proof}

\begin{proof}[Proof of \Cref{t:asympt_bias}]
		The bounds in \eqref{e:Cor_bias}  follow from \Cref{t:BiasVariance} along with   \eqref{e:fixedGain1} and \eqref{e:fixedGain1PR}.
\end{proof}

\clearpage

\subsection{Implications to ESC}
\label{s:ESCtheory}

\subsubsection{Stability}

Recall the following bound for a function $\Obj(\theta):\Re^d \to \Re$ with Lipschitz gradient:
 \begin{equation}
	\Obj(\theta) \leq \Obj(\theta^\prime) + [\theta - \theta^\prime]^\transpose \nabla \Obj(\theta^\prime) + \tfrac{1}{2}L \|\theta - \theta^\prime \|^2,\quad \theta^\prime, \theta \in \Re^d
	\label{e:Lip_Grad}
\end{equation}
where $L$ is the Lipschitz constant associated with $\nabla \Obj$.
This bound and a bit more allows us to construct a solution to (V4) for the qSGD ODEs:

  \begin{proposition}
\label[proposition]{t:Lip_implies_V4}
Suppose that $\Obj$ has a Lipschitz gradient, so that \eqref{e:Lip_Grad} holds.  Suppose moreover that   $\|  \nabla\Obj (\theta)\| \ge \delta\|\theta\| $  for some $\delta>0$ and all $\theta$ satisfying $\|\theta\|\ge \delta^{-1}$.   Suppose that the probing signal is chosen of the form \eqref{e:qSGD_probe} with $K=d$ and $\Sigmaqsa>0$. 
 Then,  $V(\theta) = \sqrt{\Obj(\theta) - \Obj^\opt }$ satisfies assumption (V4) for both  1qSGD and 2qSGD. 
  \end{proposition}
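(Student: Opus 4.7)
The plan is to verify each condition of (V4) for $V(\theta) = \sqrt{1+\Obj(\theta)-\Obj^\opt}$, a harmless modification of the stated $V$ needed so that $V$ takes values in $[1,\infty)$ as required by (V4). Propositions~\ref{t:ESCapproximation} and \ref{t:ESCapproximation2} express both mean vector fields in the form $\barf(\theta) = -\Sigmaqsa\nabla\Obj(\theta) + E(\theta)$ with $E(\theta) = O(\epsy^{2})$ depending smoothly on $\theta$, so this is the representation I would use throughout.

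The first step is the chain-rule computation
\[
\ddt V(\odestate_\tau) \;=\; -\frac{1}{2V(\odestate_\tau)}\nabla\Obj(\odestate_\tau)^{\transpose}\Sigmaqsa\nabla\Obj(\odestate_\tau) \;+\; \frac{1}{2V(\odestate_\tau)}\nabla\Obj(\odestate_\tau)^{\transpose}E(\odestate_\tau).
\]
Since $\Sigmaqsa$ is positive definite, the leading term is at most $-\tfrac{1}{2}\lambda_{\min}(\Sigmaqsa)\|\nabla\Obj(\odestate_\tau)\|^{2}/V(\odestate_\tau)$, and the assumed gradient lower bound gives $\|\nabla\Obj(\theta)\|^{2}\ge\delta^{2}\|\theta\|^{2}$ for $\|\theta\|\ge\delta^{-1}$. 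The crucial intermediate inequality is the Polyak--Lojasiewicz-type bound $\|\nabla\Obj(\theta)\|^{2}\ge \mu\,[\Obj(\theta)-\Obj^{\opt}]$ valid for all sufficiently large $\|\theta\|$; I would obtain this by combining the previous gradient lower bound with the descent-lemma consequence $\Obj(\theta)-\Obj^{\opt}\le\tfrac{L}{2}\|\theta-\theta^{\opt}\|^{2}$, which follows from \eqref{e:Lip_Grad} applied at $\theta'=\theta^{\opt}$ using $\nabla\Obj(\theta^{\opt})=0$, together with the elementary comparison $\|\theta\|\asymp\|\theta-\theta^{\opt}\|$ for large $\|\theta\|$. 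The resulting bound takes the form $\ddt V \le -\delta_{1}V + O(\epsy^{2})/V$ on the set $\{\|\odestate_\tau\|\ge\delta^{-1}\}$; the error term is absorbed into the leading term by choosing $\epsy^{0}$ sufficiently small, yielding \eqref{e:ddt_bound_Lyapfun} with some $\delta_{0}>0$.

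The Lipschitz clause of \eqref{e:VLip} is immediate from $\nabla V = \nabla\Obj/(2V)$ once one notes that $V\ge 1$ and $\nabla\Obj$ has linear growth as a consequence of the Lipschitz-gradient hypothesis. The principal obstacle in the proof is the linear-growth clause $V(\theta)\ge\|\theta\|$ for $\|\theta\|\ge\delta_{0}^{-1}$, which is equivalent to a quadratic lower bound $\Obj(\theta)-\Obj^{\opt}\ge c\|\theta\|^{2}$ and does not follow from the pointwise gradient lower bound alone. To handle it I would integrate along the gradient flow $\dot x_{t}=-\nabla\Obj(x_{t})$ with $x_{0}=\theta$: coercivity of $\Obj$ (implied by the gradient lower bound) together with uniqueness of $\theta^{\opt}$ among critical points forces $x_{t}\to\theta^{\opt}$ by LaSalle's invariance principle, giving
\[
\Obj(\theta)-\Obj^{\opt} \;=\; \int_{0}^{\infty}\|\nabla\Obj(x_{t})\|^{2}\,dt.
\]
A Gr\"onwall estimate applied to $\tfrac{d}{dt}\|x_{t}-\theta^{\opt}\|\le L\|x_{t}-\theta^{\opt}\|$ shows that for $\|\theta\|$ sufficiently large, $\|x_{t}\|\ge\tfrac{1}{2}\|\theta\|$ on a time window of length of order $1/L$, on which the integrand is at least $\tfrac{\delta^{2}}{4}\|\theta\|^{2}$. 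This produces $\Obj(\theta)-\Obj^{\opt}\ge c'\delta^{2}\|\theta\|^{2}$, and rescaling $V$ by a constant delivers the linear-growth condition. This last step is the delicate part of the argument because the hypotheses do not imply convexity, so the global structure of $\Obj$ must enter through the unique-critical-point assumption and the coercivity implied by the gradient lower bound.
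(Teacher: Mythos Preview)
The paper states this proposition without proof, so there is nothing to compare against directly.  Your overall strategy is correct, but there is one substantive gap in how you control the error term.

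You invoke Propositions~\ref{t:ESCapproximation} and~\ref{t:ESCapproximation2} to obtain $\barf(\theta)=-\Sigmaqsa\nabla\Obj(\theta)+E(\theta)$ with $E(\theta)=O(\epsy^{2})$.  Those propositions assume $\Obj$ is analytic---strictly stronger than the Lipschitz-gradient hypothesis here---and, more importantly, assert uniformity of the $O(\epsy^{p})$ bound only when $\theta$ is restricted to a compact set.  The drift condition~\eqref{e:ddt_bound_Lyapfun} must hold for all large $\|\theta\|$, so a compact-set bound does not suffice, and the Taylor route cannot be repaired without assumptions on higher derivatives.  The remedy is to bypass the Taylor expansion and use the exact integral representations~\eqref{e:1qSPSA_FTC}--\eqref{e:2qSPSA_FTC} directly: for 2qSGD, for instance, $\barf(\theta)=-\Expect\bigl[\qsaprobe\qsaprobe^{\transpose}\tfrac{1}{2\upepsilon}\int_{-\upepsilon}^{\upepsilon}\nabla\Obj(\theta+t\qsaprobe)\,dt\bigr]$, and the Lipschitz bound $\|\nabla\Obj(\theta+t\qsaprobe)-\nabla\Obj(\theta)\|\le L|t|\,\|\qsaprobe\|$ gives $\|E(\theta)\|\le C\,\upepsilon(\theta)$ with $C$ depending only on $L$ and the probing amplitude.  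For constant $\upepsilon\equiv\epsy$ this is uniformly $O(\epsy)$; for the state-dependent gains~\eqref{e:upepsy} it grows at most linearly in $\|\theta\|$, whence $\|E(\theta)\|/\|\nabla\Obj(\theta)\|=O(\epsy)$ by the assumed gradient lower bound---precisely what your absorption step requires.  The same computation handles 1qSGD once the mean of $\tfrac{1}{\upepsilon}\Obj(\theta)\qsaprobe$ is discarded.

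Two smaller points.  Your Lipschitz argument for $V$ is incomplete as stated: $V\ge1$ together with linear growth of $\nabla\Obj$ gives only linear growth of $\|\nabla V\|=\|\nabla\Obj\|/(2V)$, not boundedness; you also need the linear lower bound $V(\theta)\ge c\|\theta\|$ from your final step, so the order of the argument should be reversed.  In that final step, the Gr\"onwall inequality should read $\bigl|\tfrac{d}{dt}\|x_t-\theta^{\opt}\|\bigr|\le L\|x_t-\theta^{\opt}\|$, which yields the lower bound $\|x_t-\theta^{\opt}\|\ge e^{-Lt}\|\theta-\theta^{\opt}\|$; with this the finite-horizon integral $\Obj(\theta)-\Obj(x_T)=\int_0^{T}\|\nabla\Obj(x_t)\|^{2}\,dt$ already delivers the quadratic lower bound, so the LaSalle argument and convergence $x_t\to\theta^{\opt}$ are unnecessary.
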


\subsubsection{Vector field approximations}

To obtain bounds that are uniform in the scaling gain $\epsy>0$ it is useful to apply a Taylor series representation very different from what was used in \Cref{t:hah}.

 \begin{lemma}
\label[lemma]{t:ObjObsTaylor}
If the objective function $\Obj$ is analytic, then it and the normalized observation function $\clYn$ admit the representations,
\begin{equation}
\begin{aligned} 
\Obj(\theta + \upepsilon \qsaprobe) & =   \Obj(\theta) +  \sum_{| \beta|\ge 1}   c_\beta  	\Obj^{(\beta)} (\theta)  \upepsilon^{|\beta|}  \qsaprobe^{\beta}
\\
    \clYn( \theta,\qsaprobe)   & =  \frac{1}{\upepsilon}   \Obj(\theta) +  \qsaprobe^\transpose \nabla\, \Obj(\theta)  
    				+  \sum_{| \beta|\ge 2}   c_\beta  	\Obj^{(\beta)} (\theta)  \upepsilon^{|\beta|-1}  \qsaprobe^{\beta}
\end{aligned} 
\label{e:ObjObsTaylor}
\end{equation}
where in each case the sum   is restricted to $\beta\in\nat^d$.
\end{lemma}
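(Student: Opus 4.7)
The plan is to recognize this as a direct application of the multivariate Taylor expansion for analytic functions, with the two identities differing only in how one separates the low-order terms. No deep argument is required; the content is mostly bookkeeping.

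First I would invoke analyticity of $\Obj$ on $\Re^d$ to write the convergent multivariate Taylor series about $\theta$,
\[
\Obj(\theta + \upepsilon \qsaprobe) = \sum_{\beta \in \nat^d} \frac{1}{\beta!}\, \Obj^{(\beta)}(\theta)\, (\upepsilon \qsaprobe)^{\beta}
= \sum_{\beta \in \nat^d} \frac{1}{\beta!}\, \Obj^{(\beta)}(\theta)\, \upepsilon^{|\beta|}\, \qsaprobe^{\beta},
\]
where $\beta! = \beta_1!\cdots \beta_d!$ and I use the standard identification $(\upepsilon \qsaprobe)^{\beta} = \upepsilon^{|\beta|}\qsaprobe^{\beta}$. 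Setting $c_\beta = 1/\beta!$ and peeling off the single $\beta = 0$ term yields the first identity in \eqref{e:ObjObsTaylor} exactly. Convergence of the series (uniform on compact sets in $(\theta,\qsaprobe)$) is inherited from analyticity of $\Obj$, which is the only ingredient actually needed.

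For the second identity, recall the definition $\clYn(\theta,\qsaprobe) = \upepsilon^{-1}\Obj(\theta + \upepsilon \qsaprobe)$ from \eqref{e:normalizedObservations}. Dividing the first identity by $\upepsilon$ and then separating the contributions with $|\beta| \le 1$ from those with $|\beta|\ge 2$ produces
\[
\clYn(\theta,\qsaprobe) = \frac{1}{\upepsilon}\Obj(\theta) + \sum_{|\beta|=1} c_\beta\, \Obj^{(\beta)}(\theta)\, \qsaprobe^{\beta} + \sum_{|\beta|\ge 2} c_\beta\, \Obj^{(\beta)}(\theta)\, \upepsilon^{|\beta|-1}\, \qsaprobe^{\beta}.
\]
The middle sum collapses to $\qsaprobe^{\transpose}\nabla \Obj(\theta)$ because for the standard basis vectors $\beta = e_i$ one has $c_{e_i} = 1$, $\Obj^{(e_i)}(\theta) = \partial_i \Obj(\theta)$, and $\qsaprobe^{e_i} = \qsaprobe_i$. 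This reproduces the second identity in \eqref{e:ObjObsTaylor}.

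Since both identities are purely formal consequences of analyticity and the definition of $\clYn$, there is no real obstacle; the only thing to check carefully is the factor of $\upepsilon^{|\beta|-1}$ in the second expansion (coming from dividing $\upepsilon^{|\beta|}$ by $\upepsilon$), and the fact that the constants $c_\beta$ are positive and independent of $\theta$, $\upepsilon$, and $\qsaprobe$.
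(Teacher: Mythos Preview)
Your proposal is correct and is precisely the intended argument: the paper states this lemma without proof, treating it as an immediate consequence of the multivariate Taylor expansion and the definition of $\clYn$ in \eqref{e:normalizedObservations}. Your identification $c_\beta = 1/\beta!$ and the bookkeeping of the $|\beta|\le 1$ terms is exactly right.
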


For general QSA theory there is no loss of generality in taking $G$ analytic in the definition  \eqref{e:qsaprobeG}.     
In applications to ESC it is convenient to take $G$ linear, since in this case we obtain better control of the terms in  \eqref{e:ObjObsTaylor},  and hence better approximations for $\haf$ and its derivatives.   Before summarizing these results we require some notation.

Let $\haG$ denote the solution to Poisson's equation with forcing function $\qsaprobe_t = G(\Phi_t)$,   and $\haSigma$ the solution to Poisson's equation with forcing function $\qsaprobe_t \qsaprobe_t ^\transpose = G(\Phi_t)G(\Phi_t)^\transpose$.  
In the special case  \eqref{e:qSGD_probe}  we write    $\qsaprobe_t = V \qsaprobe_t^0$ using
\begin{equation}
	\qsaprobe _t^0  =  [  \cos (2\pi [\,  \omega_1 t  +  \phi_1 ] ) ;\,  \dots  ;\,    \cos (2\pi [\,  \omega_K t  +  \phi_K ] )  ]
\label{e:qSGD_probe0}
\end{equation}
 with $V$ the $m\times K$ matrix with columns equal to the $v^i$ appearing in  \eqref{e:qSGD_probe}.

\begin{subequations}

\begin{lemma}
\label[lemma]{t:FirstAndSecondOrderPoisson}
Zero mean solutions to Poisson's equation
 with respective forcing functions  $G(z)$,  $G(z)G(z)^\transpose$, 
may be expressed in the time domain using 
$\haqsaprobe_t = \haG  (\Phi_t) $ and   $\haSigma_t =\haSigma  (\Phi_t) $  with
\begin{align}
\!
\!
\!
\haqsaprobe_t  &=   V    \haqsaprobe^0_t   \,,
\qquad 
&&
  \haqsaprobe^{0i}_t  =  
   \frac{1}{2\pi \omega_i}  \sin (2\pi [\,  \omega_i t  +  \phi_i  ] ) \,,\qquad 1\le i\le d
\label{e:hapsaprobe}
\\
\haSigma_t  &=  V \haSigma^0_t  V^\transpose  \,,
&&
\haSigma^{0i,j}_t  =   \begin{cases}
				\phantom{+}\frac{1}{4\pi} \frac{1}{\omega_i +\omega_j}    \sin (2\pi [\,  (\omega_i + \omega_j)t  +  \phi_i +\phi_j   ] )  &  
\\  
				+\frac{1}{4\pi} \frac{1}{\omega_i -\omega_j}    \sin (2\pi [\,  (\omega_i - \omega_j)t  +  \phi_i -\phi_j    ] )  &   i\neq j
				\\[.5em]
				\phantom{+}
				\frac{1}{8\pi} \frac{1}{\omega_i }    \sin (4\pi [\,  \omega_i t  +  \phi_i   ] )    & i=j
				\end{cases}
\label{e:haSigmaqsaprobe}
\end{align}
\end{lemma}
\end{subequations}

\begin{proof}
The representation \eqref{e:hapsaprobe} is immediate from  $\qsaprobe_t = V \qsaprobe_t^0$ and the definition \eqref{e:PoissonDefn}.

The trigonometric identity $ \cos(2x  )  =  2  \cos( x)  ^2 -1$ and $\Sigmaqsa =  \half VV^\transpose $ gives   
\[
\begin{aligned} 
\qsaprobe_t \qsaprobe_t ^\transpose  	  -   \Sigmaqsa
	& = V   \diag  [  \cos^2 (2\pi [\,  \omega_1 t  +  \phi_1 ] ) \,, \dots \,,   \cos^2 (2\pi [\,  \omega_K t  +  \phi_K ] )  ]   V^\transpose 	  -   \Sigmaqsa
	\\
	& = \half V  \diag [  \cos (2\pi [\,  2\omega_1 t  +  2\phi_1 ] ) \,, \dots \,,   \cos (2\pi [\,  2\omega_K t  +  2\phi_K ] )  ]   V^\transpose 
\end{aligned} 
\]
The representation \eqref{e:haSigmaqsaprobe} follows from integration of each side, and applying the definition \eqref{e:PoissonDefn}.
\end{proof}

\Cref{t:ObjObsTaylor} is presented with $\upepsilon $ an independent variable.  It remains valid and useful when we substitute $\upepsilon = \upepsilon(\theta)$:

\begin{subequations} 
\begin{proposition}
\label[proposition]{t:ESCapproximation}   
If $\Obj$ is analytic,
	and  the frequencies $\{ \omega_i \}$ satisfy \eqref{e:logFreq},
 then we have the following approximations for 1qSGD when $\upepsilon$ is   a function of $\theta$:
\begin{align}
f( \theta,\qsaprobe)   & = - \frac{1}{\upepsilon(\theta)}   \Obj(\theta) \qsaprobe
			- \qsaprobe \qsaprobe^\transpose \nabla\, \Obj(\theta)   +  O(\epsy)   
&&   
\barf( \theta)   = - \Sigmaqsa\nabla\, \Obj(\theta)   +  O(\epsy^2)   
\label{e:ESCapprox_f}
   \\
A( \theta,\qsaprobe)   & =  
 -  \partial_\theta\Bigl( \frac{1}{\upepsilon(\theta)}   \Obj(\theta) \Bigr) \qsaprobe
 - \qsaprobe \qsaprobe^\transpose \nabla^2\, \Obj(\theta)   +  O(\epsy)  
&&
\barA( \theta)    =   - \Sigmaqsa \nabla^2\, \Obj(\theta)   +  O(\epsy^2)   
  \label{e:ESCapprox_A} 
  \\
 \haf( \theta, \Phi)   & = - \frac{1}{\upepsilon(\theta)}   \Obj(\theta) \haG(\Phi)
			- \haSigma(\Phi) \nabla\, \Obj(\theta)   +  O(\epsy)   
&&   
\label{e:ESCapprox_haf} 
\end{align} 
where in each case the ratio $O(\epsy^p) /\epsy^p $ is uniformly bounded over $\epsy>0$, when $\theta$ is restricted to any compact subset of $\Re^d$.    
\end{proposition}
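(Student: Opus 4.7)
The plan is to substitute the Taylor expansion of $\clYn$ from \Cref{t:ObjObsTaylor} into the identity $f(\theta,\qsaprobe) = -\qsaprobe\,\clYn(\theta,\qsaprobe)$ implied by \eqref{e:1qSGD} with state-dependent gain, and then read off the asserted approximations termwise. The Poisson-equation formula is then obtained by exploiting linearity and invoking \Cref{t:FirstAndSecondOrderPoisson} along with the analytic Poisson-solution theory of \Cref{t:hah}.

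For \eqref{e:ESCapprox_f}, multiplying the second expansion in \eqref{e:ObjObsTaylor} by $-\qsaprobe$ yields exactly the leading pair $-\tfrac{1}{\upepsilon(\theta)}\Obj(\theta)\qsaprobe - \qsaprobe\qsaprobe^\transpose\nabla\Obj(\theta)$ together with a remainder
\[
R(\theta,\qsaprobe) \;=\; -\qsaprobe\sum_{|\beta|\ge 2} c_\beta\,\Obj^{(\beta)}(\theta)\,\upepsilon(\theta)^{|\beta|-1}\qsaprobe^\beta .
\]
Since $\upepsilon(\theta) = O(\epsy)$ uniformly on compacta and $\qsaprobe$ is bounded, the factor $\upepsilon(\theta)^{|\beta|-1}\ge \upepsilon(\theta)$ forces $R = O(\epsy)$, and analyticity of $\Obj$ guarantees absolute summability of the series. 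Taking expectations and using $\Expect[\qsaprobe]=\Zero$ together with $\Expect[\qsaprobe\qsaprobe^\transpose] = \Sigmaqsa$ kills the first term and converts the second into $-\Sigmaqsa\nabla\Obj(\theta)$. The $|\beta|=2$ tail contributions are cubic monomials in the sinusoidal components of $\qsaprobe$, so their time-averages vanish under the distinct-frequency hypothesis embedded in (A0b); this is precisely the content of \Cref{t:orthCor} applied to odd-degree monomials. The first surviving tail contribution therefore arises at $|\beta|=3$, yielding the sharpened $O(\epsy^2)$ bound asserted for $\barf$. Differentiating these identities in $\theta$ delivers \eqref{e:ESCapprox_A}; termwise differentiation is justified by analyticity and uniform convergence on compacta, and the same odd-monomial argument promotes the bound on $\barA$ from $O(\epsy)$ to $O(\epsy^2)$.

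The most delicate step is \eqref{e:ESCapprox_haf}. Here I would exploit the linearity of Poisson's equation: after centering, the forcing $f - \barf$ decomposes into the two explicit pieces $-\tfrac{1}{\upepsilon(\theta)}\Obj(\theta)\qsaprobe$ and $-[\qsaprobe\qsaprobe^\transpose - \Sigmaqsa]\nabla\Obj(\theta)$ plus a centered residual $R(\theta,\cdot) - \Expect[R(\theta,\cdot)]$. By \Cref{t:FirstAndSecondOrderPoisson} the first two centered pieces admit the explicit Poisson solutions $-\tfrac{1}{\upepsilon(\theta)}\Obj(\theta)\haG(\Phi)$ and $-\haSigma(\Phi)\nabla\Obj(\theta)$, accounting for the two displayed terms in \eqref{e:ESCapprox_haf}. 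The main obstacle is then producing a zero-mean Poisson solution for the residual that is uniformly $O(\epsy)$ on compacta. This is where \Cref{t:hah} is essential: each monomial $\qsaprobe^{\beta}$ is a trigonometric polynomial whose analytic Poisson solution has Fourier coefficients equal to those of the forcing divided by $2\pi\omega_{\beta''}$ for the relevant summed frequencies. The Baker-type lower bound $|\omega_{\beta''}|\ge k_{\beta''}^{-C}$ inflates these coefficients only polynomially in $|\beta|$, while analyticity of $\Obj$ together with the absolute summability assumed in (A0a) forces $c_\beta\,\Obj^{(\beta)}(\theta)$ to decay super-polynomially on compact sets. The small prefactor $\upepsilon(\theta)^{|\beta|-1}$ is inherited by the Poisson solution, so summing the resulting bounds yields a tail of order $O(\epsy)$, uniform in $\theta$ on any compact subset of $\Re^d$, which completes the proof of \eqref{e:ESCapprox_haf}.
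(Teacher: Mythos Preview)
Your approach is correct and matches the paper's: expand via \Cref{t:ObjObsTaylor}, read off the leading terms, and build $\haf$ termwise using \Cref{t:FirstAndSecondOrderPoisson} for the two explicit pieces and \Cref{t:hah} for the tail.  Two small misattributions worth fixing: the vanishing of the cubic contributions in $\barf$ is not \Cref{t:orthCor} (which concerns orthogonality of $g(\qsaprobe)$ against $\hah(\Phi)$, a different statement) but simply the fact that $\Expect[\qsaprobe^\beta]=0$ for $|\beta|=3$ under the frequency assumptions---the paper states exactly this; and the decay of $c_\beta\Obj^{(\beta)}(\theta)$ on compacta comes from analyticity of $\Obj$ (Cauchy estimates), not from (A0a), which constrains the probing map $G_0$ rather than $\Obj$.
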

\end{subequations}

\begin{proof}
The expressions for $f$ and $A$ are immediate from 
\Cref{t:ObjObsTaylor},  and the expression for $\haf$ then follows from \Cref{t:hah} combined with  \Cref{t:FirstAndSecondOrderPoisson}.
The error is $O(\epsy^2)$ for the means $\barf$ and $\barA$ because $\Expect[ \qsaprobe^\beta] = 0$ when $|\beta| =3$.
\end{proof}

Expressions for the other terms appearing in \Cref{t:PMF} immediately follow.  We have
 \[
  \hahaf( \theta,\Phi)    = - \frac{1}{\upepsilon(\theta)}   \Obj(\theta) \hahaqsaprobe(\Phi)
			- \hahaSigma(\Phi) \nabla\, \Obj(\theta)   +  O(\epsy)   
\]
in which	the ``double hats'' are defined in analogy with $\hahaf$, and
\[
\Upupsilon ( \theta,\Phi) =  - [\Df \haf]( \theta,\Phi)   = -  
 \Bigl[  \partial_\theta\Bigl( \frac{1}{\upepsilon(\theta)}   \Obj(\theta) \Bigr)   \haG(\Phi)
 		+
			\haSigma(\Phi) \nabla^2\, \Obj(\theta)  \Bigr] \Bigl[ \frac{1}{\upepsilon(\theta)}   \Obj(\theta) \qsaprobe
			+
			 \qsaprobe \qsaprobe^\transpose \nabla\, \Obj(\theta)\Bigr]
											 +  O(\epsy)  
\]
While \Cref{t:orthCor} tells us that $\barUpupsilon    ( \theta)  \eqdef\Expect[\Upupsilon ( \theta,\Phi)  ]= 0$ for every $\theta$ under the assumptions of    \Cref{t:bigQSA+ESC},   the expression above  combined with
 \eqref{e:AllTheNoise0}   
and
 \eqref{e:AllTheNoise1}   
 suggest high volatility.

\subsubsection{Conclusions for 2qSGD}  

The conclusions above simplify greatly in this case:  
Letting $f_1$ denote the QSA vector field for 1qSGD,  and $f_2$   the QSA vector field for 2qSGD,  we have
\[
f_2(\theta,\qsaprobe)  = \half[ f_1(\theta,\qsaprobe) + f_1(\theta, -\qsaprobe) ]
\]
The following corollary to  \Cref{t:ESCapproximation} is immediate:

\begin{subequations} 
\begin{proposition}
\label[proposition]{t:ESCapproximation2}   
If $\Obj$ is analytic and  the frequencies $\{ \omega_i \}$ satisfy \eqref{e:logFreq}, 
we then have the following approximations for 2qSGD:  
\begin{align}
f( \theta,\qsaprobe)   & =  
			- \qsaprobe \qsaprobe^\transpose \nabla\, \Obj(\theta)   +  O(\epsy)   
&&   
\barf( \theta)   = - \Sigmaqsa\nabla\, \Obj(\theta)   +  O(\epsy^2)   
\label{e:ESCapprox_f2}
   \\
A( \theta,\qsaprobe)   & =   
 - \qsaprobe \qsaprobe^\transpose \nabla^2\, \Obj(\theta)   +  O(\epsy)  
&&
\barA( \theta)    =   - \Sigmaqsa \nabla^2\, \Obj(\theta)   +  O(\epsy^2)   
  \label{e:ESCapprox_A2} 
  \\
 \haf( \theta, \Phi)   & = 
			- \haSigma(\Phi) \nabla\, \Obj(\theta)   +  O(\epsy)   
&&     \hahaf( \theta,\Phi)    =  
			- \hahaSigma(\Phi) \nabla\, \Obj(\theta)   +  O(\epsy)   
\\
\Upupsilon ( \theta,\Phi) &=   -   
			\haSigma(\Phi) \nabla^2\, \Obj(\theta) \qsaprobe \qsaprobe^\transpose \nabla\, \Obj(\theta) 
											 +  O(\epsy)   &&
\label{e:ESCapprox_haf2} 
\end{align} 
where in each case the ratio $O(\epsy^p) /\epsy^p $ is uniformly bounded over $\epsy>0$, when $\theta$ is restricted to any compact subset of $\Re^d$.    
\end{proposition}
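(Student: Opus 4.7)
The plan is to obtain Proposition~\ref{t:ESCapproximation2} as a direct corollary of Proposition~\ref{t:ESCapproximation}, exploiting the symmetrization identity
\[
f_2(\theta,\qsaprobe) = \tfrac{1}{2}\bigl[f_1(\theta,\qsaprobe)+f_1(\theta,-\qsaprobe)\bigr]
\]
stated immediately above the proposition. First I would expand $f_1$ using Lemma~\ref{t:ObjObsTaylor}: writing $\clY^{\sf n}(\theta,\qsaprobe)=\frac{1}{\upepsilon}\Obj(\theta)+\qsaprobe^\transpose\nabla\Obj(\theta)+\sum_{|\beta|\ge 2}c_\beta\Obj^{(\beta)}(\theta)\upepsilon^{|\beta|-1}\qsaprobe^\beta$, one has $f_1(\theta,\qsaprobe)=-\qsaprobe\clY^{\sf n}(\theta,\qsaprobe)$. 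Substituting $\qsaprobe\mapsto-\qsaprobe$ and averaging makes every term of odd parity in $\qsaprobe$ drop out; crucially, the problematic singular term $-\frac{1}{\upepsilon}\Obj(\theta)\qsaprobe$ cancels, as does $\qsaprobe\cdot\qsaprobe^\beta$ whenever $|\beta|$ is even. This leaves
\[
f_2(\theta,\qsaprobe)=-\qsaprobe\qsaprobe^\transpose\nabla\Obj(\theta)-\qsaprobe\sum_{|\beta|\ge 3,\;|\beta|\text{ odd}}c_\beta\Obj^{(\beta)}(\theta)\upepsilon^{|\beta|-1}\qsaprobe^\beta,
\]
from which \eqref{e:ESCapprox_f2} is read off (in fact the remainder is $O(\upepsilon^2)$, so the stated $O(\upepsilon)$ bound is comfortable). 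Differentiating under the sum in $\theta$ (valid by analyticity) gives the companion expansion for $A(\theta,\qsaprobe)$ in \eqref{e:ESCapprox_A2}.

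Next I would take expectations to obtain $\barf$ and $\barA$. For this I would invoke the observation that, under the frequency condition \eqref{e:TwoFreqChoices}, $\Expect[\qsaprobe^\gamma]=0$ whenever $|\gamma|$ is odd: expanding $\prod_i\cos^{\gamma_i}(2\pi[\omega_it+\phi_i])$ by product-to-sum yields cosines with frequencies $\sum_i(\gamma_i-2k_i)\omega_i$, which are all nonzero (hence zero-mean) unless every $\gamma_i$ is even, forcing $|\gamma|$ even. Applying this to each surviving term in $f_2$ (which carries $\qsaprobe^{\beta+e_j}$ with $|\beta|+1$ even), the leading remainder gives $\barf(\theta)=-\Sigmaqsa\nabla\Obj(\theta)+O(\upepsilon^2)$ and similarly for $\barA$.

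For the solutions to Poisson's equation I would work on the centered forcing function $\tilde f_2 = f_2 - \barf = -[\qsaprobe\qsaprobe^\transpose-\Sigmaqsa]\nabla\Obj(\theta)+O(\upepsilon^2)$. The leading piece has Poisson solution $-\haSigma(\Phi)\nabla\Obj(\theta)$ by Lemma~\ref{t:FirstAndSecondOrderPoisson}. For the remainder, I would apply Theorem~\ref{t:hah} termwise to its Taylor expansion: because each coefficient in the remainder is multiplied by $\upepsilon^{|\beta|-1}$ with $|\beta|\ge 3$, the Poisson solution inherits the $O(\upepsilon)$ bound (indeed $O(\upepsilon^2)$) provided $\theta$ stays in a compact set. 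Iterating once more produces $\hahaf(\theta,\Phi)=-\hahaSigma(\Phi)\nabla\Obj(\theta)+O(\upepsilon)$ by the same argument applied to $\haf$. Finally, $\Upupsilon$ is obtained by direct substitution into its definition $\Upupsilon=-\partial_\theta\haf\cdot f$: one differentiates $\haf$ in $\theta$ to get $\partial_\theta\haf(\theta,\Phi)=-\haSigma(\Phi)\nabla^2\Obj(\theta)+O(\upepsilon)$, multiplies by $f(\theta,\qsaprobe)=-\qsaprobe\qsaprobe^\transpose\nabla\Obj(\theta)+O(\upepsilon)$, and the cross terms are $O(\upepsilon)$, yielding \eqref{e:ESCapprox_haf2}.

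The main obstacle is the bookkeeping required to make the Poisson-equation step quantitative: one must verify that the error term produced by applying Theorem~\ref{t:hah} to the analytic remainder is controlled uniformly over $\theta$ on compact sets, so that the qualifier ``$O(\upepsilon^p)/\upepsilon^p$ uniformly bounded over $\upepsilon>0$ on compact $\theta$-sets'' truly propagates from $f$ to $\haf$ and then to $\hahaf$. This requires using the absolute summability built into Assumption~(A0a), together with the Baker-type denominator bound $|\omega_\beta|\ge k_\beta^{-C}$ from Section~\ref{s:BakerPoisson}, to show that termwise inversion in Poisson's equation preserves the $\upepsilon$-scale of the remainder. Once this is in hand, each line of \eqref{e:ESCapprox_f2}--\eqref{e:ESCapprox_haf2} is a routine consequence of the symmetrization identity combined with Proposition~\ref{t:ESCapproximation}.
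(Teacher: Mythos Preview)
Your proposal is correct and follows essentially the same approach as the paper: both derive the result as a corollary of Proposition~\ref{t:ESCapproximation} via the symmetrization identity $f_2(\theta,\qsaprobe)=\tfrac12[f_1(\theta,\qsaprobe)+f_1(\theta,-\qsaprobe)]$. The paper treats this as immediate and gives no further detail, whereas you spell out the parity cancellation, the expectation step, and the propagation of remainders through the Poisson solutions; your flagged ``main obstacle'' about uniform control of the Poisson remainders is a genuine technical point, but one that the paper leaves equally implicit in its proof of Proposition~\ref{t:ESCapproximation}.
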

\end{subequations}

\clearpage

%\bibliographystyle{abbrv}
%\bibliography{strings,markov,q,QSA,../extras}%,ESCcaio} 
%\bibliography{strings,markov,q,QSA,extras,ESCcaio} 

\end{document}